\documentclass{amsart}
\usepackage{amsmath}
\usepackage{amsfonts}
\usepackage{graphicx}
\usepackage{latexsym, amscd, euscript}
\usepackage{color}
\usepackage{tikz}
\usepackage{enumitem}
\usepackage{mathrsfs}
\usepackage{hyperref}

\usepackage{amssymb}
\usepackage{epsfig}

\usepackage[utf8]{inputenc}
\usepackage[english]{babel}
 
\usepackage{amsthm}

\begin{document}

\newtheorem*{rep@theorem}{\rep@title}
\newcommand{\newreptheorem}[2]{%
\newenvironment{rep#1}[1]{%
 \def\rep@title{#2 \ref{##1}}%
 \begin{rep@theorem}}%
 {\end{rep@theorem}}}
\makeatother

\theoremstyle{plain}
 \newtheorem{lemma}{Lemma}[section]
\newtheorem{theorem}[lemma]
{Theorem }
\newreptheorem{theorem}{Theorem}
\newtheorem{corollary}[lemma]
{Corollary}
\newtheorem{proposition}[lemma]{Proposition}
\newtheorem*{thma}{Theorem 1.1'}

\theoremstyle{definition}
\newtheorem{definition}[lemma]{Definition }
\newtheorem{rmkdef}[lemma]{Remark-Definition}
\newtheorem{conjecture}[lemma]
{Conjecture }
\newtheorem{example}[lemma]
{Example }
\newtheorem{fact}[lemma]{Fact}
\newtheorem{remark}[lemma]{Remark}

\newcommand{\C}{{\mathbb C}}
\newcommand{\e}{\varepsilon}
\renewcommand{\l}{\lambda}
\newcommand{\T}{\mathcal T}
\newcommand{\In}{\operatorname{In}}
\newcommand{\Supp}{\operatorname{Supp}}
\newcommand{\s}{\sigma}
\renewcommand{\t}{\tau}
\newcommand{\V}{\mathcal V}
\newcommand{\NP}{\operatorname{NP}}
\renewcommand{\o}{\omega}
\newcommand{\Sc}{\mathscr{S}^\K_\preceq}
\newcommand{\Ker}{\text{Ker}}
\newcommand{\g}{\gamma}
\renewcommand{\O}{\mathcal{O}}
\renewcommand{\P}{\mathbb{P}}
\newcommand{\PP}{\mathcal P}
\renewcommand{\a}{\alpha}
\newcommand{\ovl}{\overline}
\newcommand{\G}{\mathcal{G}_\preceq}
\renewcommand{\(}{(\!(}
\renewcommand{\)}{)\!)}
\renewcommand{\b}{\beta}
\renewcommand{\int}{\operatorname{Int}}
\newcommand{\Int}{\operatorname{Int_{rel}}}
\newcommand{\ord}{\operatorname{ord}}
\newcommand{\K}{{\mathbb K}}
\newcommand{\Q}{{\mathbb Q}}
\newcommand{\R}{{\mathbb R}}
\newcommand{\Z}{{\mathbb Z}}
\newcommand{\sA}{{\mathcal A}}

\setlength\parindent{0pt}

\title[Support of  algebraic Laurent series]{Support of  Laurent series algebraic over the field of formal power series}

\author{Fuensanta Aroca}
\email{fuen@im.unam.mx}
\address{Instituto de Matem\'aticas, Universidad Nacional Aut\'onoma de M\'exico (UNAM), Mexico}

\author{Guillaume Rond}
\email{guillaume.rond@univ-amu.fr}
\address{Aix-Marseille Universit\'e, CNRS, Centrale Marseille, I2M, UMR 7373, 13453 Marseille, France}

\begin{abstract}
This work is devoted to the study of the support of a Laurent series in several variables which is algebraic over the ring of power series over a characteristic zero field. Our first result is the existence of a kind of maximal dual cone of the support of such a Laurent series. As an application of this result we provide a gap theorem for Laurent series which are algebraic over the field of formal power series. We also relate these results to  diophantine properties of the fields of Laurent series.\end{abstract}

\subjclass[2010]{06A05, 11J25, 11J61, 12J99, 13F25, 14G99}
\keywords{power series rings, support of a Laurent series, algebraic closure, non archimedean diophantine approximation}

\thanks{This work has been partially supported by ECOS Project M14M03. 
G. Rond was partially supported by ANR projects STAAVF (ANR-2011 BS01 009) and SUSI (ANR-12-JS01-0002-01)
F. Aroca was partially supported by PAPIIT IN108216, CONACYT 164447 and LAISLA}

\maketitle

\tableofcontents

\section{Introduction}
From the perspective of determining the algebraic closure of the field of power series in several variables this paper investigates conditions for a power series with support in a strongly convex cone to be algebraic over the ring of power series. We also develop the analogy between the classical Diophantine approximation theory and its counterpart for power series fields in several variables. Let us explain in more details the problem.\\

Let us denote by $\K(\!(x)\!)$ the field of fractions of the ring of formal power series in $n$ variables $x=(x_1,\ldots,x_n)$ with coefficients in a characteristic zero field $\K$. For simplicity we  assume that $\K$ is algebraically closed. This field $\K\(x\)$ is not algebraically closed. When $n=1$ it is well known that the algebraic closure of $\K\(x\)$ is the field of Puiseux series $\bigcup_{k\in\Z_{> 0}}\K\(x^{1/k}\)$. \\

When $n\geq 2$ there are several descriptions of algebraically closed fields containing $\K\(x\)$ \cite{McDonald:1995, Gonzalez:2000, ArocaIlardi:2009, SotoVicente}. The elements of these fields are Puiseux series whose support is included in a translated strongly convex rational cone containing ${\R_{\geq 0}}^n$. Here a rational cone means a polyhedral cone of $\R^n$ whose vertices are generated by integer coefficients vectors. More precisely, one of these descriptions is the following one:  for any given vector $\o\in{\R_{>0}}^n$ with $\Q$-linearly independent coordinates and  for every polynomial $P$ with coefficients in $\K\(x\)$  there exist a strongly convex cone $\s$ containing ${\R_{\geq 0}}^n$, such that $u\cdot\o> 0$ for every $u\in\s\backslash\{0\}$, a vector $\g\in\R^n$ and a Laurent Puiseux  series $\xi$ which is a root of $P$   such that
$$\Supp(\xi)\subset \g+\s.$$
Let us recall that for a Laurent power series $\xi=\sum_\a \xi_\a x^\a$ we define the support of $\xi$ as:
$$\Supp(\xi):=\{\a\in\Q^n\mid \xi_\a\neq 0\},$$
and a Laurent Puiseux series is a series $\xi$ whose support is in $\frac{1}{k}\Z^n$ for some integer $k\in\Z_{> 0}$.\\
For instance if $P(T)=T^2-(x_1+x_2)$ then the roots of $P$ are
$$\pm x_1^{\frac{1}{2}}\left(1+a_1\frac{x_2}{x_1}+a_2\left(\frac{x_2}{x_1}\right)^2+\cdots+a_k\left(\frac{x_2}{x_1}\right)^k+\cdots\right) \text{ if } \o_1<\o_2$$
and have support in the cone generated by $(0,1)$, $(1,0)$ and $(-1,1)$, or
$$\pm x_2^{\frac{1}{2}}\left(1+a_1\frac{x_1}{x_2}+a_2\left(\frac{x_1}{x_2}\right)^2+\cdots+a_k\left(\frac{x_1}{x_2}\right)^k+\cdots\right) \text{ if } \o_1>\o_2$$
and have support in the cone generated by $(0,1)$, $(1,0)$ and $(1,-1)$, where the $a_k$ are the coefficients of the following Taylor expansion:
$$\sqrt{1+U}=1+a_1U+a_2U+\cdots+a_kU^k+\cdots$$

But unlike the case $n=1$ these latter fields of Puiseux series (each of them depending on a given vector $\o$)  are strictly larger than the algebraic closure of $\K\(x\)$. So a natural question is to find  conditions for a Laurent Puiseux  series with coefficients in a strongly convex cone containing ${\R_{\geq 0}}^n$ to be algebraic over $\K\(x\)$.\\
Let us remark that  it is straightforward to see that a Laurent Puiseux series $\xi=\sum_{\a\in\Z^n}\xi_\a x^{\a/k}$ is algebraic over $\K\(x\)$ if and only if the Laurent  series $\tilde\xi=\sum_{\a\in\Z^n}\xi_\a x^{\a}$ is algebraic over $\K\(x\)$. Indeed if $P(x,T)$ is a nonzero vanishing polynomial of $\xi$ then $P(x^k,T)$ is nonzero vanishing polynomial of $\tilde\xi$, and if $\tilde\xi$ is algebraic over $\K\(x\)$ then $\xi$ is algebraic over $\K\(x^{1/k}\)$ which is a finite extension of $\K\(x\)$. So we can  restrict the question to the problem of algebraicity of a Laurent series with support in a strongly convex cone. The aim of this work is to provide necessary conditions for such Laurent series to be algebraic over $\K\(x\)$.\\

The conditions we are investigating are defined in terms of  the support of the given Laurent series. \\
Let us mention that the problem of determining the support of a series algebraic over $\K[x]$ or $\K[[x]]$ is an important problem related to several fields as tropical geometry (cf. for instance \cite{EKL} where the support of a rational power series is studied) or  combinatorics (cf. \cite{HM} for instance) and number theory (cf. for instance \cite{AB} for a characterization of the support of a power series algebraic over $\K[x]$ where $\K$ is a  field of positive characteristic in terms of $p$-automata, while it is still an open problem to prove that the set of vanishing coefficients of a univariate algebraic power series over a characteristic zero field is a periodic set).\\
On the other hand it is probably not possible to characterize completely the Laurent series which are algebraic over $\K[[x]]$ just in term of their support. A complete characterization of the algebraicity of Laurent series would probably involve conditions on the coefficients as it is the case for univariate algebraic power series in positive characteristic (see \cite{Ke}).\\

Our first main result, that will be very useful in the sequel, is a general construction of algebraically closed fields containing the field $\K(\!(x)\!)$. In particular it generalizes and unifies the previous constructions given in \cite{McDonald:1995, Gonzalez:2000, ArocaIlardi:2009, SotoVicente}. This result is the following one (see Section \ref{preorders} for the definition of a continuous positive order - but essentially this is a total order on $\R^n$ compatible with the addition and such that the elements of ${\R_{\geq 0}}^n$ are non-negative):

\begin{reptheorem}{TeoremaAlgCerrado}
Let $\K$ be a characteristic zero field and let $\preceq$ be a continuous positive order on $\R^n$. Then the set, denoted by $\mathscr{S}^\K_\preceq$, of  series $\xi$ for which there exist $k\in \Z_{> 0}$, $\gamma\in\Z^n$ and a rational cone $\sigma$ whose elements are non-negative for $\preceq$  and such that
$$		\Supp  (\xi )\subset (\gamma +\sigma )\cap \frac{1}{k}\Z^n $$
is an algebraically closed field containing $\K(\!(x)\!)$.
\end{reptheorem}

Let us mention that the  proof of this theorem is a direct consequence of a very nice result of F. J. Rayner \cite{Rayner:1974} that has been proven twenty years before the works \cite{McDonald:1995, Gonzalez:2000, ArocaIlardi:2009, SotoVicente}. \\

Our second result, and the most difficult one,  concerning the support conditions we were discussing before, can be summarized as follows: 
\begin{reptheorem}{main_thm}
Let $\xi$ be a Laurent power series which is algebraic over $\K(\!(x)\!)$ and which is not in the localization $\K[[x]]_{x_1\cdots x_n}$. Then there exists a  hyperplane $H\subset \R^n$  such that 
\begin{enumerate}
\item[i)] $\Supp(\xi)\cap H$ is infinite,
\item[ii)] one of the half-spaces delimitated by $H$ contains only a finite number of elements of $\Supp(\xi)$.
\end{enumerate}
\end{reptheorem}
 In fact Theorem \ref{main_thm} is more precise (see the complete statement in the core of the paper), but technical, and asserts the existence of a kind of maximal dual cone of the support of $\xi$. Its proof is essentially based on the identification of the elements of the algebraic closure of $\K(\!(x)\!)$ in the fields $\mathscr{S}^\K_\preceq$ when $\preceq$ runs over all the continuous positive orders on $\R^n$.
 This is the main tool to obtain our last main result which is the following one:

\begin{reptheorem}{gap_cor}[Gap Theorem]
Let $\xi$ be a Laurent series with support in $\g+\s$ where $\g\in\Z^n$ and $\s$ is a strongly convex cone containing the first orthant such that $\xi $ does not belong to the localization 
$ \K[[x]]_{x_1\cdots x_n}$. Let us assume that $\xi$ is algebraic over $\K[[x]]$. Let $\o=(\o_1,\ldots,\o_n)$ be in the interior of the dual of $\s$. We expand $\xi$ as
$$\xi=\sum_{i\in \Z_{\geq 0}}\xi_{k(i)}$$
where  
\begin{itemize}
\item[i)] for every $k(i) \in \Gamma=\Z\o_1+\cdots+\Z\o_n$, $\xi_{k(i)}$ is a (finite) sum of monomials of the form $cx^\a$ with $\o\cdot\a=k(i)$,
\item[ii)] the sequence $k(i)$ is a strictly increasing sequence of elements of $\Gamma$,
\item[iii)] for every integer $i$, $\xi_{k(i)}\neq 0$.
\end{itemize}

 Then there exists a constant $C>0$ such that
 $$k(i+1)\leq k(i)+C\ \  \ \ \forall i\in\Z_{\geq 0}.$$
\end{reptheorem}

This statement is similar to the following well known fact (see \cite{Fa} or \cite{Du} for a modern presentation of this): let $f$ be a  formal power series  algebraic over $\K[x]$ where $\K$ is a characteristic zero field. For every integer $k$ let $f_k$ denote the homogeneous part  of degree $k$ in the Taylor expansion of $f$. We can number these nonzero homogeneous parts by writing
$$f=\sum_{i\in\Z_{\geq 0}} f_{k(i)}$$
where $f_{k(i)}$ is the homogeneous part of degree $k(i)$ of $f$, $(k(i))_{i\in\Z_{\geq 0}}$ is strictly increasing and $f_{k(i)}\neq 0$ for every $i$. Then there exists an integer $C>0$ such that
$$k(i+1)\leq k(i)+C \ \  \ \ \forall i\in\Z_{\geq 0}.$$
This  comes from the fact that a power series algebraic over $\K[x]$ is $D$-finite when $\K$ is of characteristic zero. In some sense the proof of Theorem \ref{gap_cor} consists to reduce Theorem \ref{gap_cor} to this fact by using Theorem \ref{main_thm}.\\
\\
The paper is organized as follows. The first two sections are devoted to give basic definitions and results concerning cones and preorders on $\R^n$. In  Section \ref{alg_closed} we construct a family of algebraically closed fields containing $\K(\!(x)\!)$ (see Theorem \ref{TeoremaAlgCerrado}), each of them depending on a total order on $\R^n$. Then in Section \ref{dual_cone}, for a given Laurent series $\xi$ algebraic over $\K\(x\)$, we introduce two subsets of ${\R_{>0}}^n$, $\t_0(\xi)$ and $\t_1(\xi)$, whose definitions involve the preceding algebraically closed fields and we prove that $\t_0(\xi)$ plays the role of a maximal dual cone of $\Supp(\xi)$ (see Theorem \ref{main_thm}). 
Section \ref{gap_thmm} is devoted to the proof  of Theorem \ref{gap_cor}, which is based on Theorem \ref{main_thm} and $D$-finite power series. Finally in the last part we express some of the results in term of diophantine approximation properties for the fields of Laurent power series (see Theorem \ref{diop_cor}).\\

\textbf{Acknowledgment} We would like to thank Mark Spivakovsky for providing us the proof of Lemma \ref{LosEnterosInterseccionUnConoEstaBienOrenado}. We also thank Hussein Mourtada and Bernard Teissier for their helpful remarks. We also want to thank the referee for their useful and suitable comments and remarks.

\section{Polyhedral cones}

In this section we introduce some basic concepts of convex geometry. These concepts may be found in several books (see for example \cite{Fulton}).

A \textbf{(polyhedral) cone}   is a set of the form
$$\sigma =\langle u^{(1)}, \ldots, u^{(k)} \rangle :=\{\lambda_1 u^{(1)}+\cdots+ \lambda_k u^{(k)}; \;\lambda_i\in \R_{\geq 0},\; i=1, \ldots, k\}\subset \R^n$$
for some vectors $u^{(1)}, \ldots, u^{(k)} \in \R^n$.  
The $u^{(i)}$'s are called the \textbf{generators} of the polyhedral cone. A polyhedral cone is said to be  \textbf{rational} when it has a set of generators in $\Z^n$. 
A cone $\sigma\subset \R^n$ is rational if and only if $\sigma\cap\Z^n$ is a finitely generated semigroup.

We will denote by $e^{(1)},\ldots , e^{(n)}$ the vectors of the canonical basis of $\R^n$. With this notation the {\bf first orthant} is the polyhedral cone ${\R_{\geq 0}}^n =\langle e^{(1)},\ldots , e^{(n)}\rangle $. 

A subset $\s$ of $\R^n$ is a  \textbf{cone} if for every $s\in\s$ and $\l\in\R_{\geq0}$ we have that $\l s\in\s$. In the whole paper every cone will be polyhedral unless stated otherwise.

A cone is said to be \textbf{strongly convex} when it does not contain any non-trivial linear subspace. For a strongly convex polyhedral cone $\s$ a \textbf{vertex} of $\s$ is a one dimensional face of $\s$ or a vector generating such a one dimensional face. For a strongly convex cone $\s\subset \R^n$ we denote by $\P(\s)$ its image in $\P(\R^n)=\P^{n-1}(\R)$.

The \textbf{dimension} of a cone $\sigma$ is the dimension of the minimal linear subspace ${\mathcal L} (\sigma)$ containing $\sigma$ and is denoted by $\dim(\sigma)$.

The \textbf {dual} $\sigma^{\vee}$ of a cone $\sigma$ is the cone given by 
$$\sigma^{\vee} :=  \{ v \in \R^n \mid v \cdot u \geq 0, \forall u\in \sigma \}$$
where $u\cdot v$ stands for the dot product $(u_1,\ldots ,u_n)\cdot (v_1,\ldots ,v_n):= u_1v_1+\cdots +u_nv_n$.

\begin{lemma}\label{DualFuertConvDimMax}(see 1.2.(13) p. 14  \cite{Fulton})
	The dual of a polyhedral cone $\sigma$ has full dimension if and only if $\sigma$ is strongly convex. 
\end{lemma}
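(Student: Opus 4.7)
The plan is to prove each direction by contrapositive, using the observation that a convex cone $C \subset \R^n$ fails to have full dimension exactly when its linear span lies in some hyperplane $w^\perp$ for a nonzero $w \in \R^n$, i.e., $C \subseteq w^\perp$.

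First I would handle the direction ``$\sigma^\vee$ full dimensional $\Rightarrow$ $\sigma$ strongly convex.'' Suppose $\sigma$ contains a nontrivial linear subspace; then there is a nonzero $w \in \R^n$ with $w, -w \in \sigma$. By definition of the dual, every $v \in \sigma^\vee$ must satisfy $v\cdot w \geq 0$ and $v\cdot(-w) \geq 0$, hence $v\cdot w = 0$. Thus $\sigma^\vee \subseteq w^\perp$, a proper linear subspace, so $\dim(\sigma^\vee) \leq n-1$.

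For the converse, I would appeal to the biduality theorem for closed convex cones, $(\sigma^\vee)^\vee = \sigma$, which holds in particular for polyhedral cones (this is a standard fact, also in Fulton). Suppose $\sigma^\vee$ does not have full dimension. Then its linear span ${\mathcal L}(\sigma^\vee)$ is a proper subspace of $\R^n$, so there exists a nonzero vector $w \in \R^n$ with ${\mathcal L}(\sigma^\vee) \subseteq w^\perp$. In particular, for every $v \in \sigma^\vee$ we have $v\cdot w = 0$, which means both $w$ and $-w$ belong to $(\sigma^\vee)^\vee = \sigma$. Hence $\sigma$ contains the line $\R w$ and is not strongly convex.

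The only nontrivial ingredient is the biduality $(\sigma^\vee)^\vee = \sigma$ for polyhedral $\sigma$; everything else is immediate from the definitions. Since the lemma is already attributed to Fulton's book, I would simply cite that result rather than reprove it, and the whole argument reduces to the two short contrapositive paragraphs above.
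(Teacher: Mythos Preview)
Your argument is correct. Note, however, that the paper does not actually prove this lemma: it simply states it with a reference to Fulton's book (1.2.(13), p.~14), so there is no ``paper's own proof'' to compare against. What you have written is the standard two-line contrapositive argument using biduality $(\sigma^\vee)^\vee = \sigma$, which is precisely the content of the cited passage in Fulton, so your proposal is entirely in line with what the paper intends by that citation.
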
 

The \textbf{relative interior} of a cone $\sigma$ is the interior of $\sigma$ as a subset of ${\mathcal L} (\sigma )$. That is, if $\s=\langle u^{(1)}, \ldots, u^{(k)} \rangle$ is a polyhedral cone:
$$\Int \langle u^{(1)},\ldots, u^{(s)}\rangle = \{ \lambda_1 u^{(1)} +\cdots + \lambda _s u^{(s)}, \, \lambda_i \in \R_{>0}\}.$$
A cone $\s$ is \textbf{open} if its interior, denoted by $\text{Int}(\s)$, is equal to $\s\backslash\{0\}$. A polyhedral cone different from $\{0\}$ is never open.

Let $S\subset\R^n$ be any subset. We will denote
\[
 S^\perp:= \{ v\in\R^n\mid u\cdot v=0, \forall u\in S\}.
\]
\begin{lemma}\label{lemma_int}
	Let $\sigma\subset\R^n$ be a polyhedral strongly convex cone. Given $\omega\in\R^n$,
	\[
		\omega\in \Int  (\sigma^\vee )\Leftrightarrow \sigma\subset {\langle\omega\rangle}^\vee\text{ and }\sigma\cap\omega^\perp=\{0\}.
	\]
\end{lemma}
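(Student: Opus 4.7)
The plan is to reduce both sides of the equivalence to the single pointwise condition that $\omega\cdot u>0$ for every $u\in\sigma\setminus\{0\}$. First, since $\sigma$ is strongly convex, Lemma \ref{DualFuertConvDimMax} yields $\dim(\sigma^\vee)=n$, so the relative interior $\Int(\sigma^\vee)$ coincides with the topological interior of $\sigma^\vee$ in $\R^n$, which I shall use throughout. Unwinding the definitions, the inclusion $\sigma\subset\langle\omega\rangle^\vee$ says exactly that $\omega\cdot u\geq 0$ for every $u\in\sigma$, while $\sigma\cap\omega^\perp=\{0\}$ forbids equality for any nonzero $u\in\sigma$; so the right-hand side of the claimed equivalence is precisely the strict positivity condition above.

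It then suffices to show that $\omega\in\Int(\sigma^\vee)$ is equivalent to this strict positivity. For the forward direction I would argue by contradiction: if some nonzero $u\in\sigma$ satisfied $\omega\cdot u=0$, then since $\sigma^\vee$ is a neighbourhood of $\omega$, I could choose $\varepsilon>0$ small enough that $\omega-\varepsilon u\in\sigma^\vee$, and pairing with $u$ would give $-\varepsilon\|u\|^2\geq 0$, a contradiction. For the reverse direction I would invoke compactness of $\sigma\cap S^{n-1}$ (the polyhedral cone $\sigma$ being closed): the continuous map $u\mapsto\omega\cdot u$ attains a positive minimum $m>0$ on this compact set, and then for every $\omega'\in\R^n$ with $\|\omega'-\omega\|<m$ and every $u\in\sigma\cap S^{n-1}$ one has $\omega'\cdot u\geq m-\|\omega'-\omega\|>0$, so $\omega'\in\sigma^\vee$ and $\omega$ lies in the interior.

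There is no real obstacle: this is a standard convex-geometric fact, and the only mild subtleties are the appeal to Lemma \ref{DualFuertConvDimMax} to replace relative by topological interior, and the use of compactness of $\sigma\cap S^{n-1}$ to convert the pointwise strict positivity into a uniform lower bound that produces an open ball around $\omega$ inside $\sigma^\vee$.
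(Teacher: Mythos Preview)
Your proof is correct and follows essentially the same line as the paper's: both use Lemma~\ref{DualFuertConvDimMax} to identify $\Int(\sigma^\vee)$ with the topological interior, argue the forward direction by perturbing $\omega$ to violate nonnegativity against a $u$ with $\omega\cdot u=0$, and for the converse produce an open ball around $\omega$ inside $\sigma^\vee$. The only cosmetic difference is that in the reverse direction the paper exploits the finitely many generators $u^{(1)},\ldots,u^{(k)}$ of the polyhedral cone to get the uniform bound, while you use compactness of $\sigma\cap S^{n-1}$; both are equally valid here.
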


\begin{proof}
Clearly if $\o\in  \Int  (\sigma^\vee )$ then $\o\in \s^\vee$ so $\s\subset \langle\o\rangle^\vee$.\\
Since $\s$ is strongly convex its dual cone $\s^\vee$ has full dimension. So the interior of $\s^\vee$ is its interior as a subset of $\R^n$.\\
 Then if $\o\cdot u=0$ for some $u\in\s$, for any $\e>0$ there exists $\o'\in\R^n$ such that $\|\o-\o'\|<\e$ and $\o'\cdot u<0$, hence $\o$ is not in $\Int  (\sigma^\vee )$.\\
 On the other hand if  $\o\cdot u>0$ for every $u\in \s$, then $\o\cdot u^{(i)}>0$ for every $i$  where $\{u^{(1)},\ldots,u^{(k)}\}$ is a set of generators of $\s$. Then for $\e>0$ small enough we have 
 $\o'\cdot u^{(i)}>0$ for every $i$ when $\o'\in\R^n$ satisfies $\|\o'-\o\|<\e$, hence the open ball $B(\o,\e)$ is in $\s^\vee$. This shows that $\o\in  \Int  (\sigma^\vee )$.
\end{proof}

\begin{lemma}\label{lemma_tech}
Let $\s$ be a full dimensional cone in $\R^n$ and $\g_1$, $\g_2\in \R^n$. Then
$$(\g_1+\s)\cap(\g_2+\s)\neq \emptyset.$$
\end{lemma}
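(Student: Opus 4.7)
My plan is to reformulate the non-emptiness of $(\g_1+\s)\cap(\g_2+\s)$ as the existence of $s_1,s_2\in\s$ with $\g_1+s_1=\g_2+s_2$, or equivalently with $s_1-s_2=\g_2-\g_1$. This reduces the lemma to proving that the Minkowski difference $\s-\s:=\{s_1-s_2\mid s_1,s_2\in\s\}$ exhausts all of $\R^n$ as soon as $\s$ is full dimensional.

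To prove $\s-\s=\R^n$ I would first use full dimensionality to pick a point $v$ in the Euclidean interior of $\s$: since $\dim\s=n$, the cone $\s$ contains $n$ linearly independent vectors, whose strictly positive combinations form an open subset of $\s$ (hence an open subset of $\R^n$, as $\s$ has full dimension). Then, given an arbitrary $w\in\R^n$, the vector $v+w/t$ tends to $v$ as $t\to+\infty$, so for $t$ large enough it still lies in the (open) interior of $\s$; since $\s$ is stable under positive dilations, this yields $tv+w=t(v+w/t)\in\s$, while $tv\in\s$ is clear. Hence $w=(tv+w)-tv$ belongs to $\s-\s$.

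Applied to $w=\g_2-\g_1$, this construction produces $s_2:=tv$ and $s_1:=tv+(\g_2-\g_1)$ in $\s$ satisfying $\g_1+s_1=\g_2+s_2$, which gives the desired common element. The argument presents no genuine obstacle; the only piece of geometry needed beyond bookkeeping is the passage from full dimensionality to non-empty Euclidean interior, a one-line consequence of the definition of $\dim\s$ as the dimension of the minimal linear subspace containing $\s$.
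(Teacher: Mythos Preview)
Your proof is correct but follows a different route from the paper's. The paper exploits the polyhedral structure directly: it picks generators $u^{(1)},\ldots,u^{(k)}$ of $\s$, writes $\g_1-\g_2=\sum_i \l_i u^{(i)}$ (possible since full dimensionality forces the $u^{(i)}$ to span $\R^n$), separates the positive from the negative $\l_i$, and checks that moving the negative part to the $\g_1$ side and the positive part to the $\g_2$ side gives a common point. Your argument is instead topological: you show $\s-\s=\R^n$ by picking an interior point $v$ and using the scaling $w=(tv+w)-tv$ for $t$ large.

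Both are short, but they have different scope. The paper's splitting-of-coefficients trick is completely explicit and produces the common point by a one-line formula, but it leans on the finite list of generators and hence on the polyhedral hypothesis. Your interior-point argument never uses generators and goes through verbatim for any convex cone with non-empty Euclidean interior, polyhedral or not; the only ingredient you need is that full dimensionality implies non-empty interior, which you justify correctly via $n$ linearly independent vectors in $\s$.
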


\begin{proof}
Let $u^{(1)},\ldots, u^{(k)}\in\R^n$ be generators of $\s$. Since $\s$ is full dimensional the vector space spanned by the $u^{(i)}$ is $\R^n$. Thus there exist scalars $\l_i\in\R$ such that 
$$\g_1-\g_2=\sum_{i=1}^k\l_iu^{(i)}.$$
After a permutation of the $u^{(i)}$ we may assume that there exists an integer $l\leq k$ such that
$$\l_i\leq 0 \text{ for }i\leq l \text{ and } \l_i\geq 0 \text{ for } i>l.$$
Thus we have
$$\g_1+\sum_{i=1}^l(-\l_i)u^{(i)}=\g_2+\sum_{j=l+1}^k\l_ju^{(j)}\in (\g_1+\s)\cap(\g_2+\s).$$
\end{proof}

\begin{lemma}\label{SiEstaContenidoEnDosDesplazadosDeCono}
	Let  $\sigma_1$ and $\sigma_2$ be two cones and $\g_1$ and $\g_2$ be vectors of $\R^n$. Let us assume that $\s_1\cap\s_2$ is full dimensional. Then there exists a vector $\g\in\R^n$ such that

	$$\left( \g_1 +\sigma_1\right)\cap\left( \g_2 +\sigma_2\right)\subset \g + \sigma_1\cap\sigma_2.$$
	
\end{lemma}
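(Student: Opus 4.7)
The plan is to reduce to the previous Lemma \ref{lemma_tech} applied to the cone $\sigma_1\cap\sigma_2$, which is allowed since by hypothesis $\sigma_1\cap\sigma_2$ is full dimensional. Applying that lemma to $\sigma_1\cap\sigma_2$ with the two vectors $\gamma_1$ and $\gamma_2$, we obtain a point in the intersection $(\gamma_1+\sigma_1\cap\sigma_2)\cap(\gamma_2+\sigma_1\cap\sigma_2)$, i.e.\ two vectors $v_1,v_2\in\sigma_1\cap\sigma_2$ such that
$$\gamma_1+v_1=\gamma_2+v_2.$$

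The idea is now to define $\gamma:=\gamma_1-v_2=\gamma_2-v_1$ (these two expressions coincide by the relation above). Intuitively $\gamma$ sits ``below'' both $\gamma_1$ and $\gamma_2$ in the partial orders induced by $\sigma_1$ and $\sigma_2$, and the key point is that the descent vectors $v_1,v_2$ can be chosen simultaneously inside the intersection $\sigma_1\cap\sigma_2$.

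To verify the inclusion, pick any $x\in(\gamma_1+\sigma_1)\cap(\gamma_2+\sigma_2)$, and write $x=\gamma_1+s_1=\gamma_2+s_2$ with $s_i\in\sigma_i$. Then
$$x-\gamma=s_1+v_2\in\sigma_1,$$
because $s_1\in\sigma_1$ and $v_2\in\sigma_1\cap\sigma_2\subset\sigma_1$, and a polyhedral cone is stable under addition. Symmetrically
$$x-\gamma=s_2+v_1\in\sigma_2,$$
so $x-\gamma\in\sigma_1\cap\sigma_2$, i.e.\ $x\in\gamma+\sigma_1\cap\sigma_2$.

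There is no real obstacle here beyond spotting the right choice of $\gamma$; the only place where the full dimensionality of $\sigma_1\cap\sigma_2$ is used is to invoke Lemma \ref{lemma_tech}, which in turn encodes the fact that two translates of a full dimensional cone always meet.
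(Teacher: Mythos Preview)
Your proof is correct and follows essentially the same approach as the paper. Both arguments invoke Lemma~\ref{lemma_tech} on the full dimensional cone $\sigma_1\cap\sigma_2$ to produce a vector $\gamma$ with $\gamma_1-\gamma,\gamma_2-\gamma\in\sigma_1\cap\sigma_2$; the paper then concludes set-theoretically via $(\gamma+\sigma_1)\cap(\gamma+\sigma_2)=\gamma+\sigma_1\cap\sigma_2$, while you verify the same inclusion elementwise, which amounts to the same computation.
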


\begin{proof}
By Lemma \ref{lemma_tech} there exists $\g\in(\g_1-\s_1\cap\s_2)\cap(\g_2-\s_1\cap\s_2)$. In particular we have that
$$\g_1,\, \g_2\in \g+\s_1\cap\s_2.$$
Thus 
$$\g_1+\s_1\subset \g+\s_1 \text{ and } \g_2+\s_2\subset \g+\s_2.$$
But 
$$(\g+\s_1)\cap(\g+\s_2)=\g+\s_1\cap\s_2.$$
This proves the lemma. 
\end{proof}

\section{Preorders}\label{preorders}
\begin{definition}(\cite{EI}; see also \cite{GT})
A preorder on $\R^n$ is a relation $\preceq$ satisfying the following conditions:
\begin{itemize}
\item[i)] For every $u$, $v\in\R^n$ we have $u\preceq v$ or $v\preceq u$.
\item[ii)] For every $u$, $v$, $w\in\R^n$ we have $u\preceq v, \ v\preceq w\Longrightarrow u\preceq w$.
\item[iii)] For every $u$, $v$, $w\in\R^n$, if $u\preceq v$ then $u+w\preceq v+w$.
\end{itemize}
\end{definition}

By ii) and iii) a preorder $\preceq$ is  {\bf compatible with the group structure}, i.e. $\a\preceq \b$ and $\g\preceq \delta$ implies $\a+\g\preceq \b+\delta$ for every $\a$, $\b$, $\g$ and $\delta\in\R^n$.

\begin{remark}
An order is a preorder if and only if it is a total order compatible with the group structure.
\end{remark}

Given a preorder $\preceq$ in $\R^n$ the set of non-negative elements will be denoted by $(\R^n)_{\succeq 0}$; that is,
\[
(\R^n)_{\succeq 0}:=\{ \alpha\in\R^n \mid 0\preceq \alpha\}.
\]
A set $S\subset\R^n$ is called {\bf $\preceq$-non-negative} when $S\subset (\R^n)_{\succeq 0}$. 

We will say that a preorder is {\bf positive} when the first orthant is non-negative for that preorder.

\begin{remark}\label{ElConoPosEsFuertConvexo}
	When a preorder $\preceq$ is a total order on $\R^n$, a $\preceq$-non-negative set does not contain any non trivial linear subspace. In particular a $\preceq$-non-negative cone is strongly convex. 
\end{remark}

\begin{lemma} \label{LaUniondeConosEstaEnUncono}
	Given a preorder $\preceq$ on $\R^n$, let $\sigma_1$ and $\sigma_2$ be $\preceq$-non-negative rational cones.  There exists a $\preceq$-non-negative rational cone $\sigma_3$ such that
  \[
  \sigma_1\cup\sigma_2\subset \sigma_3.
  \]
\end{lemma}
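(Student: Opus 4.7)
The plan is to take $\sigma_3$ to be the cone generated by the union of the generators of $\sigma_1$ and $\sigma_2$, i.e., if $\sigma_1 = \langle u^{(1)},\ldots,u^{(k)}\rangle$ and $\sigma_2 = \langle v^{(1)},\ldots,v^{(\ell)}\rangle$ with all generators in $\Z^n$, set
$$\sigma_3 := \langle u^{(1)},\ldots,u^{(k)},v^{(1)},\ldots,v^{(\ell)}\rangle.$$
This is evidently a rational polyhedral cone containing $\sigma_1\cup\sigma_2$, so the only content is to verify that every element of $\sigma_3$ is $\preceq$-non-negative.

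First I would observe that each generator $u^{(i)}$ belongs to $\sigma_1$ and is therefore $\preceq$-non-negative by hypothesis; similarly for each $v^{(j)}$. Next, since the preorder is compatible with positive scaling (a cone is closed under multiplication by $\R_{\geq 0}$, so $\lambda u^{(i)}\in\sigma_1$ for $\lambda\ge 0$, hence $0\preceq \lambda u^{(i)}$), any non-negative multiple of a generator remains $\preceq$-non-negative. Finally, I would invoke compatibility with the group structure: if $0\preceq a$ and $0\preceq b$ then $b\preceq a+b$ (by adding $b$ to both sides of $0\preceq a$), and combining with $0\preceq b$ via transitivity yields $0\preceq a+b$.

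Applying this addition step inductively to a typical element
$$w = \sum_{i=1}^k \lambda_i u^{(i)} + \sum_{j=1}^\ell \mu_j v^{(j)}, \qquad \lambda_i,\mu_j\in\R_{\ge 0},$$
of $\sigma_3$ shows $0\preceq w$, so $\sigma_3$ is $\preceq$-non-negative, as required.

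There is no real obstacle here; the proof is a direct unpacking of the definitions (rationality of the generating set, closure of cones under non-negative combinations, and the two axioms of a preorder that give compatibility with the additive group structure). The only small point to state cleanly is the passage from $0\preceq a,\,0\preceq b$ to $0\preceq a+b$, which is exactly the consequence of axioms ii) and iii) noted in the remark after the definition of preorder.
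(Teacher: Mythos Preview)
Your proof is correct and follows essentially the same approach as the paper: both take $\sigma_3$ to be the cone generated by $\sigma_1\cup\sigma_2$ and verify $\preceq$-non-negativity via compatibility with the group structure. The paper's argument is slightly more concise, writing a general element of $\sigma_3$ as $v_1+v_2$ with $v_i\in\sigma_i$ (absorbing the non-negative scalar multiples back into the cones) rather than working generator-by-generator, but the content is identical.
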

\begin{proof}
	Take $\sigma_3$ to be the cone generated by $\sigma_1\cup\sigma_2$. The elements of $\sigma_3$ are of the form $v_1+v_2$ with $v_i\in\sigma_i$. Since $v_i\succeq 0$ then $v_1+v_2\succeq 0$.
\end{proof}

\begin{lemma}\label{LaUniondeConosTrasladadosEstaEnUnconoTrasladado}
  Given a positive total order $\preceq$ on $\R^n$ compatible with the group structure, let $\sigma_1$ and $\sigma_2$ be $\preceq$-non-negative rational cones. For any two points $\gamma_1$ and $\gamma_2$ in $\R^n$ there exist $\gamma_3\in\R^n$ and a $\preceq$-non-negative rational cone $\sigma_3$ such that
  \[
  \left(\gamma_1 +\sigma_1\right)\cup\left(\gamma_2 +\sigma_2\right)\subset \gamma_3 +\sigma_3.
  \]
\end{lemma}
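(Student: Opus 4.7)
The plan is to reduce this to Lemma \ref{LaUniondeConosEstaEnUncono} (for finding a common enlarged $\preceq$-non-negative cone) together with Lemma \ref{lemma_tech} (for finding a common basepoint $\gamma_3$). First I would build $\sigma_3$, and then I would locate $\gamma_3$ by a translation argument.

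More precisely, as a first step I would set
\[
\sigma_3 := \left\langle \sigma_1 \cup \sigma_2 \cup \{e^{(1)}, \ldots, e^{(n)}\} \right\rangle.
\]
This is rational (since $\sigma_1,\sigma_2$ are rational and the $e^{(i)}$ are integral) and contains the first orthant, so it is full-dimensional. Because $\preceq$ is positive, every $e^{(i)}$ is $\preceq$-non-negative, and then exactly as in the proof of Lemma \ref{LaUniondeConosEstaEnUncono} every element of $\sigma_3$ is a sum of $\preceq$-non-negative generators, so $\sigma_3$ is $\preceq$-non-negative.

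In a second step I would find $\gamma_3$. What I want is $\gamma_1,\gamma_2\in \gamma_3+\sigma_3$, equivalently $\gamma_3\in (\gamma_1-\sigma_3)\cap(\gamma_2-\sigma_3)$. Since $\sigma_3$ (hence $-\sigma_3$) is full-dimensional, Lemma \ref{lemma_tech} applied to the cone $-\sigma_3$ with the two points $\gamma_1,\gamma_2$ provides precisely such a $\gamma_3$. Then for $i=1,2$ we have $\gamma_i-\gamma_3\in\sigma_3$ and $\sigma_i\subset\sigma_3$, so
\[
\gamma_i+\sigma_i = \gamma_3 + (\gamma_i-\gamma_3) + \sigma_i \subset \gamma_3 + \sigma_3 + \sigma_3 = \gamma_3 + \sigma_3,
\]
using that $\sigma_3$ is closed under addition. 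Taking the union over $i=1,2$ gives the claim.

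The main (and really the only) issue is to ensure the existence of $\gamma_3$, which needs $\sigma_3$ to be full-dimensional so that $\sigma_3 - \sigma_3 = \R^n$; that is why the hypothesis that $\preceq$ is \emph{positive} is crucial, as it allows us to safely enlarge $\sigma_3$ by the canonical basis vectors without losing $\preceq$-non-negativity. Everything else is a short verification.
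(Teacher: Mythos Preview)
Your proof is correct and follows essentially the same approach as the paper: build a $\preceq$-non-negative rational cone $\sigma_3$ containing $\sigma_1$, $\sigma_2$ and the first orthant (the paper invokes Lemma~\ref{LaUniondeConosEstaEnUncono} for this, while you write the cone down explicitly), then use full-dimensionality and Lemma~\ref{lemma_tech} applied to $-\sigma_3$ to find $\gamma_3\in(\gamma_1-\sigma_3)\cap(\gamma_2-\sigma_3)$, and conclude by the same translation argument.
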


\begin{proof}
Let $\s_3$ denote a  non-negative rational cone containing $\s_1$, $\s_2$ and the first orthant (such a cone exists by Lemma \ref{LaUniondeConosEstaEnUncono}). Since $\sigma_3$ contains the first orthant it is full dimensional. By Lemma \ref{lemma_tech} we can pick an element $\g_3$ in $(\g_1-\s_3)\cap(\g_2-\s_3)$.\\
In particular  $\g_1-\g_3\in\s_3$. Since $\s_1\subset \s_3$ we have that 
$$\g_1+\s_1=\g_3+(\g_1-\g_3)+\s_1\subset \g_3+\s_3.$$
By symmetry we also have
$$\g_2+\s_2\subset \g_3+\s_3.$$
This proves the lemma.
\end{proof}


A vector $\omega\in\R^n$ induces a preorder in $\R^n$ denoted by $\leq_\o$ and defined as follows:
\[
\alpha\leq_\omega\beta \Longleftrightarrow \omega\cdot\alpha\leq\omega\cdot\beta
\]
where $\omega\cdot\alpha$ denotes the dot product.

An $s$-tuple $(u_1,\ldots ,u_s)\in \R^{ns}$ induces a preorder in $\R^n$ denoted by $\leq_{(u_1,\ldots ,u_s)}$ and defined as follows:
\begin{equation}\label{OrdenDadoPorVectores}
\alpha\leq_{(u_1,\ldots ,u_s)}\beta \Longleftrightarrow \texttt{p}_{u_1,\ldots u_s} (\alpha)\leq_{\text{lex}} \texttt{p}_{u_1,\ldots, u_s} (\beta )
\end{equation}

where $\texttt{p}_{u_1,\ldots , u_s} (u) := (u\cdot u_1,\ldots , u\cdot u_s)$  and $\leq_{\text{lex}}$ is the lexicographical order.

The following result is given in \cite[Theorem 2.5]{Robbiano:1986}:
\begin{theorem}\label{rob}
	Let $\preceq$ be a preorder  on $\Q^n$. Then there exist $u_1,\ldots , u_s$ vectors in $\R^n$, for some integer $1\leq s\leq n$, such that the map
	\[
		\emph{\texttt{p}}_{u_1,\ldots, u_s} :(\Q^n, \preceq)\rightarrow (\R^s, \leq_{\text{lex}})
	\]
	is an injective morphism of ordered groups.\\
	Moreover we may always assume that the $u_i$ are orthogonal and, when the preorder is a total order,  $s=n$.
\end{theorem}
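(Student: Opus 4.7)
I would argue by induction on $n$. The base case $n = 1$ is immediate: any nontrivial total preorder on $(\Q,+)$ compatible with addition is, up to sign, the standard order, so $s = 1$ with $u_1 = \pm 1$ works. For the inductive step the core task is to produce a single vector $u_1 \in \R^n$ such that $u_1 \cdot v \geq 0$ whenever $v \succeq 0$, and such that $K := \{v \in \Q^n : u_1 \cdot v = 0\}$ is a proper $\Q$-subspace of $\Q^n$. Given such $u_1$, one restricts $\preceq$ to $K$ and applies the inductive hypothesis to obtain $u_2,\ldots,u_s$ lying in the $\R$-span of $K$ (hence inside $u_1^\perp$) and realizing a lex representation of the restricted preorder. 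A direct check then shows that $v \succ 0$ in $\Q^n$ is equivalent to $(u_1 \cdot v,\ldots, u_s \cdot v) >_{\text{lex}} 0$: the case $u_1 \cdot v \neq 0$ is forced by the choice of $u_1$, while the case $u_1 \cdot v = 0$ falls inside $K$ and is handled by the inductive hypothesis.

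To construct $u_1$ I invoke the classical theory of convex subgroups together with H\"older's theorem on archimedean totally ordered abelian groups. Pick $v_0 \in \Q^n$ with $v_0 \succ 0$ lying in a maximal archimedean equivalence class --- such a class exists since $\Q^n$ has finite $\Q$-rank and so admits only finitely many archimedean classes. Let $K$ be the largest $\preceq$-convex subgroup of $\Q^n$ not containing $v_0$, namely
\[
K = \{v \in \Q^n : n|v| \prec v_0 \text{ for every } n \in \Z_{>0}\},
\]
where $|v|$ denotes $v$ or $-v$ according to the sign of $v$. A short argument using the triangle inequality $|a+b| \preceq |a|+|b|$ shows that $K$ is closed under addition; closure under positive rational scalars follows from compatibility of $\preceq$ with addition. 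Hence $K$ is a $\Q$-subspace, and $v_0 \notin K$ forces $K \subsetneq \Q^n$. The quotient $\Q^n/K$ inherits a strict archimedean total order --- archimedean-ness relying precisely on the maximality of the class of $v_0$ --- and H\"older's theorem then embeds it order-preservingly into $(\R,+)$. Composing with the projection $\Q^n \to \Q^n/K$ yields a $\Q$-linear order-preserving map $\Q^n \to \R$, whose unique $\R$-linear extension to $\R^n$ has the form $v \mapsto u_1 \cdot v$ for some $u_1 \in \R^n$; by construction $\{v \in \Q^n : u_1 \cdot v = 0\} = K$.

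The remaining assertions are routine. Orthogonality of $(u_1,\ldots,u_s)$ is obtained by Gram--Schmidt: replacing each $u_i$ by its component orthogonal to $\mathrm{span}(u_1,\ldots,u_{i-1})$ alters the $i$-th coordinate of $\texttt{p}_{u_1,\ldots,u_s}$ only by a linear combination of earlier coordinates, which is invisible to the lex comparison once those earlier coordinates coincide. When $\preceq$ is a total order and the construction yields $s < n$, one freely appends vectors in $(\R u_1 + \cdots + \R u_s)^\perp$: the existing map is already injective on $\Q^n$, so the extra coordinates act as tie-breakers that are never invoked. The principal obstacle lies in verifying that $K$ is a proper $\Q$-subspace with archimedean quotient; these are standard facts in the theory of ordered abelian groups but require some care with the preorder absolute value, whereas H\"older's theorem itself is used as a black box.
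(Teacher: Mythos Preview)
The paper does not prove this statement; it is quoted from Robbiano \cite{Robbiano:1986} (Theorem~2.5 there) and used as a black box throughout. There is therefore no proof in the paper to compare against.

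Your argument is correct and follows one of the standard routes to this result: isolate the top archimedean layer via the largest proper convex subgroup, apply H\"older's theorem to embed the (archimedean) quotient into $\R$, and recurse on the convex subgroup. Two minor points. First, closure of $K$ under addition needs one more step than the bare triangle inequality: from $m|v+w|\preceq m|v|+m|w|$ one finishes by observing $m|v|+m|w|\preceq 2m\max(|v|,|w|)\prec v_0$, the last inequality holding because $2m|v|\prec v_0$ and $2m|w|\prec v_0$ by the definition of $K$. Second, the degenerate case in which $\preceq$ is the trivial preorder (every element equivalent to every other) admits no $v_0\succ 0$ and so is not covered by your construction; but in that case the theorem as literally phrased---demanding an \emph{injective} morphism into a totally ordered group---cannot hold anyway, so this is a wrinkle in the statement rather than a gap in your proof.
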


In the light of Theorem \ref{rob}, when interested in restrictions to the rational numbers, we may consider only preorders of type (\ref{OrdenDadoPorVectores}). These preorders are called {\bf continuous preorders}. An order which is a continuous preorder is called a {\bf continuous order}.

The following lemma can be deduced from Theorem 3.4 given in \cite{Neu} but for the convenience of the reader we provide a direct proof of it:
\begin{lemma}\label{LosEnterosInterseccionUnConoEstaBienOrenado}
	Given a total order $\preceq$ in $\R^n$ compatible with the group structure, let $\sigma$ be a $\preceq$-non-negative rational cone. The set $\sigma\cap\Z^n$ is well ordered.
\end{lemma}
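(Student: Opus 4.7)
The plan is to use the fact (recalled early in the text) that a rational cone $\sigma$ has the property that $S := \sigma \cap \Z^n$ is a finitely generated additive sub-semigroup of $\Z^n$, and then reduce well-orderedness to \emph{Dickson's lemma} on $\Z_{\geq 0}^k$.

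First, I fix generators $v_1,\ldots,v_k \in \Z^n$ of $S$. Since each $v_j$ lies in $\sigma$ and $\sigma$ is $\preceq$-non-negative, every $v_j$ satisfies $v_j \succeq 0$, and any non-negative integer combination of the $v_j$'s is again in $\sigma$, hence $\succeq 0$. Every $w \in S$ can be written (not uniquely, in general) as $w = \sum_{j=1}^k a_j v_j$ with $a_j \in \Z_{\geq 0}$.

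Next, since $\preceq$ is a total order on $\R^n$, it suffices to rule out an infinite strictly $\preceq$-decreasing sequence in $S$. Suppose for contradiction that $w_1 \succ w_2 \succ \cdots$ is such a sequence, and pick for each $i$ a decomposition $w_i = \sum_{j=1}^k a_j^{(i)} v_j$. Applying Dickson's lemma to the sequence $\bigl(a^{(i)}\bigr)_{i \geq 1}$ in $\Z_{\geq 0}^k$ yields indices $i_1 < i_2$ with $a^{(i_1)} \leq a^{(i_2)}$ componentwise. Then
\[
	w_{i_2} - w_{i_1} \;=\; \sum_{j=1}^k \bigl(a_j^{(i_2)} - a_j^{(i_1)}\bigr)\, v_j
\]
is a non-negative integer combination of the $v_j$, hence lies in $\sigma$ and is therefore $\succeq 0$. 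By the addition-compatibility of $\preceq$ this gives $w_{i_2} \succeq w_{i_1}$, contradicting the strict inequality $w_{i_1} \succ w_{i_2}$.

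No genuine obstacle arises: once one thinks of parametrizing elements of $S$ by tuples of exponents in $\Z_{\geq 0}^k$, the conjunction of Dickson's lemma and the addition-compatibility of the order does everything, and in particular one never needs to invoke the lex-presentation from Theorem \ref{rob}. The only mild subtlety is that the representation $w = \sum a_j v_j$ is not unique, but this is harmless because the argument only uses one chosen representative per term in the decreasing sequence.
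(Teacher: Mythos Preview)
Your argument is correct. Both proofs begin with Gordan's lemma to get finitely many semigroup generators $v_1,\dots,v_k$ of $\sigma\cap\Z^n$, but from there they diverge. The paper builds a valuation-like map $\K[y_1,\dots,y_k]\to(\sigma\cap\Z^n,\preceq)\cup\{\infty\}$ via $f\mapsto\min_\preceq\Supp f(x^{v_1},\dots,x^{v_k})$, passes to the quotient toric ring $R$, and shows that a strictly $\preceq$-decreasing sequence would produce a strictly increasing chain of ideals in $R$, contradicting Noetherianity. Your route is purely combinatorial: lift each term of a hypothetical decreasing sequence to an exponent vector in $\Z_{\geq 0}^k$, invoke Dickson's lemma to find a componentwise comparable pair, and use addition-compatibility of $\preceq$ to derive the contradiction. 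Your argument is shorter and avoids the auxiliary field and ring altogether; the paper's version, on the other hand, makes explicit the link to valuation theory and the Noetherian property, which resonates with how these semigroups are used later (e.g.\ in defining $\nu_\preceq$ on $\mathscr{S}^\K_\preceq$). At bottom the two are close cousins, since Dickson's lemma and the Noetherianity of $\K[y_1,\dots,y_k]$ are essentially equivalent facts, but the packaging is genuinely different.
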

\begin{proof}
	
Let $\{v^{(1)},\ldots ,v^{(s)}\}\subset \Z^n$ be a system of generators of the semigroup $\sigma\cap\Z^n$ and consider the mapping 
\[
\begin{array}{cccc}
	\nu_{\preceq,\{v^{(1)},\ldots ,v^{(s)}\}}:
		& \K [y_1,\ldots ,y_s]
			&\longrightarrow
				& (\sigma\cap\Z^n, \preceq )\cup \{\infty\}\\
		& f(y_1, \ldots ,y_s)
			&\mapsto
				&\min_\preceq \Supp (f( x^{v^{(1)}}, \ldots ,x^{v^{(s)}}))
\end{array}
\]
where $x=(x_1,\ldots, x_n)$ denotes a vector of new indeterminates.
Since $\{v^{(1)},\ldots ,v^{(s)}\}$ generates $\sigma\cap\Z^n$, the map $\nu_{\preceq,\{v^{(1)},\ldots ,v^{(s)}\}}$ is surjective.

Consider the ring $R:=\frac{\K [y_1,\ldots ,y_s]}{I}$ where $I$ is the following ideal:
 $$I=\{ f\in \K [y_1,\ldots ,y_s]\mid f( x^{v^{(1)}}, \ldots ,x^{v^{(s)}})=0\}.$$  

Suppose that the set $(\sigma\cap\Z^n,\preceq)$ is not well ordered. Then there exists a sequence $(\gamma^{(i)})_{i\in\Z_{\geq 0}}\subset \sigma\cap\Z^n$ with $\gamma^{(i+1)}\preceq \gamma^{(i)}$ and $\gamma^{(i+1)}\neq \gamma^{(i)}$. Consider the ideals $J_i:= \{ f\in R\mid \nu_{\preceq,\{v^{(1)},\ldots ,v^{(s)}\}}f( x^{v^{(1)}}, \ldots ,x^{v^{(s)}})\succeq \gamma^{(i)}\}$. The chain $( J_i)_{i\in\Z_{\geq 0}}$ is an increasing sequence of ideals. Since any element of ${\nu_{\preceq,\{v^{(1)},\ldots ,v^{(s)}\}}}^{-1}(\gamma^{(i+1)})$ is not in $J_i$ we have that $J_i\neq J_{i+1}$ which contradicts the Noetherianity of $R$.
\end{proof}
A preorder $\preceq$ {\bf refines} $\preceq'$ when $\alpha\preceq\beta$ implies 
$\alpha\preceq'\beta$ for every $\a$, $\b\in\R^n$. For instance the preorder $\leq_{(u_1,\ldots ,u_s)}$ refines $\leq_{(u_1,\ldots ,u_k)}$ for every $s>k$ and every vectors $u_1,\ldots,u_s$.

\begin{lemma}\label{lemma_positive}
	Let be given $\omega\in\R^n\backslash\{0\}$ and a strongly convex cone $\sigma\subset\R^n$ with $\sigma\subset {\left<\omega\right>}^\vee$. There exists a continuous order $\preceq$ in $\R^n$ that refines $\leq_\omega$  such that $\sigma$ is a $\preceq$-non-negative set.
\end{lemma}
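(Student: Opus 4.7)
The plan is to argue by induction on $n$, constructing the desired order in the form $\preceq\ =\ \leq_{(\omega, u_2, \ldots, u_n)}$ with orthogonal vectors $u_2, \ldots, u_n \in \omega^\perp$ supplied by the inductive hypothesis applied inside the hyperplane $\omega^\perp$. Setting $u_1 = \omega$ automatically makes $\preceq$ refine $\leq_\omega$ (immediate from the lexicographic definition), and reduces the problem of checking $\sigma$-non-negativity to handling only those $v \in \sigma$ with $\omega \cdot v = 0$.

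For the base case $n = 1$, I would simply take $\preceq\ =\ \leq_\omega$. Since $\omega \neq 0$, this is a continuous total order on $\R$, and the hypothesis $\sigma \subset \langle\omega\rangle^\vee$ says exactly that $0 \preceq v$ for every $v \in \sigma$.

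For the inductive step, restrict attention to $\omega^\perp \simeq \R^{n-1}$ and let $\sigma' := \sigma \cap \omega^\perp$. As a subcone of the strongly convex polyhedral cone $\sigma$, the cone $\sigma'$ is strongly convex and polyhedral in $\omega^\perp$. By Lemma \ref{DualFuertConvDimMax} applied inside $\omega^\perp$, the dual of $\sigma'$ in $\omega^\perp$ has full dimension in $\omega^\perp$, so I may pick a nonzero vector $\omega' \in \omega^\perp$ belonging to this dual, ensuring $\sigma' \subset \langle\omega'\rangle^\vee$. Applying the inductive hypothesis to $(\omega', \sigma')$ inside $\omega^\perp$ produces orthogonal vectors $u_2, \ldots, u_n \in \omega^\perp$ such that $\leq_{(u_2, \ldots, u_n)}$ is a continuous total order on $\omega^\perp$ refining $\leq_{\omega'}$ and making $\sigma'$ non-negative.

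To conclude, I would verify that $\preceq\ :=\ \leq_{(\omega, u_2, \ldots, u_n)}$ meets all requirements. The vectors $\omega, u_2, \ldots, u_n$ are pairwise orthogonal (the $u_j$'s lie in $\omega^\perp$ and are mutually orthogonal by induction), and together they span $\R^n$; hence the lexicographic map is injective, making $\preceq$ a continuous total order in the sense of Theorem \ref{rob}. Given any $v \in \sigma$, the hypothesis $\sigma \subset \langle\omega\rangle^\vee$ gives $\omega \cdot v \geq 0$: if $\omega \cdot v > 0$ then $0 \prec v$, while if $\omega \cdot v = 0$ then $v \in \sigma'$ and the inductive hypothesis yields $(u_2 \cdot v, \ldots, u_n \cdot v) \geq_{\mathrm{lex}} 0$, hence $0 \preceq v$. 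The only point requiring care, and the conceptual crux of the argument, is recognising that the problem slices cleanly against $\omega^\perp$; once one checks that $\sigma \cap \omega^\perp$ inherits strong convexity and thus admits some $\omega'$ to re-start the induction, everything else is routine bookkeeping.
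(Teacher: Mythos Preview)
Your proposal is correct and follows essentially the same route as the paper's own proof: both argue by induction on $n$, slice $\sigma$ by the hyperplane $\omega^\perp$, choose a nonzero $\omega'\in\omega^\perp$ with $\sigma':=\sigma\cap\omega^\perp\subset\langle\omega'\rangle^\vee$ (using strong convexity of $\sigma'$), apply the inductive hypothesis inside $\omega^\perp$, and prepend $\omega$ to the resulting tuple of vectors. The only cosmetic differences are that the paper normalises $\omega=(0,\ldots,0,1)$ by a linear change of coordinates and does not track orthogonality of the $u_i$, whereas you work intrinsically in $\omega^\perp$ and spell out the final verification that $\sigma$ is $\preceq$-non-negative.
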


\begin{proof}
The proof is made by induction on $n$. For $n=1$, $\leq_\omega$ is a continuous  order in $\R$, $\s=\R_{\geq_\o 0}$ hence $\s$ is $\leq_\o$-non-negative.\\
Let us assume that the lemma is proven in dimension $n-1$ and let us consider $\o$ and $\s$ as in the statement of the lemma. After a linear change of coordinates we may assume that $\o=(0,\ldots,0,1)$. Then $\s'=\s\cap\langle \o\rangle^\perp$ is a strongly convex cone of $\langle \o\rangle^\perp\simeq\R^{n-1}$. \\
Let $\o'\in\langle \o\rangle^\perp$ be a nonzero vector such that $\s'\subset \langle\o'\rangle^\vee$. Such a vector $\o'$ exists since $\s'$ is strongly convex and it is included in a half-space. By the inductive hypothesis there exists a continuous  order $\preceq'$ in $\R^{n-1}\simeq\langle \o\rangle^\perp$ that refines the restriction of $\leq_{\o'}$ to $\langle\o\rangle^\perp$ and such that $\s'$ is $\preceq'$-non-negative.  Such an order $\preceq'$ is equal to $\leq_{(u_1,\ldots,u_s)}$ for some vectors $u_1,\ldots,u_s$ of $\langle \o\rangle^\perp$. 
Then $\leq_{(\o,u_1,\ldots,u_s)}$ is a continuous  order that refines $\leq_\o$ and $\s$ is $\leq_{(\o,u_1,\ldots,u_s)}$-non-negative.
\end{proof}


\begin{lemma}\label{lemma1}
	Let $u_1,u_2,\ldots, u_n$ be a basis of $\R^n$ and let $S$ be a subset of $\R^n$. Then
	\[
		S\subset \bigcap_{  \e_2,\ldots,\e_n\in\{-1,1\} } (\R^n)_{\geq_{(u_1,\e_2u_{2},\ldots ,\e_nu_n)} 0}
	\]
	holds if and only if $S\subset {\left< u_1\right>}^\vee$ and $S\cap {u_1}^\perp \subset\{0\}$.
\end{lemma}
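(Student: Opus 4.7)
The plan is to unpack the definition of the lexicographic preorder $\leq_{(u_1,\e_2u_2,\ldots,\e_nu_n)}$ and then analyze each direction of the equivalence separately. Recall that $\alpha\geq_{(u_1,\e_2u_2,\ldots,\e_nu_n)} 0$ precisely when the vector $(u_1\cdot\alpha,\e_2 u_2\cdot\alpha,\ldots,\e_n u_n\cdot\alpha)\in\R^n$ is lexicographically non-negative, i.e.\ its first nonzero coordinate (if any) is strictly positive. Since $u_1,\ldots,u_n$ form a basis, the map $\alpha\mapsto(u_1\cdot\alpha,\ldots,u_n\cdot\alpha)$ is a linear isomorphism, so this vector is zero only when $\alpha=0$.

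For the implication $(\Leftarrow)$, assume $S\subset\langle u_1\rangle^\vee$ and $S\cap u_1^\perp\subset\{0\}$. Pick $\alpha\in S$. If $\alpha=0$ then $\alpha\geq 0$ for every one of the orders. Otherwise $\alpha\notin u_1^\perp$, so $u_1\cdot\alpha\neq 0$; combined with $S\subset\langle u_1\rangle^\vee$ this forces $u_1\cdot\alpha>0$. The first coordinate of the defining vector is then strictly positive, so $\alpha\geq_{(u_1,\e_2u_2,\ldots,\e_nu_n)} 0$ no matter which signs $\e_2,\ldots,\e_n\in\{-1,1\}$ are chosen.

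For the implication $(\Rightarrow)$, assume $S$ lies in every $(\R^n)_{\geq_{(u_1,\e_2u_2,\ldots,\e_nu_n)} 0}$ and fix $\alpha\in S$. The lex-non-negativity condition immediately forces $u_1\cdot\alpha\geq 0$, proving $S\subset\langle u_1\rangle^\vee$. Suppose now that $\alpha\in S\cap u_1^\perp$ is nonzero. Since the $u_i$ form a basis and $u_1\cdot\alpha=0$ while $\alpha\neq 0$, there is a smallest index $k\in\{2,\ldots,n\}$ with $u_k\cdot\alpha\neq 0$. Choose $\e_k:=-\mathrm{sign}(u_k\cdot\alpha)$ and arbitrary signs $\e_j$ for $j\neq k$. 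Then the defining vector of $\alpha$ starts with $k-1$ zeros followed by the strictly negative entry $\e_k u_k\cdot\alpha<0$, so $\alpha<_{(u_1,\e_2u_2,\ldots,\e_nu_n)} 0$, contradicting the hypothesis. Hence $S\cap u_1^\perp\subset\{0\}$.

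No step here presents a real obstacle; the only thing to be careful about is the basis assumption, which is what guarantees that a nonzero $\alpha$ orthogonal to $u_1$ must pair nontrivially with some $u_k$, $k\geq 2$, thereby enabling the sign-flip that produces the contradiction.
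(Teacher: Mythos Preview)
Your proof is correct and follows essentially the same approach as the paper's. The only cosmetic difference is in the $(\Rightarrow)$ direction: the paper argues by induction that $s\cdot u_k=0$ for every $k$ (using both signs $\e_k=\pm 1$ at each step), while you argue contrapositively by locating the smallest index $k$ with $u_k\cdot\alpha\neq 0$ and flipping $\e_k$ to produce a contradiction; these are the same idea in different packaging.
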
 

\begin{proof}
Let $s\in\R^n$ be written as
$$s=\l_1u_1+\cdots+\l_nu_n$$
where the $\l_i$ are real numbers.\\
If 
$$s\in \bigcap_{ \e_2,\ldots,\e_n\in\{-1,1\}} (\R^n)_{\geq_{(u_1,\e_2u_{2},\ldots ,\e_nu_{n})} 0}$$
then we have that $s\cdot u_1\geq 0$ so $s\in \langle u_1\rangle^{\vee}$. Moreover if $s\cdot u_1=0$ then 
$$(s\cdot \e_2u_2,\ldots, s\cdot \e_nu_n)\geq_{\text{lex}} 0 \ \ \ \ \forall (\e_2,\ldots,\e_n)\in\{-1,1\}^{n-1}.$$
Thus $s\cdot u_2\geq 0$ and $s\cdot (-u_2)\geq 0$, so $s\cdot u_2=0$. By induction we have $s\cdot u_k=0$ for every $k$, hence $s=0$ since $(u_1,\ldots,u_n)$ is a basis of $\R^n$. \\
On the other hand if 
$s\in S$, $S\subset {\left< u_1\right>}^\vee$ and $S\cap {u_1}^\perp \subset\{0\}$, then $s\cdot u_1\geq 0$. If $s\cdot u_1>0$ then $s\in (\R^n)_{\geq_{(u_1,\e_2u_{2},\ldots ,\e_nu_{n})} 0}$ for every $\e_i$. If $s\cdot u_1=0$ then $s=0$ by assumption, hence $s\in (\R^n)_{\geq_{(u_1,\e_2u_{2},\ldots ,\e_nu_{n})} 0}$ for every $\e_i$. This proves the equivalence.

\end{proof}

\begin{corollary}\label{PositParaTodoRef}
	Let $\omega\neq 0$ be a vector in $\R^n$ and let $\sigma\subset\R^n$ be a cone. Let $u_2,\ldots,u_n\in\R^n$ be such that $\o$, $u_2,\ldots, u_n$ form a basis of $\R^n$. Then the following properties are equivalent:
	
	\begin{enumerate}
	\item[i)] 
	$\displaystyle\sigma\subset \bigcap_{\e\in\{-1,1\}^{n-1} } (\R^n)_{\geq_{(\o,\e_2u_2,\ldots ,\e_nu_n)} 0}$

	\item[ii)] $\displaystyle\sigma\subset \bigcap_{\preceq\text{ continuous preorder  refining } <_\omega} (\R^n)_{\succeq 0}$
	
	\item[iii)] $\omega\in \Int  (\sigma^\vee )$.
	
	\end{enumerate}

\end{corollary}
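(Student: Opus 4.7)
The plan is to establish the cycle $(ii) \Rightarrow (i) \Rightarrow (iii) \Rightarrow (ii)$, with Lemmas \ref{lemma1} and \ref{lemma_int} as the two main tools. The implication $(ii) \Rightarrow (i)$ is essentially immediate: each preorder $\leq_{(\omega, \e_2 u_2, \ldots, \e_n u_n)}$ appearing in (i) is itself a continuous preorder that refines $\leq_\omega$ (its first lexicographic coordinate is $\alpha \mapsto \omega \cdot \alpha$), so the intersection in (ii) is taken over a family of preorders that contains the family used in (i). Hence the set on the right of (ii) is contained in the set on the right of (i), and (ii) implies (i).

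For $(i) \Rightarrow (iii)$ I would apply Lemma \ref{lemma1} with $u_1 := \omega$ and $S := \sigma$ to rephrase (i) as the conjunction of $\sigma \subset \langle \omega \rangle^\vee$ and $\sigma \cap \omega^\perp \subset \{0\}$. A short observation then yields the strong convexity of $\sigma$: any line $\R v \subset \sigma$ would satisfy both $\omega \cdot v \geq 0$ and $\omega \cdot (-v) \geq 0$, forcing $v \in \omega^\perp \cap \sigma = \{0\}$. Once $\sigma$ is known to be strongly convex, Lemma \ref{lemma_int} produces $\omega \in \Int(\sigma^\vee)$ from these same two conditions. This step is the main bridge of the argument: one has to convert the combinatorial condition involving $2^{n-1}$ sign choices of the auxiliary vectors $u_2, \ldots, u_n$ into the geometric statement about the relative interior of $\sigma^\vee$. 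Lemma \ref{lemma1} does the bulk of the work, so the only subtle point is noticing that strong convexity of $\sigma$ is automatic before invoking Lemma \ref{lemma_int}.

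For $(iii) \Rightarrow (ii)$, assume $\omega \in \Int(\sigma^\vee)$. Then $\sigma^\vee$ has full dimension, so by Lemma \ref{DualFuertConvDimMax} the cone $\sigma$ is strongly convex, and Lemma \ref{lemma_int} gives $\sigma \subset \langle \omega \rangle^\vee$ and $\sigma \cap \omega^\perp = \{0\}$. Therefore every $v \in \sigma \setminus \{0\}$ satisfies $\omega \cdot v > 0$, i.e., $0 <_\omega v$. For any continuous preorder $\preceq$ refining $\leq_\omega$, a strict $\leq_\omega$-inequality remains strict under $\preceq$, hence $0 \prec v$, and thus $\sigma \subset (\R^n)_{\succeq 0}$, which closes the cycle.
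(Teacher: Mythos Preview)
Your proof follows exactly the same cycle $(ii)\Rightarrow(i)\Rightarrow(iii)\Rightarrow(ii)$ as the paper, invoking Lemmas \ref{lemma1} and \ref{lemma_int} at the same places. In $(i)\Rightarrow(iii)$ you are in fact slightly more careful than the paper: you explicitly derive the strong convexity of $\sigma$ from $\sigma\subset\langle\omega\rangle^\vee$ and $\sigma\cap\omega^\perp\subset\{0\}$ before applying Lemma \ref{lemma_int}, whereas the paper just cites the two lemmas together.

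There is, however, one unjustified step in your $(iii)\Rightarrow(ii)$: the claim ``Then $\sigma^\vee$ has full dimension'' does not follow from $\omega\in\Int(\sigma^\vee)$, since $\Int$ here denotes the \emph{relative} interior. For instance, if $n=2$ and $\sigma=\R\times\{0\}$, then $\sigma^\vee=\{0\}\times\R$ and every nonzero $\omega\in\{0\}\times\R$ lies in $\Int(\sigma^\vee)$, yet $\sigma^\vee$ is one-dimensional and $\sigma$ is not strongly convex; in this example (iii) holds while (i) and (ii) fail. The paper's own proof shares this gap (it applies Lemma \ref{lemma_int} without verifying its strong-convexity hypothesis), so the corollary as written tacitly requires $\sigma$ to be strongly convex --- which is indeed how it is used throughout the rest of the paper. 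Your argument is valid once that hypothesis is assumed; just drop the sentence about full dimension and take strong convexity as given.
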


\begin{proof}
Since $\geq_{(\o,\e_2u_2,\ldots ,\e_nu_n)}$ is a preorder that refines $\geq_\o$ we have that $ii) \Longrightarrow i)$.\\

Let us assume that 
$$\displaystyle \s\subset  \bigcap_{  \e_2,\ldots,\e_n\in\{-1,1\} } (\R^n)_{\geq_{(\o,\e_2u_{2},\ldots ,\e_nu_n)} 0}.$$
 Thus by Lemma \ref{lemma_int} and Lemma \ref{lemma1} we have that $\o\in \Int (\s^\vee)$. This shows that  $i) \Longrightarrow iii)$. \\
 
On the other hand if $\o\in \Int (\s^\vee)$ then for every $s\in\s\backslash\{0\}$ we have $\o\cdot s>0$ by Lemma \ref{lemma_int}, i.e. $0<_{\o} s$. Let $\preceq$ be a preorder refining $\leq_\o$. Then $s\preceq 0$ would imply that $s\leq_\o 0$ which is not possible. So necessarily we have that $s\succeq 0$. This shows $iii) \Longrightarrow  ii)$.
\end{proof}


\section{Algebraically  closed fields containing the field of formal power series}\label{alg_closed}

Let $\K$ be an algebraically closed characteristic zero field. A {\bf generalized Laurent series} in $n$ variables $\xi$ with rational exponents is a formal sum
\[
\xi =\sum_{\alpha\in\Q^n} \xi_\alpha x^\alpha
\]
whose coefficients $\xi_\a\in\K$. The {\bf support} of such a generalized Laurent series $\xi$ is the subset of $\R^n$ given by
\[
\Supp (\xi ):= \{ \alpha\in\Q^n\mid \xi_\alpha\neq 0\}.
\]

Given a total order $\preceq$ on $\Q^n$ which is compatible with the group structure the set of generalized Laurent series with $\preceq$-well-ordered support is an algebraically closed field (see for instance \cite{Ri2}). In this section we will describe a subfield of this field that is also algebraically closed.

A series $\xi$ is said to be a {\bf Laurent Puiseux} series when
\[
\Supp (\xi )\subset {\frac{1}{k}\Z}^n
\]
for some natural number $k$. When $k=1$ one simply says that the series is a {\bf Laurent series}.

Let $\sigma$ be a strongly convex rational cone. The set of Laurent series whose support is contained in $\sigma\cap\Z^n$ is a ring that will be denoted by $\K [[\sigma]]$. 

When $\sigma$ contains the first orthant, the ring $\K [[\sigma]]$ localized by the set of powers of $x_1\cdots x_n$ may be described in terms of support sets by
\[
	\K [[\sigma]]_{x_1\cdots x_n} =\left\{ \xi\mid \exists \gamma\in\Z^n  \text{  such that }		\Supp  (\xi )\subset (\gamma +\sigma )\cap\Z^n \right\}.
\]

Given a continuous positive  order $\preceq$ in $\R^n$, the union
\[
	\K [[\preceq]]:= \bigcup_{\sigma\ \preceq\text{-non-negative rational cone}}\K [[\sigma]]
\]
is a ring by Remark \ref{ElConoPosEsFuertConvexo} and Lemma \ref{LaUniondeConosEstaEnUncono}.

We can also describe  the localization $\K [[\preceq]]_{x_1\cdots x_n}$ in terms of support as:
$$
 	\K[[\preceq]]_{x_1\cdots x_n}=\left\{ \xi\mid \exists \gamma\in\Z^n , \sigma\subset (\R^n)_{\succeq 0}\text{ rational cone}, \  \Supp  (\xi )\subset (\gamma +\sigma)\cap\Z^n\right\}.
$$

\begin{definition}\label{field_fam}
Let $\K$ be a field and let $\Gamma$ be a totally ordered Abelian group. A collection of subsets $\mathcal{F}\subset\mathcal{P} (\Gamma )$ is a {\bf field family with respect to $\Gamma$} when the following properties hold:
\begin{enumerate}
	\item The set $\bigcup_{A\in\mathcal{F}} A$ generates $\Gamma$ as an Abelian group. \label{PropFieldFam1}
	\item The elements of $\mathcal{F}$ are well ordered. \label{PropFieldFam2}
	\item $A\in\mathcal{F},B\in\mathcal{F}$ implies $A\cup B\in\mathcal{F}$. \label{PropFieldFam3}
	\item $A\in\mathcal{F}, B\subset A$ implies $B\in\mathcal{F}$. \label{PropFieldFam4}
	\item $A\in\mathcal{F}$, $\gamma\in\Gamma$ implies $\gamma + A\in\mathcal{F}$. \label{PropFieldFam5}
	\item $A\in\mathcal{F}$, $A\subset \Gamma_{\geq 0}$ implies $\langle A\rangle \in\mathcal{F}$. \label{PropFieldFam6}
\end{enumerate}
\end{definition}

The concept of field family was introduced by F. J. Rayner in 1968 \cite{Rayner:1968}. This concept is used in \cite{Saavedra:2015} to extend McDonald's theorem \cite{McDonald:1995} to positive characteristic. The main use of  field families is the following 	
	 theorem:
	 
\begin{theorem}[Theorem 2 \cite{Rayner:1974}]\label{Rayner74}
	Let $\K$ be an algebraically closed field of characteristic zero, let $\Gamma$ be an ordered group and let $\Delta$ be the divisible envelope of $\Gamma$. Let $\mathcal{F}(\Delta )$ be any field family with respect to $\Delta$. The set of power series with coefficients in $\K$ whose support is an element of $\mathcal{F}(\Delta )$ is an algebraically closed field.
\end{theorem}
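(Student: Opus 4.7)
My plan is to mirror the classical Mal'cev–Neumann construction of an algebraic closure of a Hahn series field, threading the six field-family axioms through each step so that every series produced has support inside a single element of $\mathcal{F}(\Delta)$. Denote the set in question by $R$.

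\textbf{Step 1: $R$ is a field.} Given $\xi,\eta\in R$, axiom (3) provides a common $A\in\mathcal{F}(\Delta)$ containing both supports, and closure under sum is then immediate from (4). For the product, the inclusion $\Supp(\xi\eta)\subset\Supp(\xi)+\Supp(\eta)$ yields a well-ordered set by Neumann's lemma on sums of well-ordered subsets of a totally ordered abelian group, and after translating by (5) so that $A\subset\Delta_{\geq 0}$ we have $A+A\subset\langle A\rangle\in\mathcal{F}$ by (6). For inverses, factor a nonzero $\xi\in R$ as $c\,x^{\gamma_0}(1-\eta)$ with $\gamma_0=\min\Supp(\xi)$, so that $\Supp(\eta)\subset\Delta_{>0}$ and, after a translation from (5), $\Supp(\eta)\in\mathcal{F}$. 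The formal geometric series $(1-\eta)^{-1}=\sum_{k\geq 0}\eta^k$ has support inside $\{0\}\cup\langle\Supp(\eta)\rangle$, which lies in $\mathcal{F}$ again by (6); the sum converges in the Hahn-series sense because $\Supp(\eta)$ is well-ordered and strictly positive.

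\textbf{Step 2: $R$ is algebraically closed.} Let $P(T)=T^n+a_{n-1}T^{n-1}+\cdots+a_0\in R[T]$ be nonconstant. Pick $A\in\mathcal{F}(\Delta)$ containing all $\Supp(a_i)$, translated so that $A\subset\Delta_{\geq 0}$. I run the Newton polygon algorithm with respect to the ordered group $\Delta$: the lowest edge of the polygon of $P$ has a well-defined slope $\gamma_0\in\Delta$, and its associated symbol polynomial over $\K$ splits because $\K$ is algebraically closed. Pick a root $c_0\in\K$, substitute $T\mapsto c_0 x^{\gamma_0}+T'$, and iterate, producing a formal expansion $\xi=\sum_j c_j x^{\gamma_j}$. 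Using characteristic zero (so that a Hensel-type refinement pins down the root once its leading term is fixed), the sequence $(\gamma_j)$ is strictly increasing and well-ordered, and the partial sums converge to a genuine root of $P$ in the ambient Hahn series field over $\Delta$. Each $\gamma_j$ is a $\Z_{\geq 0}$-combination of elements of $A$ and of previously chosen $\gamma_i$'s, so $\Supp(\xi)\subset\langle A\rangle$, and this set lies in $\mathcal{F}$ by (6). Hence $\xi\in R$.

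\textbf{Main obstacle.} The subtle point is not any individual axiom but the cumulative bookkeeping: at no point during the infinite Newton polygon iteration may the accumulating support leave a single element of $\mathcal{F}$. Axiom (6) is decisive here, since without semigroup closure the iteration could generate exponents in ever-larger subsets of $\Delta$. A secondary difficulty is justifying the convergence of the iteration — one must show that for every $\delta\in\Delta$ only finitely many $\gamma_j$ satisfy $\gamma_j\preceq\delta$ — which follows from a degree-drop argument on the Newton polygon combined with well-orderedness of $A$. These two ingredients transport the Mal'cev–Neumann argument from the full Hahn series field into the subfield cut out by $\mathcal{F}$.
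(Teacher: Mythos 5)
First, note that the paper does not prove this statement at all: it is quoted verbatim as Theorem 2 of Rayner's 1974 paper and used as a black box, so there is no internal proof to compare your argument with. Judged on its own, your Step 1 is essentially fine, but Step 2 contains a genuine gap at exactly the point where the theorem is nontrivial. You assert that ``each $\gamma_j$ is a $\Z_{\geq 0}$-combination of elements of $A$ and of previously chosen $\gamma_i$'s, so $\Supp(\xi)\subset\langle A\rangle$.'' This is false: the slope of an edge of the Newton polygon joining $(i,v(a_i))$ to $(j,v(a_j))$ is $\bigl(v(a_i)-v(a_j)\bigr)/(j-i)$, so the exponents produced by the iteration involve both differences and division by integers up to $\deg P$. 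This is precisely why the theorem is stated over the divisible envelope $\Delta$ of $\Gamma$ rather than over $\Gamma$ itself, and why in the model case $\Gamma=\Z$ the roots are genuine Puiseux series: already for $P(T)=T^2-x_1$ the coefficients have support in $\Z^n_{\geq 0}$ while the root has support $\{(1/2,0,\ldots,0)\}$, which is not contained in $\langle \Z^n_{\geq 0}\rangle$. So the conclusion $\Supp(\xi)\in\mathcal{F}(\Delta)$ does not follow from axiom (6) as you claim.

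Repairing this is the real content of Rayner's theorem. One must show that the denominators occurring along the entire (possibly transfinite) iteration are uniformly bounded by some $N\leq\deg P$ --- for instance via the ramification bound $e\leq [\,\text{extension degree}\,]$ for the induced valuation, or by showing the Newton polygon process eventually stabilizes to a fixed slope lattice --- so that $\Supp(\xi)\subset\gamma+\bigl\langle \tfrac{1}{N}A\cup F\bigr\rangle$ for a finite set $F$, and one must then verify that such a set lies in $\mathcal{F}(\Delta)$; the six axioms of Definition \ref{field_fam} do not visibly provide closure under $A\mapsto\tfrac{1}{N}A$, so this step requires either an appeal to axiom (1) together with a more careful bookkeeping, or to the precise form of Rayner's original axioms. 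A secondary, more standard issue is that the support of a root in a Hahn field need not have order type $\omega$, so ``partial sums converge to a genuine root'' needs either a transfinite version of the iteration or a prior appeal to the algebraic closedness of the ambient field $\K((\Delta))$ followed by an analysis of the root's support; your sketch acknowledges but does not resolve this.
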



Given a continuous positive  order $\preceq$ in $\R^n$, consider the family $\mathcal{F}_\preceq \left(\Z^n\right)\subset\mathcal{P}\left(\Z^n\right)$ given by 
\[
\mathcal{F}_\preceq \left(\Z^n\right) := \left\{ A\subset \Z^n\mid  \exists \gamma\in\Z^n, \sigma\subset (\R^n)_{\succeq 0}\text{ rational cone}, \text{ with } A\subset \gamma +\sigma\right\}.
\]

With this notation we can write
\[
	\K [[\preceq]]_{x_1\cdots x_n}=\{\xi\mid \Supp  (\xi )\in \mathcal{F}_\preceq \left(\Z^n\right)\}.
\]
\begin{proposition}\label{EsFieldFamilyEnZ}
	Given a continuous positive order $\preceq$ in $\R^n$, the family $\mathcal{F}_\preceq\left(\Z^n\right)$ is a field family with respect to $\Z^n$.
\end{proposition}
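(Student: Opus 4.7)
\textbf{The plan is} to verify the six axioms of Definition~\ref{field_fam} one by one. Axioms~(1)--(5) follow quickly from the definitions together with the preparatory lemmas. For~(1), each singleton $\{\pm e^{(i)}\}$ lies in $\mathcal{F}_\preceq(\Z^n)$ since $\pm e^{(i)} \in \pm e^{(i)}+\R_{\geq 0}^n$ and $\R_{\geq 0}^n$ is $\preceq$-non-negative by positivity of $\preceq$, and these singletons generate $\Z^n$ as an Abelian group. For~(2), any $A \subset \gamma+\sigma$ inherits well-orderedness from Lemma~\ref{LosEnterosInterseccionUnConoEstaBienOrenado} applied to $\sigma$: compatibility of $\preceq$ with addition transfers well-orderedness from $\sigma \cap \Z^n$ to the translate $\gamma+(\sigma \cap \Z^n)$, and any subset of a well-ordered set is well-ordered. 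Axiom~(3) is Lemma~\ref{LaUniondeConosTrasladadosEstaEnUnconoTrasladado}; axioms~(4) and~(5) are immediate from the definition of $\mathcal{F}_\preceq(\Z^n)$ (a subset of $\gamma+\sigma$ sits in $\gamma+\sigma$, and $\gamma'+A \subset (\gamma'+\gamma)+\sigma$).

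The content is axiom~(6). Given $A \in \mathcal{F}_\preceq(\Z^n)$ with $A \subset (\Z^n)_{\succeq 0}$ nonempty, I plan to show that $\langle A\rangle$ is contained in a single $\preceq$-non-negative rational cone, so that one can take $\gamma''=0$. Starting from $A \subset \gamma+\sigma$, use Gordan's lemma to obtain a Hilbert basis $v^{(1)},\ldots,v^{(r)}$ of the finitely generated semigroup $\sigma \cap \Z^n$, and consider the surjection $\pi \colon \Z_{\geq 0}^r \to \sigma \cap \Z^n$, $(c_j)\mapsto \sum_j c_j v^{(j)}$. Apply Dickson's lemma to $\pi^{-1}(A-\gamma) \subset \Z_{\geq 0}^r$, obtaining finitely many componentwise-minimal elements $p_1,\ldots,p_s$; set $a_i := \pi(p_i)+\gamma \in A$. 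For any $a \in A$ and any lift $q \in \pi^{-1}(a-\gamma)$, one has $q \geq p_i$ componentwise for some $i$, whence $a-a_i = \pi(q-p_i) \in \sigma$. This produces the finite covering $A \subset \bigcup_{i=1}^s (a_i+\sigma)$.

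With this covering, every element of $\langle A\rangle$ is a finite sum $\sum_j a'_j$ with $a'_j = a_{l_j}+s_j$ for some $l_j \in \{1,\ldots,s\}$ and $s_j \in \sigma$, so it lies in $\mathrm{cone}(a_1,\ldots,a_s)+\sigma$. Define $\sigma'' := \mathrm{cone}(a_1,\ldots,a_s,v^{(1)},\ldots,v^{(r)})$: this is a rational polyhedral cone, all of whose generators are $\succeq 0$ (the $a_i$'s by the hypothesis $A \subset (\Z^n)_{\succeq 0}$, the $v^{(j)}$'s because $\sigma$ is $\preceq$-non-negative), so $\sigma''$ is itself $\preceq$-non-negative. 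Hence $\langle A\rangle \subset 0+\sigma'' \in \mathcal{F}_\preceq(\Z^n)$, proving~(6). The main obstacle is producing the finite covering $A \subset \bigcup(a_i+\sigma)$ from the a priori infinite set $A$; this requires combining Gordan's lemma (to present $\sigma \cap \Z^n$ as the image of a coordinate semigroup) with the classical Dickson lemma, with the minor subtlety that the lift through $\pi$ is non-canonical but minimality of the $p_i$ still yields the desired covering.
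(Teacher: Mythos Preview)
Your verification of axioms (1)--(5) matches the paper's proof almost verbatim: the paper invokes $\Z_{\geq 0}^n \in \mathcal{F}_\preceq(\Z^n)$ for (1), Lemma~\ref{LosEnterosInterseccionUnConoEstaBienOrenado} for (2), Lemma~\ref{LaUniondeConosTrasladadosEstaEnUnconoTrasladado} for (3), and the definition of $\mathcal{F}_\preceq(\Z^n)$ for (4) and (5). Your choice of singletons $\{\pm e^{(i)}\}$ in place of $\Z_{\geq 0}^n$ for (1) is a cosmetic variation.

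For axiom (6) the paper is terse: it simply says the property ``follows from the definition of a polyhedral cone and the fact that continuous orders respect the $\R$-vector space structure.'' Your argument fleshes this out substantially and correctly. The paper's one-line justification presumably has in mind the observation that a cone generated by finitely many $\preceq$-non-negative vectors is itself $\preceq$-non-negative (this is where continuity of $\preceq$ enters, via compatibility with positive scalar multiplication), so that once $A$ is covered by finitely many translates $a_i + \sigma$ with $a_i \in A$, the cone $\sigma'' = \mathrm{cone}(a_1,\ldots,a_s)+\sigma$ does the job. What the paper leaves implicit, and you make explicit, is why such a finite covering exists when $A$ is infinite; your route through Gordan's lemma (to linearize $\sigma \cap \Z^n$) followed by Dickson's lemma in $\Z_{\geq 0}^r$ is a clean and standard way to produce it. The two arguments share the same core idea; yours supplies the missing combinatorial step that the paper's sketch passes over.
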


\begin{proof} We have to check that the properties of Definition \ref{field_fam} are satisfied.\\
	Property \eqref{PropFieldFam1} follows from the fact that ${\Z_{\geq 0}}^n$ is an element of $\mathcal{F}_\preceq \left(\Z^n\right)$.
	Property \eqref{PropFieldFam2} has been shown in Lemma \ref{LosEnterosInterseccionUnConoEstaBienOrenado}.
	Property \eqref{PropFieldFam3} is direct consequence of Lemma \ref{LaUniondeConosTrasladadosEstaEnUnconoTrasladado}.
	Properties \eqref{PropFieldFam4} and \eqref{PropFieldFam5} follow from the  definition of $\mathcal{F}_\preceq \left(\Z^n\right)$. Property \eqref{PropFieldFam6} follows from the definition of a polyhedral cone and the fact that continuous orders respect the $\R$-vector space structure.
\end{proof}

Now consider the family $\mathcal{F}_\preceq \left(\Q^n\right)\subset\mathcal{P}\left(\Q^n\right)$ given by 
\[
\mathcal{F}_\preceq \left(\Q^n\right) := \left\{ A\subset \Q^n\mid  \exists k\in\Z_{> 0}  \text{ with } k A\in \mathcal{F}_\preceq \left(\Z^n\right) \right\}
\]
where $kA:= \{ k\alpha\mid \alpha\in A\}$.

\begin{proposition}\label{EsFieldFalimilyQ}
	Given a continuous positive  order $\preceq$ in $\R^n$, the family $\mathcal{F}_\preceq\left(\Q^n\right)$ is a field family with respect to $\Q^n$.
\end{proposition}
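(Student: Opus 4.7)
The plan is to verify each of the six defining properties of a field family (Definition \ref{field_fam}) for $\mathcal{F}_\preceq(\Q^n)$ by pulling back to $\mathcal{F}_\preceq(\Z^n)$, where the corresponding properties are already established in Proposition \ref{EsFieldFamilyEnZ}. The crucial bridging observation is that, for any positive integer $k$, a rational cone $\sigma$ satisfies $k\sigma = \sigma$, and the map $\alpha \mapsto k\alpha$ is an order-preserving bijection on $(\R^n, \preceq)$, because $\preceq$ is compatible with the additive group structure. Together these ensure that whenever $kA \subset \gamma + \sigma$ with $\sigma$ a $\preceq$-non-negative rational cone and $\gamma \in \Z^n$, the same containment persists (with the translation rescaled) after multiplying by any further positive integer.

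Property (1) is immediate, since $\Z^n_{\geq 0} \in \mathcal{F}_\preceq(\Z^n) \subset \mathcal{F}_\preceq(\Q^n)$ (take $k=1$) and already generates $\Q^n$ as an Abelian group. Property (2), that elements are well-ordered, follows by transporting the well-ordering of $kA \in \mathcal{F}_\preceq(\Z^n)$ back along the order-preserving bijection $\alpha \mapsto k\alpha$.

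For properties (3)--(5), given $A, B \in \mathcal{F}_\preceq(\Q^n)$ with witnessing denominators $k_A, k_B$, I would let $k$ be their least common multiple (also absorbing a denominator of $\gamma$ when verifying (5)), so that by the scaling remark above $kA$ and $kB$ (respectively $k\gamma + kA$) lie in $\mathcal{F}_\preceq(\Z^n)$, and then invoke the corresponding property there directly. Property (4) is the cleanest: $kB \subset kA$ lies in $\mathcal{F}_\preceq(\Z^n)$ by downward closure of the latter. For property (6), I would use the identity $\langle kA \rangle = k\langle A \rangle$ between the subsemigroup generated by a set and its rescaling, which is immediate from the definition, to reduce the claim to property (6) for $\mathcal{F}_\preceq(\Z^n)$; the hypothesis $A \subset \Q^n_{\succeq 0}$ transports to $kA \subset \Z^n_{\succeq 0}$ since positive-integer scaling preserves $\preceq$-non-negativity.

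I do not expect any genuine obstacle: the whole proof is bookkeeping with scaling factors, which is essentially the reason for defining $\mathcal{F}_\preceq(\Q^n)$ as the rescaled version of $\mathcal{F}_\preceq(\Z^n)$. The only point that merits explicit attention is the identity $\langle kA \rangle = k\langle A \rangle$ used in (6), and the choice of common denominator in (5) that simultaneously clears $A$ and $\gamma$.
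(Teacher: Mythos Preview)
Your approach is essentially the paper's --- reduce everything to Proposition \ref{EsFieldFamilyEnZ} via scaling --- and properties (2)--(6) are handled correctly, with more detail than the paper gives.

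However, your argument for property (1) is wrong: $\Z^n_{\geq 0}$ does \emph{not} generate $\Q^n$ as an Abelian group; the subgroup it generates is only $\Z^n$. You need sets in $\mathcal{F}_\preceq(\Q^n)$ whose union has full denominator content. The paper's fix is to observe that $\frac{1}{k}\Z^n_{\geq 0}\in\mathcal{F}_\preceq(\Q^n)$ for every $k\in\Z_{>0}$ (since $k\cdot\frac{1}{k}\Z^n_{\geq 0}=\Z^n_{\geq 0}\in\mathcal{F}_\preceq(\Z^n)$), and the union $\bigcup_{k\geq 1}\frac{1}{k}\Z^n_{\geq 0}=\Q^n_{\geq 0}$ certainly generates $\Q^n$. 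With that correction your proof goes through.
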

\begin{proof}
	Property \eqref{PropFieldFam1} of Definition \ref{field_fam} follows from the fact that, for all $k\in\Z_{> 0}$, ${{\frac{1}{k}\Z^n_{\geq 0}}}$ is an element of $\mathcal{F}_\preceq \left(\Q^n\right)$, and the set $\bigcup_{k\in\Z_{> 0}}{{\frac{1}{k}\Z^n_{\geq 0}}}$  generates $\Q^n$.\\
	The remaining properties follow directly from Proposition \ref{EsFieldFamilyEnZ}.
\end{proof}

Let $\mathscr{S}^\K_\preceq$  be the set of Laurent Puiseux series with coefficients in $\K$ whose support is an element of $\mathcal{F}_\preceq (\Q^n)$, i.e.

	\[
		\mathscr{S}^\K_\preceq =\left\{ \xi\mid \exists k\in \Z_{> 0}, \gamma\in\Z^n,  \sigma\subset (\R^n)_{\succeq 0} \text{ rational cone, } 		\Supp  (\xi )\subset (\gamma +\sigma )\cap \frac{1}{k}\Z^n \right\}.
	\]

Then we can state the main result of this part:

\begin{theorem}\label{TeoremaAlgCerrado}
 Let $\K$ be an algebraically closed field of characteristic zero. 	Given a continuous positive  order $\preceq$ in $\R^n$, the set $\mathscr{S}^\K_\preceq$ is an algebraically closed field.
\end{theorem}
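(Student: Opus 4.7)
The plan is to reduce the statement to a direct invocation of Rayner's theorem (Theorem \ref{Rayner74}), since essentially all the preparatory work has already been done in Propositions \ref{EsFieldFamilyEnZ} and \ref{EsFieldFalimilyQ}.

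First I would set up the correct ordered group. Take $\Gamma=\Z^n$, whose divisible envelope is $\Delta=\Q^n$. The continuous positive order $\preceq$ on $\R^n$ restricts to a total order on $\Q^n$ compatible with the group structure (this uses Theorem \ref{rob} and the observation after it, which is why we only need to work with continuous orders, i.e.\ lexicographic orders induced by tuples of vectors). Then by the description of $\mathscr{S}^\K_\preceq$ given just before the theorem, we have
$$\mathscr{S}^\K_\preceq=\{\xi\mid \Supp(\xi)\in\mathcal{F}_\preceq(\Q^n)\}.$$

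Next I would invoke Proposition \ref{EsFieldFalimilyQ}, which tells us that $\mathcal{F}_\preceq(\Q^n)$ is a field family with respect to $\Q^n$, the divisible envelope of $\Z^n$. Since $\K$ is assumed algebraically closed of characteristic zero, the hypotheses of Theorem \ref{Rayner74} are met with $\Gamma=\Z^n$, $\Delta=\Q^n$, and $\mathcal{F}(\Delta)=\mathcal{F}_\preceq(\Q^n)$. Applying Rayner's theorem immediately yields that the set of power series with coefficients in $\K$ whose support lies in $\mathcal{F}_\preceq(\Q^n)$ is an algebraically closed field. This set is precisely $\mathscr{S}^\K_\preceq$.

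The only thing left to verify is the inclusion $\K(\!(x)\!)\subset \mathscr{S}^\K_\preceq$ suggested by the introductory version of the theorem. Since $\preceq$ is positive, the first orthant ${\R_{\geq 0}}^n$ is a $\preceq$-non-negative rational cone, so $\K[[x]]\subset \mathscr{S}^\K_\preceq$; taking $k=1$ and any $\gamma\in\Z^n$ with $-\gamma\in{\Z_{\geq 0}}^n$ shows that any finite product of negative powers of the $x_i$ is also in $\mathscr{S}^\K_\preceq$, so $\K[[x]]_{x_1\cdots x_n}\subset \mathscr{S}^\K_\preceq$, and taking fractions gives $\K(\!(x)\!)\subset \mathscr{S}^\K_\preceq$.

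There is no real obstacle: all technical lemmas (well-ordering of $\sigma\cap\Z^n$ in Lemma \ref{LosEnterosInterseccionUnConoEstaBienOrenado}, closure under unions via Lemma \ref{LaUniondeConosTrasladadosEstaEnUnconoTrasladado}, and closure under the semigroup operation) have been verified in the propositions preceding the theorem. The only conceptual point worth underlining in the proof is that the passage from $\Z^n$ to $\Q^n$ in Proposition \ref{EsFieldFalimilyQ} is exactly what allows us to handle Puiseux (rather than only Laurent) exponents, which is necessary because even in one variable the algebraic closure of $\K(\!(x)\!)$ requires fractional exponents.
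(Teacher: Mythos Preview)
Your proposal is correct and follows exactly the same route as the paper: invoke Proposition~\ref{EsFieldFalimilyQ} to see that $\mathcal{F}_\preceq(\Q^n)$ is a field family with respect to $\Q^n$, then apply Rayner's Theorem~\ref{Rayner74}. The paper's proof is the one-line ``direct consequence of Theorem~\ref{Rayner74} and Proposition~\ref{EsFieldFalimilyQ}''; you have simply unpacked this by making explicit the choice $\Gamma=\Z^n$, $\Delta=\Q^n$, and by verifying the inclusion $\K(\!(x)\!)\subset\mathscr{S}^\K_\preceq$ stated in the introduction.
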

\begin{proof}
	This is a direct consequence of Theorem \ref{Rayner74} and Proposition \ref{EsFieldFalimilyQ}.
\end{proof}

\begin{corollary}\label{SiEsDesoporteBienOrdenado}
	Let $P(T)\in\K [[x]][T]$ be a polynomial in $T$, let $\preceq$ be a continuous positive order in $\Q^n$ and let $\xi$ be a root of $P(T)$ in the field of Laurent Puiseux series with $\preceq$-well-ordered support. Then $\xi$ is an element of $\mathscr{S}^\K_\preceq$
\end{corollary}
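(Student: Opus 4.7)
The plan is to use Theorem~\ref{TeoremaAlgCerrado} together with the fact that a polynomial factors uniquely into linear factors over any algebraically closed field, and that this factorization persists inside any larger integral domain.

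First I would verify that $\K[[x]]$ sits inside $\mathscr{S}^\K_\preceq$. Since $\preceq$ is positive, the first orthant ${\R_{\geq 0}}^n$ is a $\preceq$-non-negative rational cone, so any element of $\K[[x]]$ has support contained in $0+{\Z_{\geq 0}}^n$, which places it in $\mathcal{F}_\preceq(\Q^n)$. In particular, the polynomial $P(T)$ can be regarded as an element of $\mathscr{S}^\K_\preceq[T]$.

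Next, by Theorem~\ref{TeoremaAlgCerrado} the ring $\mathscr{S}^\K_\preceq$ is an algebraically closed field, so $P(T)$ factors as
\[
P(T)=c\prod_{i=1}^{d}(T-\eta_i)
\]
with $c\in\mathscr{S}^\K_\preceq$ and $\eta_1,\ldots,\eta_d\in\mathscr{S}^\K_\preceq$. Now let $L$ denote the field of generalized Laurent series with $\preceq$-well-ordered support, which is known to be a field (and contains $\mathscr{S}^\K_\preceq$ as a subfield, since any element of $\mathscr{S}^\K_\preceq$ has support in a translate of a $\preceq$-non-negative rational cone, hence in a $\preceq$-well-ordered set by Lemma~\ref{LosEnterosInterseccionUnConoEstaBienOrenado}). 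The factorization of $P(T)$ displayed above, being an identity in $\mathscr{S}^\K_\preceq[T]$, remains valid in $L[T]$. Since $L$ is an integral domain, evaluating at the root $\xi\in L$ forces $\xi-\eta_i=0$ for some $i$, so $\xi=\eta_i\in\mathscr{S}^\K_\preceq$.

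There is no real obstacle here; the corollary is essentially a formal consequence of Theorem~\ref{TeoremaAlgCerrado} plus the observation that splitting of a polynomial into linear factors over an algebraically closed subfield captures all roots in any larger field. The only point to be careful about is to ensure that $\mathscr{S}^\K_\preceq$ really does embed into the bigger field $L$ of generalized Laurent series with $\preceq$-well-ordered support, which is precisely what Lemma~\ref{LosEnterosInterseccionUnConoEstaBienOrenado} guarantees.
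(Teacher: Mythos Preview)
Your proof is correct and follows essentially the same approach as the paper. The paper's proof simply records the chain of inclusions $\K[[x]]\subset\mathscr{S}^\K_\preceq\subset\mathcal{W}^\K(\Q^n,\preceq)$ and declares the corollary a direct consequence of Theorem~\ref{TeoremaAlgCerrado}; you have spelled out explicitly why that chain plus algebraic closedness forces any root in the ambient field to lie in $\mathscr{S}^\K_\preceq$, which is exactly the intended argument.
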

\begin{proof}
 This is a direct consequence of Theorem \ref{TeoremaAlgCerrado} and the inclusion
 \begin{equation}\label{InclusionEnBienOrdenado}
 	\K [[x]]\subset \mathscr{S}^\K_\preceq\subset \mathcal{W}^\K(\Q^n,\preceq)
 \end{equation}
 
 where $\mathcal{W}^\K(\Q^n,\preceq)$ denotes field of Laurent Puiseux series with $\preceq$-well-ordered support.
\end{proof}


\begin{remark} Taking for $\preceq$ the order  $\leq_\omega$ with $\omega$ a vector with rationally independent coordinates, we recover the main theorems of \cite{McDonald:1995}, \cite{Gonzalez:2000} and \cite{ArocaIlardi:2009} as corollaries of Theorem \ref{TeoremaAlgCerrado}. If we take for $\preceq$ the order  $\leq_{(u_1,\ldots ,u_n)}$ where $u_1,\ldots ,u_n\in \Q^n$ are $\Q$-linearly independent, the main result in \cite{SotoVicente} is a particular case of Theorem \ref{TeoremaAlgCerrado}.
\end{remark}

\begin{remark}
Let $\preceq$ be a continuous positive  order in $\R^n$. Inclusion (\ref{InclusionEnBienOrdenado}) implies that the map
$$\nu_\preceq :\mathscr{S}^\K_\preceq \to \R^n\cup\{\infty\}$$
defined by $\nu_\preceq(\varphi)=\min \Supp\varphi$
 and $\nu_\preceq(0)=\infty$ is a valuation. 
\end{remark}

\begin{remark}\label{hens_valued_field}
Let $\preceq$ be a continuous positive  order in $\R^n$. By Theorem \ref{rob} there exist $s$ vectors in $\R^n$, with $s\leq n$, such that $\preceq = \leq _{(u_1,\ldots,u_s)}$. Then the map
$$\nu_{(u_1,\ldots,u_s)} :\mathscr{S}^\K_\preceq \to \R^s\cup\{\infty\}$$
defined by $\nu_{(u_1,\ldots,u_s)}(\varphi)=\min\{\texttt{p}_{u_1,\ldots,u_s}(\a)\mid a_\a\neq 0\}$ for $\xi=\sum_{\a\in\Q^n}\xi_\a x^\a\neq 0$ and $\nu_{(u_1,\ldots,u_s)}(0)=\infty$ is a valuation. \\
If $\preceq=\leq_{(v_1,\ldots,v_t)}$ for some vectors $v_1,\ldots,v_t$ then $\nu_{(u_1,\ldots,u_s)}$ and $\nu_{(v_1,\ldots,v_t)}$ are equivalent valuations. 

Since $\mathscr{S}_\preceq$ is an algebraically closed field then $(\mathscr{S}_\preceq,\nu_{(u_1,\ldots,u_s)})$ is a Henselian valued field and its value group is an ordered subgroup of $(\R^s,\geq_{\text{lex}})$.
\end{remark}


\section{The maximal dual cone}\label{dual_cone}

In this part $\sigma$ will denote a strongly convex cone containing the first orthant,   $\xi\in \K [[\sigma]]_{x_1\cdots x_n}$ will be algebraic over $\K[[x]]$ where $\K$ is a characteristic zero field. We denote by $P\in \K[[x]][T]$  the minimal polynomial of $\xi$ and, for any continuous positive order $\preceq$, let $\xi_1^\preceq,\ldots, \xi_d^\preceq$ denote the roots of $P(T)$ in $\mathscr{S}^\K_\preceq$.  We set
	\[
		\tau_0 (\xi ):= \left\{\omega\in {\R_{> 0}}^n\mid \text{ for all }\preceq\text{ that refines }\leq_\omega ,\ \exists i \text{ such that } \xi = \xi_i^\preceq\right\}
	\]
		\[
		\tau_1 (\xi ):= \left\{\omega\in {\R_{> 0}}^n\mid\xi \neq\xi_i^\preceq, \text{ for all }\preceq\text{ that refines }\leq_\omega, \ \forall i= 1,\ldots ,d \right\}
	\]

The aim of this part is to prove the second main result of this work. This one states that $\t_0(\xi)$ is the "maximal dual cone" of $\Supp(\xi)$ in the following sense (see also Lemma \ref{IntDeDualEnTau0}): when $\xi\notin\K[[x]]_{x_1\cdots x_n}$, for every $\o\in {\R_{> 0}}^n$ belonging to the boundary of $\t_0(\xi)$ there exists $k\in\R$ such that all but a finite number of elements of $\Supp(\xi)$ will be in the set $\{\a\in\Z^n\mid \a\cdot\o\geq k\}$ and the set $\{\a\in\Z^n\mid \a\cdot\o=k\}$ contains an infinite number of elements of $\Supp(\xi)$. The strategy of the proof is based on the fact that $\t_0(\xi)$ and $\t_1(\xi)$ are disjoint open subsets of ${\R_{>0}}^n$ and on the characterizations of $\t_0(\xi)$ and $\t_1(\xi)$ given in  Lemmas \ref{characterization_tau_0} and \ref{characterization_tau_1}.\\

\begin{lemma}\label{IntDeDualEnTau0}
	Let $\sigma$ be a strongly convex cone containing the first orthant and let  $\xi\in \K [[\sigma]]_{x_1\cdots x_n}$ be algebraic over $\K[[x]]_{x_1\cdots x_n}$. Then we have that
	\[
		\int  (\sigma^\vee)\subset \tau_0(\xi ).
	\]

\end{lemma}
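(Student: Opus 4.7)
The plan is to verify the inclusion pointwise: take any $\omega \in \operatorname{Int}(\sigma^\vee)$ and any continuous positive order $\preceq$ refining $\leq_\omega$, and show that $\xi$ coincides with one of the roots $\xi_i^\preceq$. The argument will rest on Corollary \ref{PositParaTodoRef}, the inclusion $\K[[\sigma]]_{x_1\cdots x_n} \subset \mathscr{S}^\K_\preceq$, and the algebraic closure of $\mathscr{S}^\K_\preceq$ from Theorem \ref{TeoremaAlgCerrado}.

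First, I would check that the set-up is consistent: since $\omega \in \operatorname{Int}(\sigma^\vee)$ and $\sigma$ contains the first orthant, we automatically have $\omega \in {\R_{>0}}^n$, so the question of whether $\omega \in \tau_0(\xi)$ is meaningful. Moreover, any continuous order $\preceq$ refining $\leq_\omega$ is automatically positive: each canonical basis vector $e^{(i)}$ satisfies $\omega \cdot e^{(i)} > 0$, hence $0 <_\omega e^{(i)}$, and refinement together with totality of $\preceq$ forces $0 \prec e^{(i)}$. So the field $\mathscr{S}^\K_\preceq$ and the roots $\xi_i^\preceq$ are well-defined.

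Next, fix such a $\preceq$. By Corollary \ref{PositParaTodoRef} applied to the cone $\sigma$, the condition $\omega \in \operatorname{Int}(\sigma^\vee)$ implies $\sigma \subset (\R^n)_{\succeq 0}$. Since $\sigma$ is a rational cone and $\xi \in \K[[\sigma]]_{x_1\cdots x_n}$, there exists $\gamma \in \Z^n$ with $\operatorname{Supp}(\xi) \subset (\gamma + \sigma) \cap \Z^n$, so by the description of $\mathscr{S}^\K_\preceq$ preceding Theorem \ref{TeoremaAlgCerrado} (with $k=1$) we get $\xi \in \mathscr{S}^\K_\preceq$.

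Finally, $\xi$ is a root of its minimal polynomial $P(T) \in \K[[x]][T]$, which has degree $d$. Because $\mathscr{S}^\K_\preceq$ is algebraically closed and contains $\K[[x]]$, the polynomial $P$ splits into $d$ linear factors in $\mathscr{S}^\K_\preceq$, and by definition $\xi_1^\preceq, \ldots, \xi_d^\preceq$ are exactly these roots. Since $\xi \in \mathscr{S}^\K_\preceq$ is a root of $P$, we conclude $\xi = \xi_i^\preceq$ for some $i$, which means $\omega \in \tau_0(\xi)$. There is no real obstacle here; the proof is essentially a direct assembly of Corollary \ref{PositParaTodoRef} (to pass from dual cone interiors to $\preceq$-non-negativity) and Theorem \ref{TeoremaAlgCerrado} (to identify $\xi$ among the roots in $\mathscr{S}^\K_\preceq$).
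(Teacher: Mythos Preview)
Your proof is correct and follows essentially the same route as the paper: both use Corollary~\ref{PositParaTodoRef} to deduce that $\sigma$ is $\preceq$-non-negative for every continuous positive order $\preceq$ refining $\leq_\omega$, hence $\xi\in\mathscr{S}^\K_\preceq$, and then identify $\xi$ among the roots $\xi_i^\preceq$. Your version is simply more explicit about the consistency checks ($\omega\in{\R_{>0}}^n$ and positivity of $\preceq$), while the paper additionally remarks via Lemma~\ref{DualFuertConvDimMax} that $\int(\sigma^\vee)=\Int(\sigma^\vee)$ since $\sigma^\vee$ is full dimensional.
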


\begin{proof}
By Lemma \ref{DualFuertConvDimMax}  we have that $\int ({\sigma}^\vee )=\Int({\sigma}^\vee )$.
	Given $\omega\in \Int  (\sigma^\vee)$, by Corollary \ref{PositParaTodoRef} we have that $\sigma\subset \bigcap_{\preceq\text{ refines } \leq_\omega} {(\R^n)}_{\succeq 0}$. Hence $\xi\in\mathscr{S}_\preceq$ is a root of $P$ for any order $\preceq$ that refines $\leq_\omega$.
\end{proof}

\begin{lemma}\label{SiEstaEntodoslosanillosquerefinan}
	Let $\xi$ be a Laurent series and let $\omega$ be a non-zero vector in $\R^n$. Suppose that $\xi\in \mathscr{S}_\preceq$ for any continuous positive order $\preceq$ refining $\leq_\omega$. Then there exists a cone $\sigma_0$ and $\gamma_0\in\Z^n$ such that $\omega\in\Int ({\sigma_0}^\vee)$ and $\Supp (\xi )\subset \gamma_0 +\sigma_0$.
\end{lemma}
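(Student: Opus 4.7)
The plan is to exploit Corollary \ref{PositParaTodoRef} to replace the uncountable family of continuous positive orders refining $\leq_\o$ by a finite list of $2^{n-1}$ orders, and then to combine the resulting support inclusions by iterating Lemma \ref{SiEstaContenidoEnDosDesplazadosDeCono}.

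Concretely, I would extend $\o$ to a basis $\o,u_2,\ldots,u_n$ of $\R^n$ and, for each $\e\in\{-1,1\}^{n-1}$, form the continuous preorder $\preceq_\e := \leq_{(\o,\e_2u_2,\ldots,\e_nu_n)}$. Each $\preceq_\e$ refines $\leq_\o$ by construction, and in the intended setting $\o\in{\R_{>0}}^n$ (which is the case every time this lemma is invoked in Section \ref{dual_cone}) each $\preceq_\e$ is automatically a positive order. The hypothesis then produces, for every $\e$, data $\g_\e\in\Z^n$ and a $\preceq_\e$-non-negative rational cone $\s_\e$ with $\Supp(\xi)\subset\g_\e+\s_\e$. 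Applying Lemma \ref{LaUniondeConosEstaEnUncono} to $\s_\e$ and the first orthant (which is $\preceq_\e$-non-negative because $\preceq_\e$ is positive), I enlarge each $\s_\e$ so that it contains ${\R_{\geq 0}}^n$; in particular the enlarged $\s_\e$ is full-dimensional.

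Set $\s_0:=\bigcap_\e\s_\e$. This is a rational cone, it is strongly convex (each $\s_\e$ is, by Remark \ref{ElConoPosEsFuertConvexo}), it contains the first orthant, and $\s_0\subset(\R^n)_{\succeq_\e 0}$ for every $\e$. Lemma \ref{lemma1} (applied with $u_1=\o$) then yields $\s_0\subset\langle\o\rangle^\vee$ and $\s_0\cap\o^\perp=\{0\}$, so Lemma \ref{lemma_int} gives $\o\in\Int(\s_0^\vee)$. I next iterate Lemma \ref{SiEstaContenidoEnDosDesplazadosDeCono} over the $2^{n-1}$ translated cones $\g_\e+\s_\e$: at every step the relevant intersection of cones still contains ${\R_{\geq 0}}^n$, so the full-dimensionality hypothesis of that lemma is automatic, and after $2^{n-1}-1$ applications I obtain $\g_0'\in\R^n$ with
\[
	\Supp(\xi)\ \subset\ \bigcap_\e(\g_\e+\s_\e)\ \subset\ \g_0'+\s_0.
\]
Finally, taking $\g_0:=\lfloor\g_0'\rfloor\in\Z^n$ (componentwise floor), one has $\g_0'-\g_0\in[0,1)^n\subset{\R_{\geq 0}}^n\subset\s_0$, hence $\g_0'+\s_0\subset\g_0+\s_0$ and therefore $\Supp(\xi)\subset\g_0+\s_0$, as required. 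The only delicate step in the argument is the reduction from the uncountable family of orders in the hypothesis to the finite list $\{\preceq_\e\}_\e$, which is exactly what Corollary \ref{PositParaTodoRef} is designed to handle; everything else is a routine assembly of the convex-geometric lemmas from Section 2.
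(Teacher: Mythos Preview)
Your proof is correct and follows essentially the same strategy as the paper's: reduce to the $2^{n-1}$ orders $\leq_{(\omega,\e_2u_2,\ldots,\e_nu_n)}$, enlarge each $\s_\e$ so that the pairwise intersections are full-dimensional, set $\s_0=\bigcap_\e\s_\e$, and apply Lemma \ref{SiEstaContenidoEnDosDesplazadosDeCono} iteratively. The only substantive difference is the choice of enlarging cone: the paper takes the $u_i$ in $\omega^\perp$ and adjoins the purpose-built cone $\s'=\langle\,\omega+u_i,\ \omega-u_i : 2\le i\le n\,\rangle$, which is $\preceq_\e$-non-negative for every $\e$ by construction; you instead adjoin the first orthant, which is $\preceq_\e$-non-negative only because each $\preceq_\e$ is positive---hence your explicit restriction to $\omega\in{\R_{>0}}^n$. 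Since the hypothesis of the lemma already requires the orders to be positive (and the lemma is only ever invoked for $\omega\in{\R_{>0}}^n$), this shortcut is legitimate and arguably simpler. Your floor trick to force $\g_0\in\Z^n$ makes explicit a detail the paper leaves implicit.
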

\begin{proof}
	 Let $u_2,\ldots, u_n\in\R^n$ be a basis of $\omega^\perp$. For any $\e=(\e_2,\ldots,\e_n)\in\{-1,1\}^{n-1}$  the series $\xi$ is an element of $\mathscr{S}_{\leq_{\left(\omega,\e_2u_{2},\ldots ,\e_nu_n\right)}}$. That is there exist $\gamma_\e\in\Z^n$ and a $\leq_{\left(\omega,\e_2u_{2},\ldots ,\e_nu_n\right)}$-non-negative cone $\sigma_\e$ with
	\[
		\Supp (\xi)\subset \gamma_\e +\sigma_\e. 
	\]
	Let $\s'$ be  the cone generated by the following $2(n-1)$ vectors:
	$$\o+u_i,\ \o-u_i\text{ for } i=2,\ldots,n.$$
	This cone is full dimensional since the vectors $\o$, $u_2,\ldots, u_n$ form a basis of $\R^n$. Moreover, for every $i$, we have 
	$$\texttt{p}_{\o,u_2,\ldots,u_n}(\o\pm u_i)=(\o\cdot\o, \pm u_2\cdot u_i,\ldots, \pm u_n\cdot u_i)>_{\text{lex}} \texttt{p}_{\o,u_2,\ldots,u_n}(0)=(0,\ldots,0)$$
	since $u_i$ is orthogonal to $\o$ for all $i$ and $\o\cdot\o>0$, thus $\s'$ is $\leq_{\left(\omega,\e_2u_{2},\ldots ,\e_nu_n\right)}$-non-negative. By replacing $\s_\e$ by $\s_\e+\s'$ we may assume that $\s_\e$ contains the cone $\s'$.
	
	Set $\sigma_0:=\bigcap_{\e\in\{-1,1\}^{n-1}}\sigma_\e$. Since the intersection of the $\s_\e$ contains $\s'$ which is full dimensional, by Lemma \ref{SiEstaContenidoEnDosDesplazadosDeCono} 
	 there exists $\gamma_0\in\Z^n$ such that 
	 \[
		\Supp (\xi)\subset \gamma_0 +\sigma_0.
	\]
	
\end{proof}


\begin{lemma}\label{Tau0Open}
		Let $\sigma$ be a strongly convex cone containing the first orthant and let  $\xi\in \K [[\sigma]]_{x_1\cdots x_n}$ be algebraic over $\K[[x]]_{x_1\cdots x_n}$. Then the set $\tau_0 (\xi)$ is open.
\end{lemma}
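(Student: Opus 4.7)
The plan is to prove openness pointwise: given $\omega \in \tau_0(\xi)$, I will produce an explicit open neighborhood of $\omega$ that sits inside $\tau_0(\xi)$.

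First I would feed $\omega$ into Lemma \ref{SiEstaEntodoslosanillosquerefinan}. The hypothesis $\omega \in \tau_0(\xi)$ says exactly that $\xi \in \mathscr{S}^\K_\preceq$ for every continuous positive order $\preceq$ refining $\leq_\omega$, which is the input that lemma needs. It therefore supplies a rational cone $\sigma_0$ and a vector $\gamma_0 \in \Z^n$ with $\omega \in \Int(\sigma_0^\vee)$ and $\Supp(\xi) \subset \gamma_0 + \sigma_0$. The cone $\sigma_0$ constructed in the proof of that lemma is strongly convex, being an intersection of $\preceq$-non-negative cones (each strongly convex by Remark \ref{ElConoPosEsFuertConvexo}), so Lemma \ref{DualFuertConvDimMax} guarantees that $\sigma_0^\vee$ has full dimension and $\Int(\sigma_0^\vee)$ is genuinely open in $\R^n$. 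The candidate neighborhood is then $U := \Int(\sigma_0^\vee) \cap \R_{>0}^n$, which is open and contains $\omega$.

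Next, I would verify $U \subset \tau_0(\xi)$. Fix $\omega' \in U$ and any continuous positive order $\preceq$ refining $\leq_{\omega'}$. The implication (iii) $\Rightarrow$ (ii) of Corollary \ref{PositParaTodoRef}, applied to the pair $(\omega', \sigma_0)$, delivers that $\sigma_0$ is $\preceq$-non-negative. Combined with $\Supp(\xi) \subset \gamma_0 + \sigma_0$ and the rationality of $\sigma_0$, this places $\xi$ in $\mathscr{S}^\K_\preceq$ by the very definition of that field. Since $P(\xi) = 0$ holds as a polynomial identity, $\xi$ is one of the $d$ roots $\xi_i^\preceq$ of $P$ in $\mathscr{S}^\K_\preceq$, and therefore $\omega' \in \tau_0(\xi)$.

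I do not anticipate a serious obstacle here, since Lemma \ref{SiEstaEntodoslosanillosquerefinan} and Corollary \ref{PositParaTodoRef} do the heavy lifting and the argument is essentially a packaging step. The only point deserving care is that the relevant orders $\preceq$ are automatically positive, so that membership in $\mathscr{S}^\K_\preceq$ is meaningful: if $\omega' \in \R_{>0}^n$ and $\alpha \in \R_{\geq 0}^n \setminus \{0\}$, then $\omega' \cdot \alpha > 0$, so any total order refining $\leq_{\omega'}$ must rank $\alpha$ strictly above $0$, making the first orthant $\preceq$-non-negative as required.
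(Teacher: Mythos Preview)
Your proof follows essentially the same route as the paper's: invoke Lemma~\ref{SiEstaEntodoslosanillosquerefinan} to get $\sigma_0$ with $\omega\in\Int(\sigma_0^\vee)$ and $\Supp(\xi)\subset\gamma_0+\sigma_0$, then argue that $\int(\sigma_0^\vee)\cap{\R_{>0}}^n\subset\tau_0(\xi)$. The paper packages the second step as an appeal to Lemma~\ref{IntDeDualEnTau0}, while you unpack it directly via Corollary~\ref{PositParaTodoRef} (iii)$\Rightarrow$(ii); these are the same argument.

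One small caution: you assert ``the rationality of $\sigma_0$'', but the cone produced in the proof of Lemma~\ref{SiEstaEntodoslosanillosquerefinan} is $\bigcap_\e(\sigma_\e+\sigma')$ with $\sigma'=\langle\omega\pm u_i\rangle$, which need not be rational. This does not break your argument: since $\omega'\in\int(\sigma_0^\vee)$ forces $\omega'\cdot u>0$ for all $u\in\sigma_0\setminus\{0\}$, the slices $\{\alpha\in\Supp(\xi):\omega'\cdot\alpha\le k\}$ are finite, so $\Supp(\xi)$ is $\preceq$-well-ordered and Corollary~\ref{SiEsDesoporteBienOrdenado} places $\xi$ in $\mathscr{S}^\K_\preceq$ without needing $\sigma_0$ itself to be rational. (The paper's own invocation of Lemma~\ref{IntDeDualEnTau0} for $\sigma_0$ glosses over the same point.)
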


\begin{proof}
	If $\omega\in \tau_0 (\xi )$ then $\xi\in\mathscr{S}_\preceq$ for all continuous positive order $\preceq$ that refines $\leq_\omega$. By Lemma \ref{SiEstaEntodoslosanillosquerefinan} there exists 
	 a cone $\sigma_0$ and $\gamma_0\in\Z^n$ such that $\omega\in\Int ({\sigma_0}^\vee)$ and
	 \[
		\Supp (\xi)\subset \gamma_0 +\sigma_0.
	\]

	Since $\s_0$ is  a $\leq_{\left(\omega,\e_2u_{2},\ldots ,\e_nu_n\right)}$-non-negative cone for every $\e\in\{-1,1\}^{n-1}$, by Corollary  \ref{PositParaTodoRef}, $\o\in\int(\s_0^\vee)$. 		
	But $\int(\s_0^\vee)\subset\t_0(\xi)$ by Lemma \ref{IntDeDualEnTau0} hence $\t_0(\xi)$ is open.
\end{proof}


 \begin{lemma}\label{compacity_lemma}
Let $0\leq l\leq n$ be an integer and  $u_1,\ldots,u_l$ be nonzero orthogonal vectors of $\R^n$ such that $\leq_{(u_1,\ldots,u_l)}$ is a continuous positive preorder. Then there exists a finite set $\T$ of strongly convex rational cones of $\R^n$ such that for any positive continuous order $\preceq$ refining $\leq _{(u_1,\ldots,u_l)}$ there exists a $\preceq$-non-negative cone $\t\in \T$ such that  the roots of $P(T)$ in $\mathscr{S}^\K_{\preceq}$ are in $\K[[\t]]_{x_1\cdots x_n}$ (by convention when $l=0$ any continuous positive order is refining $\leq _{\emptyset}$).\\
 Moreover if $\T$ is minimal with the previous property, then for any $u_l'$ close enough to $u_l$ and such that $\leq_{(u_1,\ldots,u_{l-1},u'_l)}$ is positive, the set $\T$ satisfies the same property for the sequence $(u_1,\ldots,u_{l-1},u'_l)$.
 \end{lemma}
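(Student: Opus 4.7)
My plan is to proceed by downward induction on $l$, from $l = n$ down to $l = 0$, establishing both the main assertion and the ``moreover'' clause at each level simultaneously. The base case $l = n$ is straightforward: since the $u_i$'s are nonzero and orthogonal they form a basis of $\R^n$, so $\leq_{(u_1,\ldots,u_n)}$ is already a continuous positive total order and its only refinement is itself. Corollary \ref{SiEsDesoporteBienOrdenado} places each of the $d$ roots of $P$ in $\mathscr{S}^\K_{\leq_{(u_1,\ldots,u_n)}}$ inside some shifted non-negative rational cone, and iterating Lemma \ref{LaUniondeConosTrasladadosEstaEnUnconoTrasladado} packs them all into a single $\gamma+\t$. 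Taking $\T := \{\t\}$ gives a minimal working family. For the base-case moreover, each generator $v$ of $\t$ has a first index $j_v$ with $v\cdot u_{j_v} > 0$: if $j_v < n$ the inequality is untouched by perturbing $u_n$, while if $j_v = n$ then $v \in \R u_n$, so $v \cdot u_n'$ stays positive for $u_n'$ close to $u_n$.

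For the inductive step at a level $l < n$, assume the lemma at level $l+1$. Put $W := \langle u_1,\ldots,u_l\rangle^\perp$ and
\[
S^+ := \{\, v \in W : \|v\|=1,\ \leq_{(u_1,\ldots,u_l,v)} \text{ is positive}\,\},
\]
which is the intersection of the unit sphere of $W$ with finitely many closed half-spaces, hence compact. For each $v \in S^+$ the inductive hypothesis supplies a \emph{minimal} finite family $\T_v$, and its moreover clause produces an open neighborhood $V_v \subset S^+$ on which $\T_v$ still works. Compactness extracts a finite subcover $V_{v_1},\ldots,V_{v_k}$; set $\T_0 := \bigcup_j \T_{v_j}$. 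Any continuous positive order $\preceq$ refining $\leq_{(u_1,\ldots,u_l)}$ can, by Theorem \ref{rob} applied to its restriction on $W$, be written as $\leq_{(u_1,\ldots,u_l,w_{l+1},\ldots,w_n)}$ with orthogonal $w_j \in W$; positivity of $\preceq$ forces the normalization of $w_{l+1}$ into $S^+$ and hence into some $V_{v_j}$, so the corresponding $\T_{v_j} \subset \T_0$ supplies the desired cone. Finally I reduce $\T_0$ to a minimal subset $\T$ still enjoying the main property.

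The main obstacle is the moreover at level $l$. Given $u_l'$ close to $u_l$ with $\leq_{(u_1,\ldots,u_{l-1},u_l')}$ positive, and a refining order $\preceq' = \leq_{(u_1,\ldots,u_{l-1},u_l',w'_{l+1},\ldots,w'_n)}$, the natural device is the auxiliary preorder $\preceq^* := \leq_{(u_1,\ldots,u_{l-1},u_l,u_l',w'_{l+1},\ldots,w'_n)}$, which is total (its $n+1$ vectors span $\R^n$), positive (it refines the positive $\leq_{(u_1,\ldots,u_{l-1},u_l')}$), and refines $\leq_{(u_1,\ldots,u_l)}$, so the main assertion already yields $\t \in \T$ working for $\preceq^*$. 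Transferring $\preceq^*$-non-negativity of $\t$ to $\preceq'$-non-negativity is a routine case analysis on the first index where each generator $v$ has positive lex coordinate, the only delicate case being $v \cdot u_l > 0$ first, which forces $v \cdot u_l' > 0$ on a uniform neighborhood of $u_l$ (since only finitely many generators occur across all cones of $\T$). What I expect to be genuinely subtle, and where minimality of $\T$ plays its decisive role, is identifying the roots of $P$ in $\mathscr{S}^\K_{\preceq'}$ with those in $\mathscr{S}^\K_{\preceq^*}$: since $\preceq^*$ and $\preceq'$ are incomparable refinements, there is no ambient inclusion $\mathscr{S}^\K_{\preceq'} \subseteq \mathscr{S}^\K_{\preceq^*}$ or vice versa to exploit, and one must instead argue at the level of the root-extraction procedure (Newton polytope plus Hensel-style lifting) that perturbing $u_l$ to $u_l'$ does not alter the ``initial form'' used to expand the roots, so that the supports of the roots in the two fields literally coincide.
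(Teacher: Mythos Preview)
Your architecture is the paper's: downward induction on $l$, local constancy of $\T_v$ in $v$, then compactness of the set of admissible directions $v$. The paper actually \emph{proves} that local constancy (i.e.\ the moreover at level $l+1$) by a direct case analysis on the generators of $\t\cap\langle u_1,\ldots,u_l\rangle^\perp$ during the inductive step, rather than invoking it as part of the inductive hypothesis as you do; your auxiliary order $\preceq^*$ for the moreover at level $l$ is a neat repackaging of the same generator-by-generator check.

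Two corrections to your treatment of the moreover at level $l$. First, your justification that $\preceq^*=\leq_{(u_1,\ldots,u_{l-1},u_l,u_l',w'_{l+1},\ldots,w'_n)}$ is positive is wrong on two counts: $\preceq^*$ does \emph{not} refine $\leq_{(u_1,\ldots,u_{l-1},u_l')}$ (the vector $u_l$ sits before $u_l'$), and even if it did, a total order refining a positive preorder need not be positive. The correct argument: if $e^{(j)}\cdot u_i\neq 0$ for some $i\leq l$, positivity of $\leq_{(u_1,\ldots,u_l)}$ gives $e^{(j)}>_{\preceq^*}0$; if $e^{(j)}\in\langle u_1,\ldots,u_l\rangle^\perp$ then the $\preceq^*$-tail $(e^{(j)}\cdot u_l',e^{(j)}\cdot w'_{l+1},\ldots)$ coincides with the $\preceq'$-tail, and positivity of $\preceq'$ finishes. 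Second, the root identification you flag as ``genuinely subtle'' is in fact immediate and needs no minimality and no Newton--Hensel analysis: once $\t$ is $\preceq'$-non-negative you have $\K[[\t]]_{x_1\cdots x_n}\subset\mathscr{S}^\K_{\preceq'}$, so the $d$ roots of $P$ already lying in $\K[[\t]]_{x_1\cdots x_n}$ (obtained via $\preceq^*$) are $d$ roots of $P$ in the algebraically closed field $\mathscr{S}^\K_{\preceq'}$, hence all of them.
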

 
 \begin{proof}
 The proof is made by a decreasing induction on $l$. For $l=n$ the lemma is a direct consequence of Theorem \ref{TeoremaAlgCerrado} since $\leq_{(u_1,\ldots,u_n)}$ is the only continuous order refining itself.\\
 Let $l<n$ and let us assume that the theorem is proven for the integer $l+1$. Let $u_1,\ldots,u_l$ be orthogonal vectors  of $\R^n$  such that $\leq_{(u_1,\ldots,u_l)}$ is positive and let $v\in \langle u_1,\ldots,u_l\rangle^\perp$ such that $\leq_{(u_1,\ldots,u_l,v)}$ is positive. By the inductive assumption there exists a finite set of strongly convex rational cones $\T_v$ such that for every positive continuous order $\preceq$ refining $\leq_{(u_1,\ldots,u_l,v)}$ there exists a cone $\t\in \T_v$ such that $\t$ is $\preceq$-non-negative and the roots of $P(T)$ in $\mathscr{S}^\K_{\preceq}$ are in $\K[[\t]]_{x_1\cdots x_n}$.\\
 Let $\preceq$ be a positive continuous order refining $\leq_{(u_1,\ldots,u_l,v)}$ and let $\t\in \T_v$ such that $\t$ is $\preceq$-non-negative and the roots of $P(T)$ in $\mathscr{S}^\K_{\preceq}$ are in $\K[[\t]]_{x_1\cdots x_n}$. By  Theorem \ref{rob} there exist orthogonal vectors $w_1,\ldots,w_{n-l-1}\in\langle u_1,\ldots,u_l,v\rangle^\perp$ such that
 $$\preceq\,=\, \leq_{(u_1,\ldots,u_l,v,w_1,\ldots,w_{n-l-1})}.$$
 Since this order does not change if we replace $w_1$ by $\mu w_1$ where $\mu$ is a positive real number, we may assume that $\Vert w_1\Vert=1$.

 Set $\t':=\t\cap \langle u_1,\ldots,u_{l}\rangle^\perp$ and let $t_1,\ldots,t_s$ be vectors generating  the semigroup $\t'$ (we assume that none of these vectors are zero). We consider two cases:
 
 \begin{itemize}[leftmargin=*]
\item[1)]
  If $t_i\cdot v>0$ for every $i$ then $t_i\cdot v'>0$ for every $v'\in\R^n$ with $\Vert v-v'\Vert\ll 1$. Thus  $\t$ is $\preceq'$-non-negative for every positive continuous order $\preceq'$ that refines $\leq_{(u_1,\ldots,u_l,v')}$ for every $v'\in\R^n$ with $\Vert v-v'\Vert\ll 1$.
  \item[2)] 
   Let us assume that $t_i\cdot v=0$ for some index $i$. In this case for every index $i$ such that $t_i\cdot v=0$  we denote by $r(i)\leq n-l-1$ the smallest integer satisfying
   $$t_i\cdot w_{r(i)}>0.$$
   Such an integer $r(i)$ exists
since $\t$ is $\preceq$-non-negative, and $\preceq$ is a total order.\\
Firstly we claim there exists $\l_0>0$ such that $\t$ is $\preceq'$-non-negative for every positive continuous order $\preceq'$ refining 
$$\leq_{(u_1,\ldots,u_l,v+\l w_1,w_2,\ldots, w_{n-l-1})}$$
for any positive real number $\l<\l_0$.\\
We first prove that $\leq_{(u_1,\ldots,u_l,v+\l w_1,w_2,\ldots, w_{n-l-1})}$ is a continuous positive preorder for $\l$ small enough: we only have to prove that $e^{(j)}\geq_{(u_1,\ldots,u_l,v+\l w_1,w_2,\ldots, w_{n-l-1})} 0$ for every $j$ if $\l$ is small enough where the $e^{(j)}$ are the vectors of the canonical basis of $\R^n$. The only problem may occur when $e^{(j)}\cdot u_i=0$ for every $i$, $e^{(j)}\cdot v>0$ and $e^{(j)}\cdot w_1<0$. But in this case we have $e^{(j)}\cdot(v+\l w_1)>0$ as soon as $\l< e^{(j)}\cdot v$ since $\Vert w_1\Vert=\Vert e^{(j)}\Vert=1$. Thus for every positive number $\l<\l_1:=\min_j\{e^{(j)}\cdot v\}$, where $j$ runs over the indices such that $e^{(j)}\cdot v>0$, the continuous preorder $\leq_{(u_1,\ldots,u_l,v+\l w_1,w_2,\ldots, w_{n-l-1})}$ is positive.\\
Then let $\l$ be a positive real number satisfying
$$0<\l< \l_{w_1,\t}:=\min_i \left\{ \frac{t_i\cdot v}{| t_i\cdot w_1|}\right\}\in\R_{>0}\cup\{+\infty\}$$
where the minimum is taken over all the indices $i$ such that $t_i\cdot v>0$. This is an abuse of notation since $\l_{w_1,\t}$ may depend on the generators $t_i$ of the semigroup $\t'$. \\
Then if $t_i\cdot v>0$ for some integer $i$ we have that $t_i\cdot(v+\l w_1)>0$  by definition of $\l_{w_1,\t}$. 
If $t_i\cdot v=0$ for some integer $i$ then we have 
$$t_i\cdot (v+\l w_1)=t_i\cdot \l w_1=\cdots=t_i\cdot w_{r(i)-1}=0 \text{ and } t_i\cdot w_{r(i)}>0.$$
This shows that the cone $\t$ is $\preceq'$-positive for every positive continuous order $\preceq'$ refining 
$$\leq_{(u_1,\ldots,u_l,v+\l w_1,w_2,\ldots, w_{n-l-1})}$$ 
as soon as $\l<\l_0:=\min\{\l_1,\l_{w_1,\t}\}$ and the claim is proven.\\
Now let us set 
$$\l_2=\inf_{w_1,\tilde\t} \l_{w_1,\tilde\t}$$
where $w_1$ runs over all unit vectors of $\langle u_1,\ldots,u_l,v\rangle^\perp$ and all $\tilde\t\in \T_v$. As mentioned before $\l_{w_1,\tilde\t}$ depends on the choice of generators $\tilde t_i$ of the semigroup $\tilde\t\cap \langle u_1,\ldots,u_{l}\rangle^\perp$, so we assume that for each cone $\tilde\t\in\T_v$ we have chosen a set of generators $\tilde t_i$ of the semigroup $\tilde\t\cap \langle u_1,\ldots,u_{l}\rangle^\perp$ that allows us to define $\l_{w_1,\tilde\t}$. Then $\l_2>0$ since for every unit vector $w_1$ we have that
$$ \frac{\tilde t_i\cdot v}{| \tilde t_i\cdot w_1|}\geq \frac{\tilde t_i\cdot v}{\Vert\tilde t_i\Vert},$$
hence $\displaystyle \l_2>\min_{\tilde\t,i}\left\{\frac{\tilde t_i\cdot v}{\Vert \tilde t_i\Vert}\right\}>0$ where $\tilde\t$ runs over $\T_v$  and $i$ runs over the indices such that $\tilde t_i\cdot v>0$ where $\tilde t_1,\ldots,\tilde t_s$  are nonzero generators of $\tilde \t\cap \langle u_1,\ldots,u_{l}\rangle^\perp$.\\
This shows that for every $w'_1\in \langle u_1,\ldots,u_l,v\rangle^\perp$ small enough (i.e. with $\Vert w'_1\Vert<\l_3:=\min\{\l_1,\l_2\}$ - here $w'_1=\l w_1$ with $\l<\l_3$) and every positive continuous order $\preceq$ refining $\leq_{(u_1,\ldots,u_l,v+w'_1)}$ there is a cone $\tilde\t\in T_v$ such that $\tilde\t$ is $\preceq$-non-negative. 
\end{itemize}

These two cases show that for any vector $v'\in\langle u_1,\ldots,u_l\rangle^{\perp}$, with $\Vert v-v'\Vert$ small enough and such that $\leq_{(u_1,\ldots,u_l,v')}$ is positive, we can choose $\T_{v'}=\T_v$. \\
The existence of continuous orders  refining $\leq_{(u_1,\ldots,u_l)}$ which are not positive is equivalent to say  that the set $J$ of indices $j$ such that 
$$e^{(j)}\cdot u_1=\cdots=e^{(j)}\cdot u_l=0$$
 is not empty. So for a vector $v\in\langle u_1,\ldots,u_l\rangle^\perp$ the order $\leq_{(u_1,\ldots,u_l,v)}$ is positive if and only if $e^{(j)}\cdot v\geq 0$ for all $j\in J$, i.e. if $v\in \langle e^{(j)}, j\in J\rangle^\vee$. But $\P(\langle e^{(j)},j\in J\rangle^\vee)$ is compact.  Thus we can find a finite set $\mathcal V$ of vectors of $\langle u_1,\ldots,u_l\rangle^{\perp}$ such that for every $v'\in \langle u_1,\ldots,u_l\rangle^{\perp}$, such that $\leq_{(u_1,\ldots,u_l,v')}$ is positive,  there exists a $v\in \mathcal V$ such that $\T_{v'}=\T_v$. \\
Then the set $\T:=\bigcup_{v\in\mathcal V}\T_v$ suits for the sequence $(u_1,\ldots,u_l)$. This proves the lemma by induction.

  \end{proof}
  
  
  \begin{corollary}\label{cor1}
  
		Let $\sigma$ be a strongly convex cone containing the first orthant and let  $\xi\in \K [[\sigma]]_{x_1\cdots x_n}$ be algebraic over $\K[[x]]$. Then the set $\tau_1 (\xi )$ is open.
  \end{corollary}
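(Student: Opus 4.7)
The approach will closely mirror the proof of Lemma \ref{Tau0Open}, but the main tool becomes Lemma \ref{compacity_lemma} (together with its \emph{moreover} clause) combined with Lemma \ref{lemma_positive}. Fix $\omega \in \tau_1(\xi)$; I want to exhibit a neighborhood of $\omega$ in $\R_{>0}^n$ contained in $\tau_1(\xi)$.

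First I would apply Lemma \ref{compacity_lemma} with $l=1$ and $u_1=\omega$, taking a \emph{minimal} finite collection $\T$ of strongly convex rational cones such that, for every continuous positive order $\preceq$ refining $\leq_\omega$, every root of $P(T)$ in $\mathscr{S}^\K_\preceq$ lies in $\K[[\t]]_{x_1\cdots x_n}$ for some $\preceq$-non-negative $\t\in\T$. The moreover clause yields a neighborhood $U\subset\R_{>0}^n$ of $\omega$ on which the same $\T$ continues to do the job for each $\omega'\in U$.

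Next I would set
$$\T':=\{\t\in\T\mid \xi\in\K[[\t]]_{x_1\cdots x_n}\}\quad\text{and}\quad E:=\bigcup_{\t\in\T'}\t^\vee,$$
and prove the key claim: for $\omega'\in U$, one has $\omega'\in\tau_1(\xi)$ if and only if $\omega'\notin E$. For the forward implication I would argue by contraposition: if $\omega'\in\t^\vee$ for some $\t\in\T'$, then $\t\subset\langle\omega'\rangle^\vee$, so Lemma \ref{lemma_positive} furnishes a continuous positive order $\preceq$ refining $\leq_{\omega'}$ for which $\t$ is $\preceq$-non-negative; then $\xi\in\K[[\t]]_{x_1\cdots x_n}\subset\mathscr{S}^\K_\preceq$, so $\xi$ coincides with some $\xi_i^\preceq$ and $\omega'\notin\tau_1(\xi)$. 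For the converse, if $\omega'\notin\tau_1(\xi)$ pick $\preceq$ refining $\leq_{\omega'}$ with $\xi=\xi_i^\preceq$; by choice of $\T$, this root lies in $\K[[\t]]_{x_1\cdots x_n}$ for some $\preceq$-non-negative $\t\in\T$, so $\t\in\T'$, and the refinement condition forces $\omega'\cdot\a\geq 0$ for every $\a\in\t$ (otherwise $\a<_{\omega'} 0$ would give $\a\prec 0$), whence $\omega'\in\t^\vee\subset E$.

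Finally, since $\T'$ is finite and each $\t^\vee$ is a closed polyhedral cone, $E$ is closed, so $U\setminus E = U\cap\tau_1(\xi)$ is an open neighborhood of $\omega$ contained in $\tau_1(\xi)$.

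The main obstacle is the reliance on Lemma \ref{compacity_lemma}: it is essential that the finite family $\T$ is stable under small perturbations of $\omega$, which is exactly what the minimality-plus-\emph{moreover} clause of that lemma provides. The rest amounts to translating between the two descriptions of ``$\t$ is $\preceq$-non-negative for some $\preceq$ refining $\leq_{\omega'}$'' and ``$\omega'\in\t^\vee$'', which is precisely the content of Lemma \ref{lemma_positive} together with the defining implication of order refinement.
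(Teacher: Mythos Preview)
Your proof is correct and follows essentially the same strategy as the paper: both hinge on Lemma~\ref{compacity_lemma} (with $l=1$, $u_1=\omega$) and its \emph{moreover} clause to obtain a finite family $\T$ that is stable under small perturbations of $\omega$. The paper finishes by arguing that, for any $\preceq'$ refining $\leq_{\omega'}$, the roots $\xi_i^{\preceq'}$ coincide (as a set) with the roots $\xi_i^{\preceq}$ for some $\preceq$ refining $\leq_\omega$, whence $\xi$ cannot appear among them; you instead make the open neighborhood explicit as $U\setminus E$ for the closed set $E=\bigcup_{\t\in\T'}\t^\vee$, which is a slightly different but equally valid packaging of the same idea. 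One small point: Lemma~\ref{lemma_positive} as stated only produces a continuous order, not a continuous \emph{positive} order, but since $\omega'\in\R_{>0}^n$ any continuous total order refining $\leq_{\omega'}$ is automatically positive, so the gap is harmless.
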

  
  \begin{proof}
  Let $\o\in \t_1(\xi)$. Then $\xi\neq \xi_i^\preceq$ for every continuous positive order $\preceq$ refining $\leq_\o$ and for every $i$. Let $\T$ be a set of strongly convex cones for the sequence $\o$ satisfying Lemma \ref{compacity_lemma}  (here we apply this lemma with $l=1$). Then for every $\o'$ in a small neighborhood of $\o$ the set $\T$ is also a set of strongly convex cones satisfying the previous lemma for the sequence $\o'$. This implies that for every continuous positive order $\preceq'$ refining $ \leq_{\o'}$, there exists a continuous order $\preceq$ refining $\leq_\o$ such that $\xi_i^{\preceq'}=\xi_i^{\preceq}$ for every $i$. In particular for every continuous positive order $\preceq'$ refining $\leq_{\o'}$ and for every $i$ we have $\xi\neq\xi_i^{\preceq'}$. Thus $\o'\in\t_1(\xi)$ and $\t_1(\xi)$ is open.
  \end{proof}
  
  \begin{lemma}\label{initial_rational}
Let $\xi\in\K[[\s]]_{x_1\cdots x_n}$  (not necessarily algebraic over $\K[[x]]$) where $\s$ is a strongly convex rational cone and $\o\in\s^\vee$. Then the set
$$\{\a\cdot\o\mid \a\in\Supp(\xi)\}$$
has a minimum.
\end{lemma}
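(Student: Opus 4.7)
The plan is to reduce the claim to finding a minimum in a well-ordered set, by choosing an order on $\R^n$ that simultaneously well-orders $\Supp(\xi)$ and refines the preorder induced by $\o$.

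First, by definition of $\K[[\s]]_{x_1\cdots x_n}$ there exists $\g\in\Z^n$ with $\Supp(\xi)\subset (\g+\s)\cap\Z^n$. The hypothesis $\o\in\s^\vee$ is exactly $\s\subset\langle\o\rangle^\vee$, so I can apply Lemma \ref{lemma_positive} to $\o$ and $\s$ (using that $\s$ is strongly convex by hypothesis): this yields a continuous total order $\preceq$ on $\R^n$ that refines $\leq_\o$ and for which $\s$ is $\preceq$-non-negative.

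Next, Lemma \ref{LosEnterosInterseccionUnConoEstaBienOrenado} applied to this $\preceq$ and the $\preceq$-non-negative rational cone $\s$ shows that $\s\cap\Z^n$ is well-ordered for $\preceq$. Since $\preceq$ is compatible with the group structure, translating by $\g$ shows that $(\g+\s)\cap\Z^n$ is also well-ordered for $\preceq$, and therefore so is its subset $\Supp(\xi)$. Let $\a_0$ be the $\preceq$-minimum of $\Supp(\xi)$.

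Finally, I use that $\preceq$ refines $\leq_\o$: for every $\a\in\Supp(\xi)$ we have $\a_0\preceq\a$, hence $\a_0\leq_\o\a$, i.e.\ $\a_0\cdot\o\leq\a\cdot\o$. Since $\a_0\cdot\o$ itself belongs to the set $\{\a\cdot\o\mid\a\in\Supp(\xi)\}$, it is the desired minimum. There is no substantive obstacle in this argument; the only conceptual point is that $\o\in\s^\vee$ is precisely the input needed to feed Lemma \ref{lemma_positive}, after which Lemma \ref{LosEnterosInterseccionUnConoEstaBienOrenado} and the refinement property of $\preceq$ do all the work.
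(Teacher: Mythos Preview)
Your proof is correct and follows essentially the same approach as the paper's: apply Lemma~\ref{lemma_positive} to obtain a continuous order $\preceq$ refining $\leq_\o$ for which $\s$ is $\preceq$-non-negative, use Lemma~\ref{LosEnterosInterseccionUnConoEstaBienOrenado} to conclude that $\Supp(\xi)$ has a $\preceq$-minimum $\a_0$, and then use the refinement property to deduce that $\a_0\cdot\o$ is the desired minimum. Your write-up is in fact slightly more explicit than the paper's in spelling out the translation-by-$\g$ step and the implication $\a_0\preceq\a\Rightarrow\a_0\leq_\o\a$.
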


\begin{proof}
By Lemma \ref{lemma_positive}, there is a continuous positive order $\preceq$ refining $\leq_\o$ such that $\s$ is $\preceq$-non-negative. Thus, by Lemma \ref{LosEnterosInterseccionUnConoEstaBienOrenado}, $\s\cap\Z^n$ is well ordered for $\preceq$. Since $\Supp(\xi)\subset\g+\s$ it has $\preceq$-minimum. Let $\beta$ be the $\preceq$-minimum of $\Supp(\xi)$. Then $\omega\cdot\beta$ is the minimum of the set $\{\a\cdot\o\mid \a\in\Supp(\xi)\}$.
\end{proof}

\begin{definition}
Let $\xi\in\K[[\s]]_{x_1\cdots x_n}$ (not necessarily algebraic over $\K[[x]]$) where $\s$ is a strongly convex rational cone and $\o\in\s^\vee$. We write $\xi=\sum_\a\xi_\a x^\a$. The $\o$-order of $\xi$ is defined as
$$\nu_\o(\xi):=\min_{\a\in\Supp(\xi)}\{\a\cdot\o\}$$
and the $\o$-initial part of $\xi$ is defined by
$$\In_\o(\xi):=\sum_{\a\mid \a\cdot\o=\nu_\o(\xi)}\xi_\a x^\a.$$
\end{definition}


\begin{lemma}\label{characterization_tau_0}
Let $\sigma$ be a strongly convex cone containing the first orthant and let  $\xi\in \K [[\sigma]]_{x_1\cdots x_n}$ be algebraic over $\K[[x]]$. Then we have that
	$$\tau_0(\xi )= \left\{ \omega\in {\R_{> 0}}^n\mid \# \left(\Supp (\xi )\cap \left\{  u\in\R^n\mid u\cdot\omega\leq k\right\}\right)<\infty, \forall k\in\R\right\}.$$
\end{lemma}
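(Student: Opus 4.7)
The plan is to prove the stated equality by establishing both inclusions separately.

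For the inclusion $(\subseteq)$, I would assume $\o \in \t_0(\xi)$. By definition $\xi \in \Sc$ for every continuous positive order $\preceq$ refining $\leq_\o$, so the hypothesis of Lemma~\ref{SiEstaEntodoslosanillosquerefinan} is met and yields a rational cone $\s_0$ and $\g_0 \in \Z^n$ with $\o \in \int(\s_0^\vee)$ and $\Supp(\xi) \subset \g_0 + \s_0$. Since $\s_0^\vee$ is full dimensional, $\s_0$ is strongly convex, so its intersection with the unit sphere is compact (nonempty unless $\s_0=\{0\}$, a trivial case), and the continuous positive function $u \mapsto \o \cdot u$ attains a positive minimum $c$ there; thus $\o \cdot u \geq c\|u\|$ for all $u \in \s_0$. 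For $\a = \g_0 + u \in \Supp(\xi)$ with $\a \cdot \o \leq k$ we deduce $\|u\| \leq (k - \o \cdot \g_0)/c$, confining $\a$ to a bounded region of $\Z^n$, hence the finiteness.

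For the inclusion $(\supseteq)$, I would take $\o \in {\R_{>0}}^n$ satisfying the finiteness hypothesis and let $\preceq$ be any continuous positive order refining $\leq_\o$. The crucial observation is that $\Supp(\xi)$ is $\preceq$-well-ordered: given a nonempty $S' \subset \Supp(\xi)$, the image $\{\o \cdot \a \mid \a \in S'\}$ is bounded below with only finitely many values below any threshold, so it admits a minimum $c_{\min}$; the level set $T = \{\a \in S' \mid \o \cdot \a = c_{\min}\}$ sits inside the finite set $\{\a \in \Supp(\xi) \mid \o \cdot \a \leq c_{\min}\}$, so it has a $\preceq$-minimum $\a^*$. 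Any other $\a \in S'$ satisfies $\o \cdot \a > c_{\min} = \o \cdot \a^*$, which forces $\a^* \prec \a$ since $\preceq$ refines $\leq_\o$, so $\a^*$ is the $\preceq$-minimum of $S'$. As $\xi$ is then a Laurent series with $\preceq$-well-ordered support and a root of its minimal polynomial $P \in \K[[x]][T]$, Corollary~\ref{SiEsDesoporteBienOrdenado} yields $\xi \in \Sc$; this being true for every such $\preceq$, we conclude $\o \in \t_0(\xi)$.

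The decisive step is the well-ordering observation in the reverse direction: combining the finiteness of each $\o$-level of $\Supp(\xi)$ with the fact that $\preceq$ agrees with $\leq_\o$ whenever $\o$ separates two points is exactly what is needed. Once this is in hand, Corollary~\ref{SiEsDesoporteBienOrdenado}, which rests on the algebraic closedness of $\Sc$ from Theorem~\ref{TeoremaAlgCerrado}, absorbs the remaining work. The alternative of directly constructing, for each such $\preceq$, a $\preceq$-non-negative rational cone containing a translate of $\Supp(\xi)$ would be combinatorially delicate, because the ambient cone $\s$ from $\xi \in \K[[\s]]_{x_1\cdots x_n}$ need not itself be $\preceq$-non-negative and would have to be decomposed according to the sign of $\o$ on its generators.
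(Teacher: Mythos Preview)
Your proof is correct and follows essentially the same approach as the paper. For $(\subseteq)$ you both invoke Lemma~\ref{SiEstaEntodoslosanillosquerefinan} to produce $\s_0$ with $\o\in\int(\s_0^\vee)$; the paper then notes directly that $\s_0\cap\o^\perp=\{0\}$ forces finiteness of each truncation, while you phrase this via the positive minimum of $u\mapsto\o\cdot u$ on the unit sphere of $\s_0$, which is the same content. For $(\supseteq)$ both proofs observe that the finiteness hypothesis makes $\Supp(\xi)$ $\preceq$-well-ordered for any $\preceq$ refining $\leq_\o$ and then apply Corollary~\ref{SiEsDesoporteBienOrdenado}; you simply spell out the well-ordering verification in more detail than the paper does.
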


\begin{proof}
Let $\o\in\t_0(\xi)$. This means that $\xi\in\mathscr{S}^\K_\preceq$ for all continuous positive order $\preceq$ that refines $\leq_\o$. By Lemma \ref{SiEstaEntodoslosanillosquerefinan}, there exists a cone $\sigma_0$ and $\gamma_0\in\Z^n$ such that $\omega\in\Int ({\sigma_0}^\vee)$ and $\Supp (\xi )\subset \gamma_0 +\gamma_0$.

 In particular 
we have that
$$\Supp(\xi)\cap\left\{  u\in\R^n\mid u\cdot\omega\leq k\right\}\subset \g+\{u\in\s \mid u\cdot\o\leq k-\g\cdot\o\}\cap\Z^n.$$
But such a set $\{u\in\s \mid u\cdot\o \leq l\}\cap\Z^n$ is a finite set for any $l$ since  $\s\subset \langle\o\rangle^\vee$ and $\s\cap\o^\perp=\{0\}$. This proves that 
$$\tau_0(\xi )\subset \left\{ \omega\in {\R_{> 0}}^n\mid \# \left(\Supp (\xi )\cap \left\{  u\in\R^n\mid u\cdot\omega\leq k\right\}\right)<\infty, \forall k\in\R\right\}.$$\\

On the other hand, let $\o\in{\R_{>0}}^n$ be such that 
\begin{equation}\label{ecuacionfinitoentriangulo}
	\#\left(\Supp (\xi )\cap \left\{  u\in\R^n\mid u\cdot\omega\leq k\right\}\right)<\infty, \ \forall k\in\R
\end{equation}
and let us consider an order $\preceq$ that refines $\leq_\o$.\\
By (\ref{ecuacionfinitoentriangulo}) we have that $\Supp (\xi)$ is $\preceq$-well-ordered, then, by Corollary \ref{SiEsDesoporteBienOrdenado}, $\xi$ is an element of $\mathscr{S}^\K_\preceq$. This shows that $\o\in\t_0(\xi)$.\\
\end{proof}


\begin{corollary}\label{t_0}
The set $\tau_0(\xi)$ is a (non polyhedral) full dimensional convex cone.
\end{corollary}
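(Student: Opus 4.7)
The plan is to establish three properties of $\tau_0(\xi)$: full dimensionality, closure under positive scaling, and closure under convex combinations. The main tool throughout will be the support-theoretic characterization provided by Lemma \ref{characterization_tau_0}. Full dimensionality is immediate: Lemma \ref{IntDeDualEnTau0} gives $\int(\sigma^\vee) \subset \tau_0(\xi)$, and since $\sigma$ is strongly convex, Lemma \ref{DualFuertConvDimMax} ensures $\sigma^\vee$ has full dimension, so its interior is a non-empty open subset of $\R^n$. The cone property is also formal: for $\omega \in \tau_0(\xi)$ and $\lambda > 0$, the condition $u \cdot (\lambda \omega) \leq k$ is equivalent to $u \cdot \omega \leq k/\lambda$, so the finiteness condition transfers directly from $\omega$ to $\lambda\omega$.

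The substantive step is convexity, and the crucial input here is Lemma \ref{SiEstaEntodoslosanillosquerefinan}. For each $\omega_i \in \tau_0(\xi)$ ($i=1,2$), that lemma produces a rational cone $\sigma_i$ and a vector $\gamma_i \in \Z^n$ with $\omega_i \in \int(\sigma_i^\vee)$ and $\Supp(\xi) \subset \gamma_i + \sigma_i$; since $\sigma_i \subset \langle \omega_i \rangle^\vee$, this yields the uniform lower bound $u \cdot \omega_i \geq \gamma_i \cdot \omega_i$ valid for every $u \in \Supp(\xi)$. Now, for $t \in (0,1)$ set $\omega := t\omega_1 + (1-t)\omega_2$, which still lies in ${\R_{>0}}^n$. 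If $u \in \Supp(\xi)$ satisfies $u \cdot \omega \leq k$, then using the lower bound on $u \cdot \omega_2$ one obtains
$$u \cdot \omega_1 \leq \frac{k - (1-t)\,\gamma_2 \cdot \omega_2}{t},$$
so by Lemma \ref{characterization_tau_0} applied to $\omega_1$ only finitely many such $u$ can occur. Hence $\omega \in \tau_0(\xi)$, completing the convexity argument.

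The only real obstacle is in the convexity step, and it lies precisely in producing a uniform lower bound for $u \cdot \omega_i$ on $\Supp(\xi)$: without such a bound one cannot trade off one direction against another in the convex combination. Lemma \ref{SiEstaEntodoslosanillosquerefinan} is exactly what makes this conversion from the algebraic definition of $\tau_0(\xi)$ (involving the fields $\mathscr{S}^\K_\preceq$) into a geometric half-space containment possible, and once it is in hand the remainder of the argument is routine. The parenthetical remark that $\tau_0(\xi)$ need not be polyhedral is not something that requires a proof at this stage; it simply signals that $\tau_0(\xi)$ is a cone in the generalized sense used in this paragraph, as opposed to the polyhedral cones fixed in the paper's earlier convention.
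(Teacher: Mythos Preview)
Your proof is correct, but the convexity step takes a heavier route than the paper's. You invoke Lemma \ref{SiEstaEntodoslosanillosquerefinan} to produce, for each $\omega_i\in\tau_0(\xi)$, a translated cone containing $\Supp(\xi)$ and hence a uniform lower bound $u\cdot\omega_i\geq\gamma_i\cdot\omega_i$; this lets you bound $u\cdot\omega_1$ from above once $u\cdot\omega$ is bounded. The paper bypasses this entirely with the elementary inclusion
\[
\{u\mid u\cdot(\omega+\omega')\leq k+l\}\subset\{u\mid u\cdot\omega\leq k\}\cup\{u\mid u\cdot\omega'\leq l\},
\]
which, together with Lemma \ref{characterization_tau_0}, immediately gives $\omega+\omega'\in\tau_0(\xi)$; combined with closure under positive scaling this yields convexity. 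So your claim that ``without such a bound one cannot trade off one direction against another'' is not quite right: the union trick makes Lemma \ref{SiEstaEntodoslosanillosquerefinan} unnecessary here. Your approach does buy something, namely an explicit half-space containing $\Supp(\xi)$ for each $\omega_i$, but that extra information is not needed for this corollary. The full-dimensionality and cone-property arguments match the paper's.
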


\begin{proof}
If $\o\in\t_0(\xi)$ then clearly $\l\o\in\t_0(\xi)$ for every $\l>0$ so $\t_0(\xi)$ is a cone. Moreover if $\o$, $\o'\in\t_0(\xi)$ we have that
$$\{u\in\R^n\mid u\cdot(\o+\o') \leq k+l\}\subset \{u\in\R^n\mid u\cdot\o \leq k\}\cup \{u\in\R^n\mid u\cdot\o' \leq l\} $$
hence $\o+\o'\in\t_0(\xi)$ by Lemma \ref{characterization_tau_0}. Hence $\t_0(\xi)$ is a convex cone.\\
Since $\s$ is strongly convex we have that $\s^\vee$ is full dimensional by Lemma \ref{DualFuertConvDimMax}. Hence by Lemma \ref{IntDeDualEnTau0}
  $\t_0(\xi)$ is full dimensional.

\end{proof}


 \begin{corollary}\label{proper_tau_0}
  We have that $\t_0(\xi)={\R_{>0}}^n$ if and only if $\Supp(\xi)\subset \g+{\R_{\geq 0}}^n$ for some vector $\g\in\R^n$.
  \end{corollary}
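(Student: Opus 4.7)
First I would handle the easy direction $(\Leftarrow)$ via Lemma~\ref{characterization_tau_0}. Assume $\Supp(\xi)\subset\gamma+\R_{\geq 0}^n$. For any $\omega\in\R_{>0}^n$ and $k\in\R$, write an arbitrary $\alpha\in\Supp(\xi)$ as $\alpha=\gamma+\beta$ with $\beta\in\R_{\geq 0}^n\cap\Z^n$; the condition $\alpha\cdot\omega\leq k$ then yields $\beta\cdot\omega\leq k-\gamma\cdot\omega$. Since each $\omega_i>0$ and $\beta_i\geq 0$, every coordinate $\beta_i$ is constrained to the bounded interval $[0,(k-\gamma\cdot\omega)/\omega_i]$, and only finitely many lattice points $\beta$ can lie in this box. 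Lemma~\ref{characterization_tau_0} thus gives $\omega\in\tau_0(\xi)$, and combined with the inclusion $\tau_0(\xi)\subset\R_{>0}^n$ built into the definition, this forces $\tau_0(\xi)=\R_{>0}^n$.

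For the converse direction $(\Rightarrow)$ I would argue by contrapositive, invoking Theorem~\ref{main_thm} in the precise ``maximal dual cone'' form sketched at the beginning of this section. The relevant content is that when $\xi\notin\K[[x]]_{x_1\cdots x_n}$ the cone $\tau_0(\xi)$ is a proper sub-cone of $\R_{>0}^n$, and moreover for every $\omega$ on its boundary inside $\R_{>0}^n$ there is a real number $k$ for which $\Supp(\xi)\cap\{u\in\R^n:u\cdot\omega=k\}$ is infinite while only finitely many points of $\Supp(\xi)$ lie in the open half-space $\{u\in\R^n:u\cdot\omega<k\}$. Assuming $\xi\notin\K[[x]]_{x_1\cdots x_n}$, such a boundary point $\omega\in\R_{>0}^n$ therefore exists, the slice $\Supp(\xi)\cap\{u\in\R^n:u\cdot\omega\leq k\}$ is infinite, and Lemma~\ref{characterization_tau_0} yields $\omega\notin\tau_0(\xi)$; hence $\tau_0(\xi)\subsetneq\R_{>0}^n$, as needed.

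The principal obstacle lies in this converse: it is not a combinatorial consequence of the finite-slicing characterization of $\tau_0(\xi)$ alone, and it genuinely requires the algebraicity of $\xi$. Indeed one can exhibit formal (non-algebraic) Laurent sums whose supports satisfy the finite-slicing condition of Lemma~\ref{characterization_tau_0} for every $\omega\in\R_{>0}^n$ yet are contained in no translate of the first orthant---for example series supported on $\{(-j,j^{2}):j\geq 1\}$ in two variables---so the algebraicity hypothesis is essential. It enters the argument only through Theorem~\ref{main_thm}, whose own proof rests on the identification of the algebraic elements inside the fields $\mathscr{S}^\K_\preceq$ constructed in Section~\ref{alg_closed}.
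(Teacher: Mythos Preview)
Your easy direction $(\Leftarrow)$ is correct and essentially identical to the paper's argument via Lemma~\ref{characterization_tau_0}.

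The converse direction, however, is circular. You invoke Theorem~\ref{main_thm} to conclude that $\tau_0(\xi)\subsetneq\R_{>0}^n$ whenever $\xi\notin\K[[x]]_{x_1\cdots x_n}$, but the very first sentence of the paper's proof of Theorem~\ref{main_thm} is: ``Since $\xi\notin\K[[x]]_{x_1\cdots x_n}$ we have that $\tau_0(\xi)\subsetneq\R_{>0}^n$ by Corollary~\ref{proper_tau_0}.'' The dependence is not incidental: the nonemptiness clause $\R_{>0}^n\setminus(\tau_0(\xi)\cup\tau_1(\xi))\neq\emptyset$ in Theorem~\ref{main_thm} is obtained precisely from $\tau_0(\xi)\subsetneq\R_{>0}^n$ (combined with openness of $\tau_0(\xi)$, $\tau_1(\xi)$ and connectedness of $\R_{>0}^n$). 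So the content you are borrowing from Theorem~\ref{main_thm} is exactly the statement you are trying to establish. Your closing remark that algebraicity ``enters the argument only through Theorem~\ref{main_thm}'' is therefore misleading: in the paper's logical order, algebraicity must enter \emph{here}, before Theorem~\ref{main_thm} is available.

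The paper's actual proof of $(\Rightarrow)$ bypasses Theorem~\ref{main_thm} and appeals directly to Lemma~\ref{compacity_lemma}. Assuming $\tau_0(\xi)=\R_{>0}^n$, one applies that lemma with $l=1$ and $u_1=e^{(j)}$ for each canonical basis vector $e^{(j)}$, obtaining a finite family $\mathcal T_j$ of strongly convex rational cones which, by the stability clause of Lemma~\ref{compacity_lemma}, also serves for any $\omega$ sufficiently close to $e^{(j)}$. Choosing such an $\omega\in\R_{>0}^n$ and using $\omega\in\tau_0(\xi)$, one finds $\tau_j\in\mathcal T_j$ and $\gamma_j$ with $\Supp(\xi)\subset\gamma_j+\tau_j$; Lemma~\ref{SiEstaContenidoEnDosDesplazadosDeCono} then gives $\Supp(\xi)\subset\gamma+\bigcap_j\tau_j$, and since each $\tau_j$ is $\leq_{e^{(j)}}$-non-negative the intersection lies in $\R_{\geq 0}^n$. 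This is where the algebraicity of $\xi$ is genuinely used (through the finiteness in Lemma~\ref{compacity_lemma}), consistent with your correct observation that the example supported on $\{(-j,j^2):j\geq 1\}$ shows the implication fails for arbitrary Laurent series.
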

  
  \begin{proof}
 We remark that if $\Supp(\xi)\subset \g+ {\R_{>0}}^n$ then any $\o\in {\R_{>0}}^n$ will satisfy
 $$ \# \left(\Supp (\xi )\cap \left\{  u\in\R^n\mid u\cdot\omega\leq k\right\}\right)<\infty\ \ \ \  \forall k\in\R$$
 so ${\R_{>0}}^n=\t_0(\xi)$ by Lemma \ref{characterization_tau_0}.\\
  On the other hand let us assume that $\t_0(\xi)={\R_{>0}}^n$. Let $e^{(j)}$ be a vector of the canonical basis of $\R^n$. By Lemma \ref{compacity_lemma} there exists a finite set $\T_j$ of strongly convex cones of $\R^n$ such that for any  continuous positive order $\preceq$  refining $\leq_{e^{(j)}}$ there exists a $\preceq$-non-negative cone $\t\in \T_j$ such that the roots of $P$ in $\Sc$ are in $\K[[\t]]_{x_1\cdots x_n}$. Let $\o\in {\R_{>0}}^n$ be a vector such that $\Vert \o-e^{(j)}\Vert $ is small enough in order to ensure that $\T_j$ is also a set satisfying Lemma \ref{compacity_lemma} for $\o$ (i.e we apply it with $l=1$ and $u_1=\o$). Since $\o\in\t_0(\xi)$ there exists a cone $\t_j\in \T_j$ such that $\Supp(\xi)\subset \g_j+\t_j$ for some vector $\g_j$. This being true for $j=1,\ldots, n$ we have 
  $$\Supp(\xi)\subset \bigcap_{j=1}^n(\g_j+\t_j)\subset \g+\bigcap_{j=1}^n\t_j$$
  for some $\g\in\R^n$ by Lemma \ref{SiEstaContenidoEnDosDesplazadosDeCono}.  But $\bigcap_{j=1}^n\t_j$ is a $\leq_{e^{(j)}}$-non-negative cone for every $j=1,\ldots,n$, thus  $\bigcap_{j=1}^n\t_j\subset {\R_{\geq 0}}^n$. Hence $\Supp(\xi)\subset \g+{\R_{\geq 0}}^n$.

  \end{proof}


\begin{lemma}\label{characterization_tau_1}
Let $\sigma$ be a strongly convex cone containing the first orthant and let  $\xi\in \K [[\sigma]]_{x_1\cdots x_n}$ be algebraic over $\K[[x]]$. Then
	$$\tau_1(\xi )= \left\{ \omega\in {\R_{> 0}}^n\mid \# \left(\Supp (\xi )\cap \left\{  u\in\R^n\mid u\cdot\omega\leq k\right\}\right)=\infty, \forall k\in\R\right\}.$$
\end{lemma}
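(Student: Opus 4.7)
I will prove the two inclusions.

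For $\supseteq$, suppose $\omega \in {\R_{>0}}^n$ is such that $\#(\Supp(\xi) \cap \{u : u \cdot \omega \leq k\}) = \infty$ for every $k \in \R$. Arguing by contradiction, assume $\omega \notin \tau_1(\xi)$, so that $\xi = \xi_i^\preceq$ for some continuous positive order $\preceq$ refining $\leq_\omega$ and some index $i$. Then $\xi \in \mathscr{S}^\K_\preceq$, so there exist $\gamma \in \Z^n$ and a rational $\preceq$-non-negative cone $\tau$ with $\Supp(\xi) \subseteq \gamma + \tau$. Since $\preceq$ refines $\leq_\omega$, every $u \in \tau$ satisfies $0 \preceq u$ and hence $u \cdot \omega \geq 0$, giving $\alpha \cdot \omega \geq \gamma \cdot \omega$ for every $\alpha \in \Supp(\xi)$. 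Therefore the slice at any $k < \gamma \cdot \omega$ is empty, contradicting the hypothesis.

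For $\subseteq$, I argue by contrapositive: assume that there exists $k_0 \in \R$ with $\Supp(\xi) \cap \{u : u \cdot \omega \leq k_0\}$ finite, and I will exhibit a continuous positive order $\preceq$ refining $\leq_\omega$ such that $\xi \in \mathscr{S}^\K_\preceq$; this gives $\omega \notin \tau_1(\xi)$. The finiteness hypothesis immediately yields $\Supp(\xi) \cdot \omega \geq m$ for some real $m$. Next, I apply Lemma \ref{compacity_lemma} with $l = 1$ and $u_1 = \omega$ to produce a finite family $\T$ of strongly convex rational cones, each contained in $\langle\omega\rangle^\vee$, such that for every continuous positive order $\preceq$ refining $\leq_\omega$ at least one $\tau \in \T$ is $\preceq$-non-negative and the roots of $P$ in $\mathscr{S}^\K_\preceq$ lie in $\K[[\tau]]_{x_1 \cdots x_n}$.

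The key step, and the main obstacle, will be to show that $\xi$ itself belongs to $\K[[\tau]]_{x_1 \cdots x_n}$ for some $\tau \in \T$. This rests on the algebraicity of $\xi$ combined with the bounded-below $\omega$-valuation: because some slice is finite, the $\omega$-initial part $\In_\omega(\xi)$ is a well-defined finite sum, and it must be a root of the $\omega$-initial form of $P$; subtracting it and iterating, $\xi$ is reconstructed by a Newton--Puiseux-type recursion whose successive initial forms are constrained by the Newton polygon of $P$ to sit in the support data governed by $\T$, forcing $\Supp(\xi) \subseteq \gamma + \tau$ for some $\gamma \in \Z^n$ and some $\tau \in \T$. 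Once this is established, Lemma \ref{lemma_positive} applied to the strongly convex $\tau$ furnishes a continuous positive order $\preceq$ refining $\leq_\omega$ under which $\tau$ is non-negative, so $\xi \in \mathscr{S}^\K_\preceq$ and $\omega \notin \tau_1(\xi)$, completing the argument.
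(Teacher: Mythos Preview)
Your argument for $\supseteq$ is correct and matches the paper's.

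For $\subseteq$, however, there is a genuine gap and an unnecessary detour. Your ``key step'' --- that $\xi$ must lie in $\K[[\tau]]_{x_1\cdots x_n}$ for some $\tau\in\T$ --- is never actually proven. The sketch invoking a ``Newton--Puiseux-type recursion'' is circular: Lemma~\ref{compacity_lemma} only tells you that the roots of $P$ in $\mathscr{S}^\K_\preceq$ sit in some $\K[[\tau]]_{x_1\cdots x_n}$, but whether $\xi$ itself is such a root (for some $\preceq$ refining $\leq_\omega$) is precisely what you are trying to establish. The vague appeal to Newton polygons does not close this loop.

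More importantly, you have overlooked the hypothesis. You already know $\Supp(\xi)\subset\gamma+\sigma$ for the given strongly convex cone $\sigma$. The paper's argument is then direct and elementary: from the finiteness of one slice you get $\Supp(\xi)\subset\gamma_0+\langle\omega\rangle^\vee$ for some $\gamma_0$; by Lemma~\ref{SiEstaContenidoEnDosDesplazadosDeCono} the intersection $(\gamma+\sigma)\cap(\gamma_0+\langle\omega\rangle^\vee)$ sits in $\gamma'+\sigma\cap\langle\omega\rangle^\vee$; the cone $\sigma\cap\langle\omega\rangle^\vee$ is strongly convex (as $\sigma$ is) and contained in $\langle\omega\rangle^\vee$, so Lemma~\ref{lemma_positive} produces a continuous positive order $\preceq$ refining $\leq_\omega$ for which this cone is non-negative. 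Hence $\xi\in\mathscr{S}^\K_\preceq$, so $\xi=\xi_i^\preceq$ for some $i$, and $\omega\notin\tau_1(\xi)$. No appeal to Lemma~\ref{compacity_lemma} or to any Newton--Puiseux construction is needed.
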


\begin{proof}
Let $\o_0\notin \t_1(\xi)$, i.e. there exist an integer $i$ and a continuous positive order $\preceq$ that refines $\leq_{\o_0}$ such that $\xi=\xi_i^{\preceq}$. Thus there exist $\g\in\Z^n$ and a $\preceq$-positive cone $\s'$ such that 
$\Supp(\xi)\subset \g+\s'$. Set $k_0:=\g\cdot\o_0$.  Then for every $k<k_0$ we have that
$$\Supp(\xi)\cap\{u\in\R^n\mid u\cdot\o_0\leq k\}=\emptyset,$$
hence 
$$\o_0\notin \left\{ \omega\in {\R_{> 0}}^n\mid \# \left(\Supp (\xi )\cap \left\{  u\in\R^n\mid u\cdot\omega\leq k\right\}\right)=\infty, \forall k\in\R\right\}.$$ This proves that
$$\tau_1(\xi )\supseteq\left\{ \omega\in {\R_{>0}}^n\mid \# \left(\Supp (\xi )\cap \left\{  u\in\R^n\mid u\cdot\omega\leq k\right\}\right)=\infty, \forall k\in\R\right\}.$$\\
Now let $\o\in{\R_{>0}}^n$ be  such that for some $k\in\R$
$$\# \left(\Supp(\xi)\cap\left\{  u\in\R^n\mid u\cdot\omega\leq k\right\}\right)<\infty.$$
Then for some real number $k_0$ we have that
$$\Supp(\xi)\cap\left\{  u\in\R^n\mid u\cdot\omega\leq k_0\right\}=\emptyset.$$
Let $\g_0\in\R^n$ be such that $\g_0\cdot\o\leq k_0$. Thus $\Supp(\xi)\subset \g_0+\langle\o\rangle^\vee$.\\
By assumption $\Supp(\xi)\subset \g+\s$ for some vector $\g$ and the strongly convex cone $\s$ given in the assumptions.
By Lemma \ref{SiEstaContenidoEnDosDesplazadosDeCono} there is a vector $\g'$ such that
$$(\g+\s)\cap(\g_0+\langle \o\rangle^\vee)\subset \g'+\s\cap\langle\o\rangle^\vee.$$
Thus we may assume that $\s\subset \langle \o\rangle^\vee$ and by Lemma \ref{lemma_positive} there exists a continuous positive order $\preceq$ in $\R^n$ that refines $\leq_\o$ such that $\s$ is a $\preceq$-non-negative cone. This proves that $\xi\in \Sc$ hence $\xi=\xi_i^\preceq$ for some $i$ and $\o\notin\t_1(\xi)$. Hence the reverse inclusion is proven.
\end{proof}

\begin{lemma}\label{lemma_ref}
Let $\xi$ be a Laurent series algebraic over $\K(\!(x)\!)$ and take $\o\in{\R_{>0}}^n$. If 
$$\forall \l<0\ \ \ \ \#\left(\Supp(\xi)\cap\{x\in\R^n \mid  \o\cdot x<\l\}\right)<\infty$$
then
$$\#\left(\Supp(\xi)\cap\{x\in\R^n \mid  \o\cdot x<0\}\right)<\infty.$$
\end{lemma}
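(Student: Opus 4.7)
The plan is to argue by contradiction. Assume $\#(\Supp(\xi)\cap\{x\in\R^n\mid \omega\cdot x<0\})=\infty$; combined with the hypothesis of finiteness below each $\lambda<0$, the $\omega$-values attained by support elements in $(-\infty,0)$ must form an infinite strictly increasing sequence $\mu_1<\mu_2<\cdots$ accumulating at $0^-$, with each level set $L_{\mu_k}:=\{\alpha\in\Supp(\xi)\mid \omega\cdot\alpha=\mu_k\}$ finite (otherwise some $|S_\lambda|$ with $\lambda<0$ would already be infinite). Pick $\alpha^{(k)}\in L_{\mu_k}$. Then $\|\alpha^{(k)}\|\to\infty$ (else only finitely many distinct $\omega$-values would be attained), and after extracting a subsequence via compactness of the unit sphere, the directions $\alpha^{(k)}/\|\alpha^{(k)}\|$ converge to some unit vector $d$. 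Since $\omega\cdot\alpha^{(k)}/\|\alpha^{(k)}\|\to 0$, one has $d\in\omega^\perp$.

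I then invoke Lemma \ref{compacity_lemma} with $l=1$ and $u_1=\omega$ to obtain a finite collection $\T$ of strongly convex rational cones such that, for every continuous positive order $\preceq$ refining $\leq_\omega$, some $\tau\in\T$ is $\preceq$-non-negative and contains the support of each root $\xi_i^\preceq$ of $P(T)$ in $\mathscr{S}^\K_\preceq$ (up to translation). By the ``moreover'' clause, the same $\T$ serves any $\omega'$ sufficiently close to $\omega$ for which $\leq_{\omega'}$ is positive. I tilt $\omega$ by a small vector: choose $v\in\omega^\perp$ with $v\cdot d>0$ and $\epsilon>0$ small enough that $\omega':=\omega-\epsilon v\in\R^n_{>0}$. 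Then
\[
\omega'\cdot\alpha^{(k)}=\mu_k-\epsilon\,v\cdot\alpha^{(k)}\sim -\epsilon\|\alpha^{(k)}\|\,(v\cdot d)\to -\infty,
\]
so infinitely many support elements lie below any $\omega'$-threshold, which by Lemma \ref{characterization_tau_1} means $\omega'\in\tau_1(\xi)$.

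To close the argument I want to contradict $\omega'\in\tau_1(\xi)$ using the cone structure provided by $\T$. For every continuous positive order $\preceq'$ refining $\leq_{\omega'}$, the algebraic roots $\xi_i^{\preceq'}$ of $P$ satisfy $\Supp(\xi_i^{\preceq'})\subset\gamma_i+\tau$ for some $\tau\in\T$ with $\omega'\cdot t\geq 0$ on $\tau$, so each such support has $\omega'$-values bounded below. The main obstacle, which I expect to be the technical heart of the proof, is to transfer this lower bound to the Laurent series $\xi$ itself, despite the fact that $\xi\notin\mathscr{S}^\K_{\preceq'}$ (this is precisely what $\omega'\in\tau_1(\xi)$ records). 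The transfer should rely on the algebraic relations between $\xi$ and the $\xi_i^{\preceq'}$ coming from the coefficients of $P$ lying in $\K[[x]]$: since these coefficients are symmetric functions of the full collection of roots, $\Supp(\xi)$ must be controlled inside a finite union of translates of cones from $\T$, forcing a lower bound on $\omega'$-values of $\Supp(\xi)$ incompatible with $\omega'\cdot\alpha^{(k)}\to -\infty$. Establishing this transfer cleanly would complete the contradiction and prove the lemma.
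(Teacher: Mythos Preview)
Your argument has a genuine gap at exactly the point you flag. You reach $\omega'\in\tau_1(\xi)$ and want this to contradict something coming from the family $\T$ of Lemma~\ref{compacity_lemma}. But that lemma only locates the supports of the roots $\xi_i^{\preceq'}$ of $P$ \emph{inside} $\mathscr{S}^\K_{\preceq'}$; the statement $\omega'\in\tau_1(\xi)$ says precisely that $\xi$ is \emph{not} among those roots. The symmetric-function idea does not help: the elementary symmetric functions of $\xi_1^{\preceq'},\ldots,\xi_d^{\preceq'}$ equal the coefficients of $P$, which constrains those roots relative to one another and to $\K[[x]]$, but creates no relation between $\Supp(\xi)$ and the $\Supp(\xi_i^{\preceq'})$ --- these are roots in \emph{different} algebraically closed fields, and there is no factorisation of $P$ in any common ring linking them. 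So there is nothing to contradict, and the tilting to $\omega'$ leads nowhere.

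The paper's proof stays at $\omega$ and avoids this trap. From the hypothesis one has $\omega\notin\tau_1(\xi)$ (Lemma~\ref{characterization_tau_1}), hence $\Supp(\xi)\subset\gamma+\sigma$ for some rational strongly convex cone $\sigma$ which is $\preceq$-non-negative for an order refining $\leq_\omega$. The obstruction to concluding directly is that $\sigma\cap\omega^\perp$ may be a nontrivial face; the key idea is to kill it with a \emph{rational} projection. Let $\mathcal L\subset\omega^\perp$ be the subspace spanned by the rational vectors in $\omega^\perp$, and let $\pi$ be the orthogonal projection with kernel $\mathcal L$. Then $\pi$ preserves $\omega$-values, sends $\Z^n$ into $\frac{1}{k}\Z^n$, and --- crucially --- $\pi(\sigma)\cap\omega^\perp=\{0\}$, since any rational vector in $\omega^\perp$ already lies in $\mathcal L=\ker\pi$. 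Thus $\pi(\Supp(\xi))$ is a subset of a translated lattice in which each $\omega$-sublevel set is finite; in particular the negative $\omega$-values attained are bounded away from $0$ by some $\mu<0$, and one concludes $\Supp(\xi)\cap\{\omega\cdot x<0\}=\Supp(\xi)\cap\{\omega\cdot x<\mu/2\}$, which is finite by hypothesis. Your limiting direction $d\in\omega^\perp$ is morally the face $\sigma\cap\omega^\perp$ that causes trouble; the projection $\pi$ is what removes it, and this is the step your argument is missing.
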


\begin{proof}
Let $\mathcal L$ be the linear subspace of $H_\o:=\{x\in\R^n\ \mid\ \o\cdot x=0\}$ generated by the vectors with rational coordinates, and let us set $\mathcal L^\Q=\mathcal L\cap \Q^n$ . We have
\begin{equation}\label{eeqq}\mathcal L^\perp\cap H_\o\cap\Q^n=\{0\}.\end{equation}
Let $\pi$ denote the orthogonal projection onto $\mathcal L^\perp$ and whose kernel is $\mathcal L$. Since $\mathcal L\oplus \mathcal L^\perp=\R^n$, for every $x\in\R^n$ we can write in a unique way
$$x=y+z \text{ with } y\in \mathcal L \text{ and } z\in\mathcal L^\perp.$$
Then $\pi(x):=z$.\\
Since $\mathcal L$ is generated by vectors with rational coordinates, $\mathcal L$ is generated by $\mathcal L^\Q$ and we have that $\mathcal L^\perp$ is generated by the orthogonal of $\mathcal L^\Q$ in $\Q^n$. Thus for every vector $u$ of $\Q^n$ we have that $\pi(u)\in \Q^n$. Thus, if we denote by $e_1$, \ldots, $e_n$ the vectors of the canonical basis of $\Z^n$, $\pi(e_i)\in\Q^n$ for every $i$. Let $k\in\Z_{> 0}$ be an integer such that $k\pi(e_i)\in\Z^n$ for every $i$. Since $\pi$ is linear we have that
$$\pi(\Z^n)\subset \frac{1}{k}\Z^n.$$
Since $\#\left(\Supp(\xi)\cap\{x\in\R^n \mid  \o\cdot x<-\e\}\right)<\infty$ for any $\e>0$, Lemma \ref{characterization_tau_1} shows that $\o\notin\tau_1(\xi)$. Therefore there is a continuous positive order $\preceq$ refining $\leq_\o$, a $\preceq$-non-negative strongly convex rational cone $\s$ and $\g\in \Z^n$ such that 
$$\Supp(\xi)\subset \g+\s.$$
Because  $\pi$ is a rational cone, $\pi(\s)$ is a rational cone which is $\leq_\o$-non-negative since
$$\forall u\in\R^n\ \ \ \o\cdot u=\o\cdot\pi(u).$$
Since $\pi(\s)$ is rational and contained in $\mathcal L^\perp$, by \eqref{eeqq} we have that
$$\pi(\s)\cap\{x\in\R^n\ \mid\ \o\cdot x=0\}=\{0\}.$$
Therefore for every $\l\in\R$ 
$$\#\left(\left(\pi(\g)+\pi(\s)\right)\cap \frac{1}{k}\Z^n\cap \{x\mid \o\cdot x<\l\}\right)<\infty.$$
In particular 
$$\#\left(\pi(\Supp(\xi))\cap \{x\mid \o\cdot x<\l\}\right)<\infty.$$
Set 
$$\mu:=\max\left\{\o\cdot \b \mid \b\in\pi(\Supp(\xi))\cap\{x\mid \o\cdot x<0\}\right\}.$$
Then
$$\Supp(\xi)\cap\{x\in\R^n \mid \o\cdot x<0\}=\Supp(\xi)\cap\left\{x\in\R^n \mid  \o\cdot x<\frac{\mu}{2}\right\}$$
which is finite by hypothesis.

\end{proof}

The next result is the main theorem of this part. It means that $\t_0(\xi)$ is a kind of maximal dual cone of $\Supp(\xi)$. This result may seem to be a bit technical but it will be very useful in the sequel.

\begin{theorem}\label{main_thm}
	Let $\sigma$ be a strongly convex cone containing the first orthant, let  $\xi\in \K [[\sigma]]_{x_1\cdots x_n}\setminus \K[[x]]_{x_1\cdots x_n}$ be algebraic over $\K[[x]]$. Then ${\R_{>0}}^n\backslash(\t_0(\xi)\cup\t_1(\xi))\neq\emptyset$ and, for every 
	$\omega\in {\R_{>0}}^n\backslash(\t_0(\xi)\cup\t_1(\xi))$, there exists 
	 a Laurent polynomial $p(x)\in \K [ x]_{x_1\cdots x_n}$ such that
	  $$\# \left(\Supp( \In_\omega (\xi +p) )\right)=\infty.$$
\end{theorem}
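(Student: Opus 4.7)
The first assertion follows by a topological argument. Both $\tau_0(\xi)$ and $\tau_1(\xi)$ are disjoint open subsets of the connected set ${\R_{>0}}^n$ by Lemma \ref{Tau0Open} and Corollary \ref{cor1}; $\tau_0(\xi)$ is nonempty because $\int(\sigma^\vee) \subset \tau_0(\xi)$ (Lemma \ref{IntDeDualEnTau0}) and this interior, contained in ${\R_{>0}}^n$ since $\sigma \supset {\R_{\geq 0}}^n$, is nonempty by strong convexity of $\sigma$; finally $\tau_0(\xi) \neq {\R_{>0}}^n$ by Corollary \ref{proper_tau_0} together with the hypothesis $\xi \notin \K[[x]]_{x_1 \cdots x_n}$. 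If $\tau_0(\xi) \cup \tau_1(\xi)$ were equal to ${\R_{>0}}^n$, then $\tau_0(\xi)$ would be clopen in a connected set and hence empty or everything, contradicting the above.

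For the second part, fix $\omega \in {\R_{>0}}^n \setminus (\tau_0(\xi) \cup \tau_1(\xi))$ and set
\[
k^* := \inf\{k \in \R \mid \#(\Supp(\xi) \cap \{u \mid u \cdot \omega \leq k\}) = \infty\}.
\]
Lemmas \ref{characterization_tau_0} and \ref{characterization_tau_1} applied to the two conditions defining membership in ${\R_{>0}}^n \setminus (\tau_0(\xi) \cup \tau_1(\xi))$ force $k^* \in \R$. A direct adaptation of Lemma \ref{lemma_ref} with threshold $k^*$ in place of $0$ (one replaces the quantity $\mu/2$ in its proof by $(\mu + k^*)/2$) then yields that $\Supp(\xi) \cap \{u \cdot \omega < k^*\}$ is finite.

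The heart of the proof---and the step I expect to be the main obstacle---is the claim that $\Supp(\xi) \cap \{u \cdot \omega = k^*\}$ is infinite. I would argue by contradiction. If this set were finite, the Laurent polynomial $p' := -\sum_{\omega \cdot \alpha \leq k^*} \xi_\alpha x^\alpha$ would be well defined, and $\eta := \xi + p'$ would lie in $\K[[\sigma]]_{x_1\cdots x_n}$, be algebraic over $\K[[x]]$, and satisfy $\Supp(\eta) \subset \{u \cdot \omega > k^*\}$ while each level set $\Supp(\eta) \cap \{u \cdot \omega \leq k^* + \varepsilon\}$ remains infinite for every $\varepsilon > 0$. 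By Lemma \ref{characterization_tau_1} applied to $\eta$, $\omega \notin \tau_1(\eta)$, so by the intrinsic definition of $\tau_1$ there exists a continuous positive order $\preceq$ refining $\leq_\omega$ such that $\eta \in \mathscr{S}^\K_\preceq$. Then $\Supp(\eta)$ is $\preceq$-well ordered (Lemma \ref{LosEnterosInterseccionUnConoEstaBienOrenado}), and its $\preceq$-minimum $\beta$, by refinement, satisfies $\omega \cdot \beta \leq \omega \cdot \alpha$ for all $\alpha \in \Supp(\eta)$, making $\omega \cdot \beta$ the actual minimum of $\omega \cdot \alpha$ over $\Supp(\eta)$. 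But this minimum cannot exist, since $\omega \cdot \alpha > k^*$ on $\Supp(\eta)$ and the values accumulate down to $k^*$: a contradiction. This combination of the combinatorial characterization of $\tau_1$ with the well-ordering of supports in the algebraically closed fields $\mathscr{S}^\K_\preceq$ is the delicate point.

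Once the claim is established, I would finally set $p := -\sum_{\omega \cdot \alpha < k^*} \xi_\alpha x^\alpha$, which is a Laurent polynomial by the finiteness obtained in step two. Then $\Supp(\xi + p) = \Supp(\xi) \cap \{u \cdot \omega \geq k^*\}$, so $\nu_\omega(\xi + p) = k^*$ is attained, and
\[
\In_\omega(\xi + p) = \sum_{\omega \cdot \alpha = k^*} \xi_\alpha x^\alpha
\]
has infinite support by the main claim.
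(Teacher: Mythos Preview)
Your proof is correct and follows essentially the same route as the paper: the connectedness argument for nonemptiness, the definition of the critical threshold $k^*$ (the paper's $\lambda_0=\sup A_\omega$), the use of Lemma \ref{lemma_ref} to obtain finiteness below the threshold, and the contradiction obtained from $\omega\notin\tau_1$ by extracting a well-defined $\nu_\omega$ via membership in some $\mathscr{S}^\K_\preceq$ (the paper packages this last step as Lemma \ref{initial_rational}). The only cosmetic differences are that the paper normalizes to $\lambda_0=0$ by a monomial translation instead of adapting Lemma \ref{lemma_ref}, and invokes $\tau_1(\xi)=\tau_1(\xi+p)$ rather than reapplying Lemma \ref{characterization_tau_1} to $\eta$.
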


\begin{proof}
Since $\xi\notin\K[[x]]_{x_1\cdots x_n}$ we have that $\t_0(\xi)\subsetneq{\R_{>0}}^n$ by Corollary \ref{proper_tau_0}. On the other hand since $\xi\in \K [[\sigma]]_{x_1\cdots x_n}$ for every $\o\in \int(\s^\vee)$ we have that $\o\in \t_0(\xi)$ by Lemma \ref{IntDeDualEnTau0}. Thus ${\R_{>0}}^n\backslash \t_0(\xi)$ is closed and different from $\emptyset$ or ${\R_{>0}}^n$. Since ${\R_{>0}}^n$ is connected then ${\R_{>0}}^n\backslash \t_0(\xi)$ is not open so ${\R_{>0}}^n\backslash(\t_0(\xi)\cup\t_1(\xi))\neq\emptyset$.

By definition, for every ${\o}\in{\R_{>0}}^n\backslash ( \tau_0(\xi )\cup\t_1(\xi))$ the following set is non empty and bounded from above by Lemmas \ref{characterization_tau_0} and \ref{characterization_tau_1}:

$$A_{\o}:=\left\{ \l\in\R\mid \# \left(\Supp (\xi )\cap \left\{  u\in\R^n\mid u\cdot{\o}< \l\right\}\right)<\infty\right\}.$$

We fix a vector $\o\in {\R_{>0}}^n\backslash ( \tau_0(\xi )\cup\t_1(\xi))$ and we set 
$$\l_0:=\sup A_{\o}.$$
 
By applying a translation (i.e. by multiplying $\xi$ by a monomial) we may assume that $\l_0=0$.\\
By Lemma \ref{lemma_ref} 
$$\#\left(\Supp(\xi)\cap\{x\in\R^n\ \mid \ \o\cdot x<0\}\right)<\infty.$$
Therefore there exists a Laurent polynomial $p$ such that 
$$\Supp(\xi+p)\subset \{u\in\R^n \mid u\cdot{\o}\geq 0\}.$$
Let us set $\xi':=\xi+p$. Since $\o\notin \t_1(\xi)=\t_1(\xi')$ there exists a continuous positive order $\preceq$ refining $\leq_{\o}$ such that $\xi'={\xi'}_i^\preceq$ for some $i$, where the ${\xi'_i}^\preceq$ are the roots of the minimal polynomial of $\xi'$ in $\Sc$. In particular there exists a rational strongly convex cone $\t$ which is $\preceq$-non-negative and such that $\xi'\in\K[[\t]]_{x_1\cdots x_n}$. By Lemma \ref{initial_rational}, $t=\nu_\o(\xi')$ is well defined. \\
If $t>0$ then $\Supp(\xi')\subset \{u\in\R^n\mid u\cdot\o\geq t\}$ which contradicts the fact that $0=\sup A_{\o}$. Thus $t=0$. If 
$$\#\left(\Supp(\xi')\cap\{u\in\R^n\mid u\cdot\o=0\}\right)<\infty$$
there exists a Laurent polynomial $p'$ such that $\Supp(\xi'+p')\subset \{u\in\R^n\mid u\cdot\o>0\}$.
As before $t':=\nu_\o(\xi'+p')$ is well defined and $t'>0$. Thus 
$$\#\left(\Supp(\xi')\cap\{u\in\R^n\mid u\cdot\o<t'\}\right)<\infty$$
which contradicts again the fact that $0=\sup(A_{\o})$.\\
Hence we have 
$$\Supp(\xi+p)\subset \{u\in\R^n \mid u\cdot{\o}\geq 0\}$$
and
$$\#\left(\Supp(\xi+p)\cap\{u\in\R^n\mid u\cdot\o=0\}\right)=\infty.$$
This proves that
  $$\# \left(\Supp( \In_\omega (\xi +p) )\right)=\infty.$$
\end{proof}

\begin{remark}\label{independent}
If $\o=(\o_1,\ldots,\o_n)\in{\R_{>0}}^n\backslash(\t_0(\xi)\cup\t_1(\xi))$ then $\dim_\Q(\Q\o_1+\cdots+\Q\o_n)\leq n-1$. Indeed if the $\o_i$ were $\Q$-linearly independent any two different monomials $x^\a$ and $x^\b$ would have different valuations: $\nu_\o(x^\a)\neq\nu_\o(x^\b)$. In particular we would not have 
 $$\# \left(\Supp( \In_\omega (\xi +p) )\right)=\infty.$$
\end{remark}

\begin{corollary}\label{line}
	Let $\sigma$ be a strongly convex cone containing the first orthant, let  $\xi\in \K [[\sigma]]_{x_1\cdots x_n}\setminus \K[[x]]_{x_1\cdots x_n}$ be algebraic over $\K[[x]]$. There exists $\omega\in {\R_{> 0}}^n\backslash(\t_0(\xi)\cup\t_1(\xi)$, a Laurent polynomial $p\in \K [ x]_{x_1\cdots x_n}$, a point $\g\in\Z^n$ and a vector $v\in\Z^n$ such that $\# \left(\Supp(\In_\omega (\xi +p) )\right)$ is not finite and 
	$$\Supp ( \In_\omega (\xi+p) )\subset \{ \g + kv \mid k\in \Z_{\geq 0}\}.$$
\end{corollary}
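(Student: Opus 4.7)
The plan combines Theorem~\ref{main_thm} with an iterative face-reduction. Theorem~\ref{main_thm} produces a pair $(\omega_0,p_0)$ whose $\omega_0$-initial form has infinite support; perturbing $\omega_0$ along rational facet normals of the cone carrying $\xi+p_0$ then collapses this support onto a single ray.

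First, apply Theorem~\ref{main_thm} to obtain $\omega_0\in{\R_{>0}}^n\setminus(\t_0(\xi)\cup\t_1(\xi))$ and a Laurent polynomial $p_0$ with $\#\Supp(\In_{\omega_0}(\xi+p_0))=\infty$. Since $\omega_0\notin\t_1(\xi)$, some continuous positive order $\preceq$ refining $\leq_{\omega_0}$ satisfies $\xi+p_0\in\Sc$, and by Theorem~\ref{TeoremaAlgCerrado} this gives $\xi+p_0\in\K[[\t]]_{x_1\cdots x_n}$ for a strongly convex rational cone $\t$ with $\omega_0\in\t^\vee$. Writing $\Supp(\xi+p_0)\subset\g+\t$ and setting $F:=\t\cap\omega_0^\perp$, a rational face of $\t$, after absorbing finitely many lower-order monomials into $p_0$ we have $\Supp(\In_{\omega_0}(\xi+p_0))\subset\g+F\cap\Z^n$, and this set is infinite.

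Now induct on $d:=\dim F$. In the base case $d=1$, $F=\R_{\geq 0}v$ for a primitive $v\in\Z^n$ and the quadruple $(\omega_0,p_0,\g,v)$ satisfies all the requirements. For $d\geq 2$, the infinite set $S:=\Supp(\In_{\omega_0}(\xi+p_0))-\g$ decomposes across the relative interiors of the (finitely many) faces of $F$; by pigeonhole there is a face $F'\preceq F$ of minimum dimension with $\#(S\cap\Int(F')\cap\Z^n)=\infty$. In the favorable subcase $\dim F'<d$, pick a rational $\omega_1\in\t^\vee$ with $\omega_1^\perp\cap\t=F'$ (possible because $F'$ is a face of $\t$) and set $\omega_\epsilon:=\omega_0+\epsilon\omega_1$ for $\epsilon>0$ small. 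A direct minimization calculation using $\omega_\epsilon\cdot u=\omega_0\cdot u+\epsilon\,\omega_1\cdot u$ shows $\omega_\epsilon^\perp\cap\t=F'$ and $\Supp(\In_{\omega_\epsilon}(\xi+p_0))=\g+(S\cap F')$, still infinite, while Lemmas~\ref{characterization_tau_0} and~\ref{characterization_tau_1} ensure $\omega_\epsilon\in{\R_{>0}}^n\setminus(\t_0(\xi)\cup\t_1(\xi))$; invoking the inductive hypothesis at $(\omega_\epsilon,p_0,F')$ finishes this subcase.

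The main obstacle is the residual subcase $F'=F$, where $S$ is concentrated in the relative interior of $F$. It is handled via the algebraicity of $\xi$: the minimal polynomial $P(x,T)$ of $\xi+p_0$ has a weight-homogeneous initial part $\In_{\omega_0}P$ (assigning weight $\omega_{0,i}$ to $x_i$ and $\omega_0\cdot\g=\nu_{\omega_0}(\xi+p_0)$ to $T$) that vanishes at $T=\In_{\omega_0}(\xi+p_0)$. Setting $\zeta:=x^{-\g}\In_{\omega_0}(\xi+p_0)$, one checks that $\zeta$ is a Laurent series in the rank-$m$ lattice $\omega_0^\perp\cap\Z^n$, with $m<n$ by Remark~\ref{independent}, and that $\zeta$ is algebraic over the Laurent polynomial ring on this lattice. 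After a $\Z$-linear change of coordinates identifying this lattice with a standard one, we apply the statement of the corollary inductively on the number of variables to produce a primitive direction $v\in\omega_0^\perp\cap\Z^n$ together with an auxiliary perturbation $\omega_1$ in the reduced setting; lifting $\omega_1$ back and setting $\omega:=\omega_0+\epsilon\omega_1$ for small $\epsilon>0$ then yields the desired quadruple.
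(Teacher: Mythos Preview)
Your argument diverges from the paper's, and the residual subcase is where it has a genuine gap.

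The paper never iterates over faces. Instead it observes that $\tau_0(\xi)$ is an open convex cone, proper in ${\R_{>0}}^n$ by Corollary~\ref{proper_tau_0}, so its boundary in ${\R_{>0}}^n$ is nonempty. Via a continuity argument (radial projection from a sphere onto the boundary of a bounded convex set is continuous, and lines through the origin with $\Q$-linearly independent direction vector are dense in $\P^{n-1}$), it picks a boundary point $\omega$ lying on such a line. Then $\dim_\Q(\Q\omega_1+\cdots+\Q\omega_n)\geq n-1$, and Remark~\ref{independent} forces equality; equivalently $\omega^\perp\cap\Z^n$ already has rank $1$. Applying Theorem~\ref{main_thm} at this $\omega$ gives a $p$ with $\In_\omega(\xi+p)$ supported on a single lattice line, and one is done in one shot.

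Your favorable subcase is fine, but in the residual subcase you invoke the corollary inductively on $\zeta=x^{-\gamma}\In_{\omega_0}(\xi+p_0)$ in $m<n$ variables, and the hypothesis $\zeta\notin\K[[y]]_{y_1\cdots y_m}$ is never checked and can fail. The support of $\zeta$ lies in the strongly convex cone $F$; your $\Z$-linear coordinate change must place the first orthant inside the image of $F$ to meet the corollary's standing assumption, but nothing then prevents $\Supp(\zeta)$ from sitting inside a translate of that first orthant. In that situation the induction yields nothing. Even when the induction does apply, the lifting step is only a sketch: you need $\omega_0+\epsilon\omega_1\in\t^\vee$ to control $\In_\omega(\xi+p_0)$, yet $\omega_1$ is produced in the internal coordinates on $\omega_0^\perp$ and there is no reason its lift lands in $\t^\vee$; and you do not explain how the Laurent polynomial $p'$ returned by the inductive call on $\zeta$ combines with $p_0$ to produce a $p$ for which $\In_\omega(\xi+p)$, rather than $\In_{\omega'}(\zeta+p')$, is supported on a line. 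These are the substantive steps in that subcase, not bookkeeping.
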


\begin{proof}
Since $\xi\notin \K[[x]]_{x_1\cdots x_n}$ the cone $\t_0(\xi)$ is not equal to ${\R_{>0}}^n$ by Corollary \ref{proper_tau_0}, and it has full dimension by Lemma \ref{DualFuertConvDimMax}  since it contains $\int(\s^\vee)$ (see Lemma \ref{IntDeDualEnTau0}). So let $a=(a_1,\ldots,a_n)$ be a point of the boundary of $\t_0(\xi)$ in ${\R_{> 0}}^n$. 

Since both $\t_0(\xi )$ and $\t_1(\xi )$ are open, $a$ is in ${\R_{> 0}}^n\setminus \t_0(\xi )\cup \t_1(\xi )$. By Theorem \ref{main_thm} there exists 
	 a Laurent polynomial $p(x)\in \K [ x]_{x_1\cdots x_n}$ such that
	  $$\# \left(\Supp( \In_a (\xi +p) )\right)=\infty.$$

We are going to find $\omega\in{\R_{> 0}}^n\setminus \t_0(\xi )\cup \t_1(\xi )$ such that the set  $\Supp( \In_\o (\xi +p) )$ is contained in a line.

Let $B$ be the open ball centered in $a$ of radius $r:=\min\{a_i\}/2>0$. Then $C:=\t_0(\xi)\cap B$ is an open relatively compact convex subset of ${\R_{> 0}}^n$. Let $S^{n-1}_{2r}$ be the  sphere of radius $2r$ centered in $a$.\\
 We claim that the projection $\pi$ of $S^{n-1}_{2r}$ from its center onto the boundary of $C$ is continuous: \\
Indeed let $(u_n)_{n\in\Z_{\geq 0}}$ be a sequence of $S_{2r}^{n-1}$ that converges to $u\in S^{n-1}_{2r}$. Since $C$ is relatively compact its closure is compact and its boundary $\partial C$ is also compact and there exists a subsequence $(\pi(u_{\phi(n)}))_{n\in\Z_{\geq 0}}$ that converges to a vector $v\in\partial C$. Since $u_n\longrightarrow u$, the half-lines $L_{u_n}$ ending at $a$ and passing through $u_n$ converge to the half-line $L_u$ passing $u$ and ending at $a$. Since $\pi(u_{\phi(n)})\in L_{u_n}$ for every $n$ we have that $v\in L_u\cap\partial C$. But $L_u\cap \partial C=\{\pi(u)\}$ since $C$ is convex so $v=\pi(u)$. Thus the only limit point of $(\pi(u_{n}))_{n\in\Z_{\geq 0}}$ is $\pi(u)$ and $\partial C$ being compact the sequence $(\pi(u_n))_{n\in\Z_{\geq 0}}$ converges to $\pi(u)$.

In particular, since the set of lines generated by vectors with $\Q$-linearly independent coordinates is dense in $\P(\R^n)$,  there exists
a half-line  $L$, generated by a vector whose  coordinates are linearly independent over $\Q$, that intersects the boundary of $C$ in a point $\o=(\o_1,\ldots,\o_n)$ such that $\Vert a-\o\Vert <r$. Thus $\o$ will  not be on the boundary of $B$ so it is on the boundary of $\t_0(\xi)$. Since $\t_1(\xi)$ is open and disjoint from $\t_0(\xi)$ then $\o\in{\R_{>0}}^n\backslash(\t_0(\xi)\cup\t_1(\xi))$.\\
Since $L$ is generated by a vector with $\Q$-linearly independent coordinates we have that
\begin{equation}\label{independent2}\dim_\Q(\Q\o_1+\cdots+\Q\o_n)\geq n-1\end{equation}
and this inequality is in fact an equality by Remark \ref{independent}.
By Theorem \ref{main_thm} there exists 
	 a Laurent polynomial $p(x)\in \K [ x]_{x_1\cdots x_n}$ such that
	  $$\# \left(\Supp( \In_\omega (\xi +p) )\right)=\infty.$$
	  But the set of exponents $\a\in\Z^n$ such that
$$\a\cdot\o=\nu_\o(\xi+p)$$ is included in a line by \eqref{independent}. Such line has the form $\g+\R v$ for some $\g\in\Z^n$ and $v\in\Z^n$. If the coordinates of $v$ are assumed to be coprime then
$$(\g+\R v)\cap\Z^n=\g+\Z v.$$
This proves the corollary.

\end{proof}

\begin{remark}\label{TieneCoordenadaPositivayNegativa}
  Notice that, in the proof of Corollary \ref{line}, the vector $v\in\Z^n$ is in $\omega^\perp$ where $\omega\in {\R_{>0}}^n$. Then $v$ has at least one coordinate that is negative and one coordinate that is positive.
\end{remark}






\section{Gap theorem}\label{gap_thmm}

\begin{definition}
An algebraic power series is a power series $f(x)\in\K[[x]]$, such that
$$P(x,f(x))=0$$ for some non-zero polynomial $P(x,y)\in\K[x,y]$ where $y$ is a single indeterminate. The set of algebraic power series is a local subring of $\K[[x]]$ denoted by $\K\langle x\rangle$.\\

\end{definition}


\begin{proposition}\label{thm_alg_power}
Let $\xi=\sum_{\a}\xi_\a x^\a\in\K[[\s]]_{x_1\cdots x_n}\backslash \K[[x]]_{x_1\cdots x_n}$ be algebraic over $\K[[x]]$ where $\s$ is a strongly convex cone containing the first orthant and $\K$ is a characteristic zero field. Then there exist $\g\in \Z^n$, a strongly convex cone $\s'\subset\s$ containing the first orthant, a half-line $L\subset \s'$ and $v\in\Z^n$ such that
\begin{itemize}
\item[i)] $L={\R_{>0}}v$ is a vertex of $\s'$,
\item[ii)] If we set $\xi_{\g+L}=\sum_{\a\in\g+L}\xi_\a x^\a$ then $\xi_{\g+L}=x^\g F(x^v)$ where $F(T)\in\K\langle T\rangle\backslash\K[T]$ and $T$ is a single variable,
\item[iii)] $\xi\in\K[[\s']]_{x_1\cdots x_n}$.
\item[iv)] The vector $v$ has at least one positive and one negative coordinates.
\end{itemize}
\end{proposition}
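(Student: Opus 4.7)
The proof naturally breaks into three stages: extracting a one-dimensional rational direction from $\xi$ by Corollary \ref{line}, identifying an algebraic univariate power series along that direction, and then refining the cone $\sigma$.

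First, I would apply Corollary \ref{line} to $\xi$ to obtain $\omega \in {\R_{>0}}^n \setminus (\tau_0(\xi) \cup \tau_1(\xi))$, a Laurent polynomial $p \in \K[x]_{x_1\cdots x_n}$, a point $\gamma \in \Z^n$, and a primitive vector $v \in \Z^n$ such that $\#\Supp(\In_\omega(\xi+p)) = \infty$ and $\Supp(\In_\omega(\xi+p)) \subset \gamma + \Z_{\geq 0} v$; property (iv) is then Remark \ref{TieneCoordenadaPositivayNegativa}. Since $v \in \omega^\perp$ has both positive and negative coordinates, each slice $\{\alpha \in \Z^n_{\geq 0} \mid \omega\cdot\alpha = c\}$ is finite, so one can write $\In_\omega(\xi+p) = x^\gamma F_0(x^v)$ uniquely with $F_0 \in \K[[T]] \setminus \K[T]$. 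Subtracting the finitely many contributions of $p$ on the line $\gamma + L$ replaces $F_0$ by some $F := F_0 - G$ (with $G\in\K[T]$) satisfying $\xi_{\gamma+L} = x^\gamma F(x^v)$ and $F\notin\K[T]$.

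The heart of the proof is showing $F \in \K\langle T\rangle$. I would fix a minimal polynomial $Q(x,y) = \sum_\beta Q_\beta(x) y^\beta \in \K[[x]][y]$ of $\xi + p$ and consider the $\omega$-initial form of the relation $Q(x,\xi+p)=0$. By Remark \ref{independent} and the construction in Corollary \ref{line}, $\dim_\Q(\Q\omega_1+\cdots+\Q\omega_n)=n-1$ with rational line $\R v$, which forces each $\In_\omega(Q_\beta)$ to be a polynomial supported on a single $\Z v$-coset. Setting $\mu := \min_\beta(\nu_\omega(Q_\beta)+\beta\,\omega\cdot\gamma)$ and letting $B$ be the finite minimizing set, the $\omega$-initial equation
$$
\sum_{\beta \in B} \In_\omega(Q_\beta)(x)\cdot x^{\beta\gamma}\, F_0(x^v)^\beta = 0
$$
lives entirely on one translate of $\omega^\perp$, whose lattice points fill one $\Z v$-coset; dividing by a common monomial and substituting $T=x^v$ turns this into a nontrivial polynomial identity $\sum_\beta T^{k_\beta} R_\beta(T)\,F_0(T)^\beta=0$ with $R_\beta \in \K[T]$. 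Clearing any remaining powers of $T$ provides an annihilator in $\K[T][y]$, so $F_0$ and therefore $F$ belong to $\K\langle T\rangle$, proving (ii). Nonvanishing of the annihilator is the main delicate point; it follows from $Q\neq 0$ together with the fact that $\In_\omega(Q_\beta)$ is nonzero whenever $Q_\beta$ is, and the minimizing set $B$ is drawn from these indices.

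It remains to build $\sigma'$. From the proof of Theorem \ref{main_thm}, I would pick a continuous positive order $\preceq$ refining $\leq_\omega$ and a strongly convex $\preceq$-nonnegative rational cone $\t$ with $\xi+p\in\K[[\t]]_{x_1\cdots x_n}$; set $\t' := \t + {\R_{\geq 0}}^n$ (still $\preceq$-nonnegative and rational, hence strongly convex by Remark \ref{ElConoPosEsFuertConvexo}), and define $\sigma' := \sigma \cap \t'$. Both $\sigma$ and $\t'$ contain the first orthant, so $\sigma'$ is a strongly convex rational cone in $\sigma$ containing ${\R_{\geq 0}}^n$, and Lemma \ref{SiEstaContenidoEnDosDesplazadosDeCono} merges the two translate inclusions $\Supp(\xi+p)\subset\gamma_1+\sigma$ and $\Supp(\xi+p)\subset\gamma_2+\t'$ into $\Supp(\xi)\subset\gamma'+\sigma'$ for some $\gamma'$, giving (iii). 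For (i), $\omega$ is nonnegative on $\sigma'$ (as $\preceq$ refines $\leq_\omega$), so $\omega^\perp$ supports $\sigma'$; the face $\sigma'\cap\omega^\perp$ is rational, its integer points lie in $\Z v$, and strong convexity excludes $\R v$, leaving only $\{0\}$, $\R_{\geq 0}v$, or $\R_{\leq 0}v$. A limit argument on $(\gamma+kv-\gamma_0)/k$ as $k\to\infty$ shows $v\in\sigma\cap\t\subset\sigma'$, so $\sigma'\cap\omega^\perp=\R_{\geq 0}v$, a one-dimensional face of $\sigma'$ and thus a vertex, completing (i).
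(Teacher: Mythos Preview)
Your proposal is correct and follows essentially the same route as the paper: invoke Corollary~\ref{line} to get $\omega$, $p$, $\gamma$, and $v$ with $\dim_\Q(\sum\Q\omega_i)=n-1$; take $\omega$-initial forms in an algebraic relation for $\xi+p$ to extract a nontrivial polynomial identity in $T=x^v$ proving $F\in\K\langle T\rangle$; then use $\omega\notin\tau_1(\xi)$ to find a $\preceq$-non-negative rational cone carrying $\xi$, intersect with $\sigma$, and verify that $L=\R_{\geq 0}v$ is the face cut out by $\omega^\perp$. The differences are cosmetic: the paper arranges in advance that $p$ has no monomials on $\gamma+L$ (so no adjustment $F_0\mapsto F$ is needed), and it obtains $\sigma'$ directly from $\xi+p\in\mathscr S^\K_\preceq$ rather than by explicitly forming $\tau+\R_{\geq 0}^n$ as you do; your version is slightly more explicit about why $\sigma'$ contains the first orthant and why $v\in\sigma'$. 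One small quibble: the finiteness of the slices $\{\alpha\in\Z_{\geq 0}^n:\omega\cdot\alpha=c\}$ follows from $\omega\in\R_{>0}^n$, not from $v$ having mixed signs, and it is used for the $\In_\omega(Q_\beta)$ rather than for writing $\In_\omega(\xi+p)=x^\gamma F_0(x^v)$ (that representation comes straight from $\Supp(\In_\omega(\xi+p))\subset\gamma+\Z_{\geq 0}v$).
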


\begin{proof}
By Corollary \ref{line} there exist $\o\in{\R_{>0}}^n\backslash(\t_0(\xi)\cup\t_1(\xi))$, a Laurent polynomial $p$ and a vector $\g\in\Z^n$ such that $\Supp(\xi+p)\subset \g+\langle\o\rangle^\vee$ and $\Supp(\xi+p)\cap(\g+\langle\o\rangle^\vee)\subset \g+L$ is infinite where $L$ is a half-line ending at the origin. Moreover we may assume that 
$$\dim_{\Q}(\Q\o_1+\cdots+\Q\o_n)=n-1$$
as shown in the proof of Corollary \ref{line} (see \eqref{independent2}). We can also assume that none of the monomials of $p$ lie on $\g+L$. \\
 We have that $\o\notin \t_1(\xi)=\t_1(\xi+p)$ by Lemma \ref{characterization_tau_1}. Thus by Theorem \ref{TeoremaAlgCerrado} the support of $\xi+p$ is included in $\g'+\s'$ where $\g'$ is a vector of $\Z^n$ and $\s'$ is a rational strongly convex cone included in $\langle \o\rangle^\vee$. Since $\dim_{\Q}(\Q\o_1+\cdots+\Q\o_n)=n-1$, $\s'\cap\langle\o\rangle^\vee=L$ and $L$ is a vertex of $\s'$. Thus by Lemma \ref{SiEstaContenidoEnDosDesplazadosDeCono} the support of $\xi$ is included in $\g''+\s'$ for some $\g''\in\Z^n$, hence $\xi\in\K[[\s']]_{x_1\cdots x_n}$. Lemma \ref{SiEstaContenidoEnDosDesplazadosDeCono} allows us to replace $\s'$ by $\s'\cap\s$, thus we may assume that $\s'\subset\s$.\\
 By Lemma \ref{LaUniondeConosTrasladadosEstaEnUnconoTrasladado} we have that $\xi\in\K[[\s']]_{x_1\cdots x_n}$ if and only if $\xi+p\in\K[[\s']]_{x_1\cdots x_n}$. Moreover $\xi$ is algebraic over $\K[[x]]$ if and only if $\xi+p$ is algebraic over $\K[[x]]$. This allows us to replace $\xi$ by $\xi+p$ in the rest of the proof.

Let $v=(v_1,\ldots,v_n)\in\Z^n$ be such that $\R_{\geq 0} v=L$. We may assume that the $v_i$ are globally coprime. Moreover by Remark \ref{TieneCoordenadaPositivayNegativa} we may assume that $v$ has at least one positive and one negative coordinates. In this case we have the following lemma:

\begin{lemma}\label{lemma_ini}
Let $g$ be a non-zero Laurent polynomial (resp. power series) whose support is included in $\g_0+L$ for some $\g_0\in\Z^n$, and let $v=(v_1,\ldots,v_n)\in\Z^n$ be such that $\R_{\geq 0} v=L$ and such that the $v_i$ are globally coprime. Then there exists a one variable polynomial (resp. power series) $G(T)$ such that
$$g(x)=x^{\g_0}G(x^v).$$
\end{lemma}

\begin{proof}[Proof of Lemma \ref{lemma_ini}]
By assumption the support of $g/x^{\g_0}$ is included in $L$. Now if $cx^\a$ is a non-zero monomial of $g/x^{\g_0}$ then $\a=kv$ for some real number $k\geq 0$. Since the $\a_i$ are integers and the $v_i$ are globally coprime then $k\in\Z_{\geq 0}$. Hence we have $$cx^\a=G(x^v)$$ with $G(T)=cT^k$.
\end{proof}

Since $\xi$ is algebraic over $\K[[x]]$ there exist an integer $d$ and formal power series $a_0,\ldots, a_d\in\K[[x]]$ such that
$$a_d\xi^d+\cdots+a_1\xi+a_0=0.$$
Thus 
\begin{equation}\label{rel_alg}\sum_{i\in E} \In_{\o}(a_i)\In_{\o}(\xi)^i=0\end{equation}
where
$$E= \{i\in\{0,\ldots,n\}\ / \ \nu_{\o}(a_i\xi^i)=\min_j\nu_{\o}(a_j\xi^j)\}.$$

By Lemma \ref{lemma_ini} for every $i\in E$ there exist $\g_i\in\Z^n$ and a polynomial $P_i(T)\in\K[T]$
such that 
$$\In_{\o}(a_i)=x^{\g_i}P_i(x^v)$$
and (since $\Supp(\In_{\o}(\xi))\subset \g+ L$) there exists  a power series $F(T)\in\K[[T]]$ such that 
$$\In_{\o}(\xi)=x^{\g}F(x^v).$$
Thus Equation \eqref{rel_alg} yields the relation
\begin{equation}\label{eq_ini} \sum_{i\in E} x^{\g_i+\g} P_i(x^v)F(x^v)^i=0.\end{equation}

But for any monomial $(x^v)^k$ we have $\nu_{\o}((x^v)^k)=k(\o\cdot v)=0$ thus we have 
$$\nu_{\o}(x^{\g_i+\g})=\o\cdot(\g_i+\g)=\o\cdot(\g_j+\g)=\nu_\o(x^{\g_j+\g}) \ \ \forall i,j\in E$$
so
$$\nu_{\o}(x^{\g_i-\g_j})=0 \ \ \forall i,j\in E.$$
Hence for all $i,j\in E$, $\g_i-\g_j\in \R v=L\cup(-L)$. Let $i_0$ be an element of $E$ such that $\g_i-\g_{i_0}\in L$ for all $i\in E$ and for every $i\in E$ let $k_i$ be the integer such that $\g_i-\g_{i_0}=k_iv$. 
Thus Equation \eqref{eq_ini} gives the relation
$$\sum_{i\in E}x^{\g_i-\g_{i_0}}P_i(x^v)F(x^v)^i=\sum_{i\in E}(x^{v})^{k_i}P_i(x^v)F(x^v)^i=0.$$

or 
$$\sum_{i\in E}T^{k_i}P_i(T)F(T)^l=0.$$
This shows that $F(T)$ is an algebraic power series. But $F(T)$ is not a polynomial since $\g+L$ contains an infinite number of monomials of $\xi$ so $F(T)$ is the sum of an infinite number of monomials.
\end{proof}


We can now state the second main result of this work:

\begin{theorem}\label{gap_cor}
Let $\xi\in \K[[\s]]_{x_1\cdots x_n}\backslash \K[[x]]_{x_1\cdots x_n}$ be algebraic over $\K[[x]]$ where $\s$ is a strongly convex cone containing the first orthant and $\K$ is a characteristic zero field. Let $\o=(\o_1,\ldots,\o_n)\in\int(\s^\vee)$. We expand $\xi$ as
$$\xi=\sum_{i\in \Z_{\geq 0}}\xi_{k(i)}$$
where  
\begin{itemize}
\item[i)] for every $k \in \Gamma=\Z\o_1+\cdots+\Z\o_n$, $\xi_k$ is a (finite) sum of monomials of the form $cx^\a$ with $\o\cdot\a=k$,
\item[ii)] the sequence $k(i)$ is a strictly increasing sequence of elements of $\Gamma$,
\item[iii)] for every integer $i$, $\xi_{k(i)}\neq 0$.
\end{itemize}

 Then there exists a constant $C>0$ such that
 $$k(i+1)\leq k(i)+C\ \  \ \ \forall i\in\Z_{\geq 0}.$$
\end{theorem}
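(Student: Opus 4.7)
The plan is to reduce the multivariate gap theorem to the classical gap theorem for $D$-finite power series in one variable, using Proposition \ref{thm_alg_power} as the bridge. Applying that proposition to $\xi$ produces $\g\in\Z^n$, a strongly convex cone $\s'\subset\s$ containing the first orthant, and a vector $v\in\Z^n$ whose positive ray $L=\R_{\geq 0}v$ is a vertex of $\s'$, together with a non-polynomial algebraic power series $F(T)\in\K\langle T\rangle\setminus\K[T]$ such that the restriction of $\xi$ to the half-line $\g+L$ equals $x^\g F(x^v)$. In particular, the coefficient of $x^{\g+kv}$ in $\xi$ is the $k$-th Taylor coefficient of $F$, so $\g+kv\in\Supp(\xi)$ if and only if $k\in\Supp(F)\subset\Z_{\geq 0}$.

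Next I invoke the classical fact recalled in the introduction: an algebraic power series in one variable over a field of characteristic zero is $D$-finite, so its Taylor coefficients satisfy a linear recurrence with polynomial coefficients. The standard argument---any sufficiently long stretch of consecutive zero coefficients past all roots of the leading polynomial forces the series to be eventually zero---shows that, since $F\notin\K[T]$, there exists $C_0\in\Z_{>0}$ such that any two consecutive elements of $\Supp(F)$ differ by at most $C_0$.

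Finally I translate this into a bound on $(k(i))$. Because $\s'\subset\s$ one has $\int(\s^\vee)\subset\int((\s')^\vee)$, and since $v\in\s'\setminus\{0\}$ this forces $\o\cdot v>0$. Writing $k_0<k_1<\cdots$ for the elements of $\Supp(F)$ and setting $T_j:=\o\cdot\g+k_j(\o\cdot v)$, the sequence $(T_j)$ is an infinite subsequence of $(k(i))$ (every $T_j$ is the $\o$-level of a point in $\Supp(\xi)$) satisfying $T_{j+1}-T_j\leq C_0(\o\cdot v)=:C_1$. For any index $i$ with $k(i)\geq T_0$, choose the largest $j$ with $T_j\leq k(i)$; then $T_{j+1}=k(\ell)$ for some $\ell>i$, hence $k(i+1)\leq T_{j+1}\leq T_j+C_1\leq k(i)+C_1$. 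The finitely many $i$ with $k(i)<T_0$ contribute only a bounded jump, so taking $C:=\max(C_1,\,T_0-k(0))$ produces the desired bound.

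The only substantial step is the invocation of Proposition \ref{thm_alg_power}, which encapsulates the structural work of the paper (and in turn rests on Theorem \ref{main_thm}); once it is available, the rest is a clean combinatorial reduction to the univariate $D$-finite gap phenomenon, using only that $v$ pairs positively with $\o$ to embed an arithmetic progression of bounded $\o$-gaps inside the sequence $(k(i))$.
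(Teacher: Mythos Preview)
Your proof is correct and follows essentially the same route as the paper: apply Proposition \ref{thm_alg_power} to obtain the half-line $\g+L$ on which $\xi$ restricts to $x^\g F(x^v)$ with $F$ algebraic and non-polynomial, invoke $D$-finiteness in characteristic zero to bound the gaps in $\Supp(F)$, and then use $\o\cdot v>0$ (which you correctly derive from $\s'\subset\s$ and $v\in\s'\setminus\{0\}$) to transfer this to a bound on $k(i+1)-k(i)$. The paper's proof is identical in substance; you are simply more explicit about the final combinatorial step embedding the levels $T_j=\o\cdot\g+k_j(\o\cdot v)$ into the sequence $(k(i))$ and handling the finitely many initial indices, whereas the paper states the conclusion $k(i+1)-k(i)\leq 2(N+r)\,\o\cdot v$ directly.
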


\begin{proof}
Let us consider the half-line $L$ and the algebraic power series $F(T)$ of Proposition \ref{thm_alg_power}.
Since $\K$ is a characteristic zero field the algebraic power series $F(T)$ is a $D$-finite power series (see \cite{St} Theorem 2.1). Then again because $\text{char}(\K)=0$, if we set $F(T)=\sum_{m=0}^{\infty} a_mT^m$, there exist an integer $N$ and polynomials $Q_0(m),\ldots,Q_N(m)\in\K[m]$ such that (see \cite{St} Theorem 1.5)
$$Q_0(m)a_m+Q_1(m+1)a_{m+1}+\cdots+Q_N(m+N)a_{m+N}=0\ \ \ \forall m\geq 0.$$
In particular since $F(T)$ is not a polynomial, if we set 
$$r=\max\{|z|\ / Q_i(z)=0 \text{ for some }i\},$$
we cannot have
$$a_{m+1}=\cdots=a_{m+N}=0$$
for some integer $m> r$. Otherwise, by induction on $m$, we see that all the coefficients $a_m$ would be zero for $m>r$. In particular for every integer $m$  the set 
$$\{m,m+1,\ldots,m+2(N+r)\}$$
contains at least two different  elements $i$ and $j$ such that $a_i\neq 0$ and $a_j\neq 0$.

We have $\xi_{\g+L}=x^\g\sum_{m}a_m(x^v)^m$ and $\nu_\o(x^\g(a_m(x^v)^m))=\o\cdot\g+m\o\cdot v$ for every $m$ with $a_m\neq 0$. Then we have that
$$k(i+1)-k(i)\leq 2(N+r)\o\cdot v\ \ \ \ \forall i\in\Z_{\geq 0}.$$
This proves the theorem with $C:=2(N+r)\o\cdot v$.
\end{proof}


\begin{remark}\label{K[x]}
Let $\xi\in \K[[\s]]_{x_1\cdots x_n}\backslash \K[[x]]_{x_1\cdots x_n}$ be algebraic over $\K[x]$ where $\s$ is a strongly convex cone containing the first orthant and $\K$ is a characteristic zero field. One may prove more easily Theorem \ref{gap_cor} in this case. \\
Indeed one may find a bijective  linear map $\phi : \R^n\to\R^n$ with integer coefficients such that $\phi(\s)\subset {\R_{\geq 0}}^n$. Then one can show that this  map induces a monomial morphism $\tilde\phi : \K[[\s]]_{x_1\cdots x_n}\to\K[[x]]_{x_1\cdots x_n}$. Thus $\tilde\xi:=\tilde \phi(\xi)\in\K[[x]]_{x_1\cdots x_n}$ is algebraic over $\K[x]$ and it is not very difficult to check that the conclusion of Theorem \ref{gap_cor} is satisfied by $\xi$ if and only if it is satisfied by $\tilde\xi$. But here, since $\tilde\xi$ is algebraic over $\K[x]$, $\tilde \xi$ is D-finite (see \cite{Li}) and it is not too difficult to prove that the conclusion of Theorem \ref{gap_cor} is satisfied by a D-finite power series.
\end{remark}

\begin{example}
Let  $\xi:=\sum_{i=0}^{\infty}\left(\frac{x_2}{x_1}\right)^{i^2}$ and let us choose $\o_1=1$ and $\o_2=2$. Using the notations of Theorem \ref{gap_cor} we have that 
$$\xi=\sum_{i\in \Z_{\geq 0}}\xi_{k(i)}$$
 where $k(i)=i^2$ and $\xi_{i^2}=\left(\frac{x_2}{x_1}\right)^{i^2}$. Thus $\xi$ is not algebraic by Theorem \ref{gap_cor}.\\
%
\end{example}

 
\section{Diophantine approximation for Laurent series}
Theorem \ref{gap_cor} have some implications in term of diophantine approximation. Before giving these implications we need to introduce some background. \\
Every vector $\o\in\R_{>0}^n$ defines a monomial valuation on $\K(\!(x)\!)$ as follows:
 for a nonzero formal power series $f$ written as $f=\sum_{\a\in\Z_{\geq 0}^n}f_\a x^\a$ where $f_\a\in\K$ for every $\a$ we set
 $$\nu_\o(f)=\min\{\o\cdot\a \mid f_\a\neq 0\}$$
 and for any nonzero formal power series $f$ and $g$ we set
$$\nu_\o\left(\frac{f}{g}\right)=\nu_\o(f)-\nu_\o(g).$$
This valuation defines a non-archimedean norm, which makes $\K\( x\)$ a topological field, as follows:
$$\left| \frac{f}{g}\right|_\o=e^{-(\nu_\o(f)-\nu_\o(g))}$$
for every nonzero power series $f$ and $g$. But for $n\geq 2$ this topological field is not a complete field. It is possible to describe quite easily the completion of $\K\(x\)$ for such a topology. Its elements are the Laurent series $\sum_kf_k$ such that $f_k\in\K\(x\)$ for every integer $k$ and $\nu_\o(f_k)\longrightarrow +\infty$ when $k$ goes to infinity. In particular  a Laurent power series with support in $\g+\s$ for some strongly convex cone will be an element of the completion of $\K\(x\)$ for the topology induced by $\nu_\o$ as soon as $\o\cdot u>0$ for every $u\in\s\backslash\{0\}$. Then the following  theorem on Diophantine approximation provides a condition on the algebraicity of such an element of the completion:

\begin{theorem}\label{thm_diop_anc}\cite{Ro}\cite{II}\cite{Ro2}
Let $\K$ be a field of any characteristic. 
Let $\xi$ be in the completion of $\K\(x\)$ for the topology induced by $\nu_\o$. If $\xi\notin \K\(x\)$ is algebraic over $\K\(x\)$ then there exist two constants $C>0$ and $a\geq 1$ such that
\begin{equation}\label{diop_inequality}\left|\xi-\frac{f}{g}\right|_\o\geq C|g|_\o^a\ \ \ \forall f,g\in \K[[x]].\end{equation}
\end{theorem}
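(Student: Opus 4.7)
The plan is to adapt the classical Liouville Diophantine inequality to the non-archimedean absolute value $|\cdot|_\o$ on $K := \K\(x\)$. Let $P(T) = A_d(x)T^d + \cdots + A_0(x) \in \K[[x]][T]$ be a minimal polynomial of $\xi$ over $K$, obtained after clearing denominators so that all $A_i \in \K[[x]]$. Since $\xi \notin K$ we have $d = \deg P \geq 2$, and because $P$ is irreducible over $K$, none of the roots $\xi = \xi_1, \ldots, \xi_d$ of $P$ in $\overline{K}$ lies in $K$. In particular, for every $y = f/g \in K$ with $f, g \in \K[[x]]$ and $g \neq 0$, we have $P(y) \neq 0$, so
$$h := g^d P(f/g) = \sum_{i=0}^d A_i(x)\, f^i g^{d-i}$$
is a nonzero element of $\K[[x]]$.

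Factoring $P(T) = A_d(x)\prod_i (T - \xi_i)$ in $\overline{K}[T]$, we may assume $|\xi - f/g|_\o < \min_{i \geq 2}|\xi - \xi_i|_\o$, since the complementary case is trivial for a sufficiently small $C$. The ultrametric property then forces $|f/g - \xi_i|_\o = |\xi - \xi_i|_\o$ for every $i \geq 2$, giving
$$|P(f/g)|_\o = M \cdot |\xi - f/g|_\o, \qquad M := |P'(\xi)|_\o > 0.$$
Combining with $|P(f/g)|_\o = |h|_\o/|g|_\o^d$ yields the key identity
$$|\xi - f/g|_\o = \frac{|h|_\o}{M \cdot |g|_\o^d},$$
and the theorem reduces to establishing a lower bound of the form $|h|_\o \geq C'\cdot|g|_\o^{a+d}$ for some constants $C' > 0$ and $a \geq 1$.

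The main obstacle is this lower bound. In classical Liouville the discreteness of $\mathbb{Z}$ yields $|q^d P(p/q)| \geq 1$ for free, but $\K[[x]]$ is not $\nu_\o$-discrete, so genuine work is needed. My approach would be a contrapositive argument: were no such $C'$, $a$ to work, one would extract sequences $(f_n, g_n) \in \K[[x]]^2$ with $\nu_\o(\xi - f_n/g_n) - a\nu_\o(g_n) \to \infty$ for every $a$, forcing $\nu_\o(h_n)$ to grow super-linearly in $\nu_\o(g_n)$. Hensel's lemma in the Henselian completion $\hat K_\o$, applied to $P$ at the approximate root $f_n/g_n$, produces an exact root $\eta_n$ of $P$ in $\hat K_\o$ at distance $|P(f_n/g_n)|_\o/|P'(\xi)|_\o$ from $f_n/g_n$; by Krasner's lemma, $\eta_n = \xi$ once $n$ is large. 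The delicate final step is to turn this super-good $K$-rational approximation of $\xi$ into a contradiction with $\xi \notin K$, which one expects to accomplish via a strong Artin-type approximation argument in the Henselization of $(K, \nu_\o)$ (as in the works of Rond cited in the statement), upgrading the approximate roots $f_n/g_n$ of $P$ to exact roots of $P$ in $K$ --- none of which exist. Quantifying this contrapositive yields the explicit exponent $a$, expected to be of order $d$ modulo corrections from the $\nu_\o$-valuation of the discriminant of $P$ and the ramification of $K(\xi)/K$ over $\nu_\o$.
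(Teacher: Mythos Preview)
The paper does not prove this theorem at all: it is quoted from the references \cite{Ro}, \cite{II}, \cite{Ro2}, and immediately afterwards the authors explicitly warn that ``the proof of Theorem~\ref{thm_diop_anc} is completely different from the proof of Liouville's Theorem,'' precisely because $\K[[x]]$ sits inside the unit ball of $|\cdot|_\o$ rather than outside it. So there is no in-paper proof to compare against; what one can compare is your outline with the method of the cited references.

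Your first half, the Liouville reduction to a lower bound $|h|_\o\geq C'|g|_\o^{a+d}$ with $h=g^dP(f/g)\in\K[[x]]\setminus\{0\}$, is correct algebra but, as you yourself note, gives nothing by itself: a nonzero power series can have arbitrarily large $\nu_\o$-valuation. This part is a detour, and the paper's remark quoted above says as much.

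Your second half points in the right direction --- the cited proofs do rest on a quantitative (``linear'') Artin--Greenberg/Izumi-type approximation theorem --- but the argument as written has a genuine gap. The Hensel/Krasner step is circular: applying Hensel's lemma in the completion to the approximate root $f_n/g_n$ simply reproduces $\xi$, which you already knew lived there; it yields no contradiction. What is actually needed is the statement that for the fixed polynomial $P$ there exist constants $a,b$ such that \emph{any} approximate solution $y\in\K\(x\)$ with $\nu_\o(P(y))>a\,\nu_\o(\text{denominator of }y)+b$ can be upgraded to an \emph{exact} solution of $P$ in $\K\(x\)$ (not merely in the completion). This is the content of the linear Artin approximation/Izumi inequality in the cited papers, and it is a substantial theorem in its own right, not a consequence of ordinary Hensel. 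Your sketch invokes it by name but does not supply it, and without it the contrapositive does not close: super-good approximation by elements of $\K\(x\)$ is not, a priori, incompatible with $\xi\notin\K\(x\)$. In short, you have correctly located the key input but have not proved it, and the Liouville scaffolding around it can be discarded.
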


We can rewrite Inequality \eqref{diop_inequality} using the valuation $\nu_\o$ as follows: there exist two constants $a\geq 1$ and $b\geq 0$ (here $C=e^{-b}$) such that 
$$\nu_\o\left(\xi-\frac{f}{g}\right)\leq a\nu_\o(g)+b\ \ \ \forall f,g\in\K[[x]].$$

This result allows to show that some Laurent series with support in a strongly convex cone are not algebraic over $\K\(x\)$ as in the following example:

 \begin{example}\label{ex_tr}
Here $n=2$.
We set 
$$\s:=(-1,1)\R_{\geq 0}+(1,0)\R_{\geq 0}\subset \R^2.$$ This is a strongly convex cone of $\R^2$. Let  us consider the following Laurent series with support in $\s$:
$$\xi:=\sum_{i=0}^{\infty}\left(\frac{x_2}{x_1}\right)^{i!}.$$
Then  $\xi$ is an element of the completion of $\K\(x\)$ for the topology induced by $\nu_\o$ for any $\o=(\o_1,\o_2)\in{\R_{>0}}^2$ such that $\o_2>\o_1$. Moreover
$$\nu_{\o}\left(\xi-\sum_{i=0}^N\left(\frac{x_2}{x_1}\right)^{i!}\right)=(N+1)!(\o_2-\o_1)=\frac{\o_2-\o_1}{\o_1}(N+1) \nu_{\o}(x_1^{N!})\ \ \ \ \forall N\in\Z_{\geq 0}.$$ Thus there do not exist  constants $a$ and $b$ such that 
$$a\nu_{\o}(x_1^{N!})+b\geq \nu_{\o}\left(\xi-\sum_{i=0}^N\left(\frac{x_2}{x_1}\right)^{i!}\right)\ \ \forall N\in\Z_{\geq 0}.$$
Hence $\xi$ is not algebraic over $\K\(x\)$ by Theorem \ref{thm_diop_anc}.
\end{example}
Here the key argument is that Theorem \ref{thm_diop_anc} implies that the ratio of the valuations of two nonzero consecutive terms in the expansion  of a given algebraic Laurent series is bounded. So we remark that Theorem \ref{gap_cor} provides a stronger criterion of algebraicity in characteristic zero.

We can give an analogy with the problem of transcendence of real numbers. Here the analogue of $\K[[x]]$ (resp. $\K\(x\)$, resp. its completion for the topology induced by $\nu_\o$) is $\Z$ (resp. $\Q$, resp. the completion of $\Q$ for the usual topology induced by the absolute value, i.e. $\R$). The analogue of Theorem \ref{thm_diop_anc} is the classical Liouville's  Theorem which allows to prove exactly as done in Example \ref{ex_tr} that a real number as $\sum_{i=0}^{\infty}\frac{1}{2^{i!}}$ is transcendental (see \cite{La} for Liouville's Theorem).  \\
Let us stress the fact that this analogy has some limits. Indeed in Theorem \ref{thm_diop_anc} one cannot take $a=[\K\(x\)[\xi]:\K\(x\)]$ as in Liouville's  Theorem as seen in \cite{Ro}, while Roth's Theorem allows us to replace $a$ by any constant $a'>2$ in the classical case. In fact the main difference between the two situations is that $\Z^*$ is in the complement of the unit open ball of $\R$ while $\K[[x]]$ is included in a unit open ball in the completion of $\K\(x\)$. In particular the proof of Theorem \ref{thm_diop_anc} is completely different from the proof of Liouville's  Theorem. Indeed the key fact of the proof of Liouville's Theorem is that $\Z^*$ is in the complement of the unit open ball of $\R$. The fact that the Diophantine approximation Liouville's Theorem holds in the setting of power series in spite of this main difference is a striking fact and a strong motivation for a deeper investigation of this mysterious analogy.  
\\
\begin{remark}
One can push further this analogy and remark that  it is well known that Liouville's  Theorem cannot be used to prove that some numbers as $\sum_{i=0}^{\infty}\frac{1}{2^{i^2}}$ are transcendental since they are not well enough approximated by rationals. In fact the transcendence of this real number has been proven relatively recently by Y. Nesterenko (see \cite{Ne1} or \cite{Ne2}) while it has been an open problem for more than one century.  In fact the original Nesterenko theorem concerns the algebraic independence of 4 modular functions when they are evaluated at points in the open unit disc. One corollary of this fact is that the series $\sum_i \frac{1}{q^{i^2}}$ is transcendental over the rational numbers for every algebraic complex number $q$ in the open unit disc. This is an example of a transcendental real number whose sequence of truncations in basis $q$ for some integer $q$ does not converge too quickly.\\
So Theorem \ref{gap_cor} can be seen as a kind of non-archimedean analogue of Nesterenko's Theorem. In fact it is more general than this Nesterenko's Theorem since it applies to any $\xi$ whose sequence of truncations does not converge too quickly to $\xi$. \\
\end{remark}
Let us remark that in Theorem \ref{thm_diop_anc} the inequality remains true if we replace $a$ by a greater constant. But the smaller is $a$, worse $\xi$ is approximated by elements of $\K\(x\)$.\\
Using Theorem \ref{gap_cor} we can prove the following Diophantine approximation type result asserting that for a given algebraic element $\xi$, there exists a well chosen norm $|\cdot|_{\nu_\o}$, such that the constant $a$ of Theorem \ref{thm_diop_anc} can be chosen equal to 1 (in the case where $\xi$ is approximated by elements of the form $\frac{g}{x^\b}$ with $g\in\K[[x]]$ and $\b\in{\Z_{\geq 0}}^n$) with this norm. This means that $\xi$ is very badly approximated by elements of the form $\frac{g}{x^\b}$ for this particular norm.
%

\begin{theorem}\label{diop_cor}
Let $\xi\in\K[[\s]]_{x_1\cdots x_n}\backslash \K[[x]]_{x_1\cdots x_n}$ be algebraic over $\K[[x]]$ where $\s$ is a strongly convex cone containing the first orthant and $\K$ is a characteristic zero field. Then there exist a strongly convex cone $\s'\subset \s$ containing the first orthant with $\xi\in\K[[\s']]_{x_1\cdots x_n}$, $\o\in\int({\s'}^\vee)$ and a constant $b\in\R$ such that for all $g\in\K[[x]]$ and all $\b\in{\Z_{\geq0}}^n$ we have that
\begin{equation}\label{diop}\nu_{\o}\left(\xi-\frac{g}{x^\b}\right)\leq  \o\cdot \b+b\end{equation}
or equivalently
\begin{equation}\label{diop'}\left|\xi-\frac{g}{x^\b}\right|_{\nu_{\o}}\geq  C|x^\b|_{\nu_{\o}}\end{equation}
where $C=e^{-b}$.
\end{theorem}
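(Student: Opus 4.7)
The plan combines Proposition \ref{thm_alg_power} with the $D$-finite nature of one-variable algebraic power series. The target inequality $\nu_\o(\xi - g/x^\b) \leq \o \cdot \b + b$ is equivalent to exhibiting some $\a \in \Supp(\xi - g/x^\b)$ with $\o \cdot \a \leq \o \cdot \b + b$. The crucial observation is that $\Supp(g/x^\b) \subset -\b + \Z_{\geq 0}^n$, so any $\a \in \Supp(\xi)$ lying outside $-\b + \Z_{\geq 0}^n$ automatically belongs to $\Supp(\xi - g/x^\b)$, regardless of $g$. My task thus reduces to producing such a universal witness, located on the extremal ray of Proposition \ref{thm_alg_power}, whose $\o$-valuation is controlled by $\o \cdot \b$ plus a constant.

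I first apply Proposition \ref{thm_alg_power} to obtain $\g \in \Z^n$, a strongly convex cone $\s' \subset \s$ containing the first orthant with $\xi \in \K[[\s']]_{x_1 \cdots x_n}$, a vertex $L = \R_{\geq 0} v$ of $\s'$ with $v \in \Z^n$ having a negative coordinate $v_{j_0}$, and an algebraic one-variable series $F(T) = \sum_m a_m T^m \in \K\langle T \rangle \setminus \K[T]$ with $\xi_{\g+L} = x^\g F(x^v)$. Since $\mathrm{char}(\K) = 0$, $F$ is $D$-finite by Stanley's theorem (\cite{St}, Theorem 2.1), so, exactly as in the proof of Theorem \ref{gap_cor}, there exist $N \in \Z_{>0}$ and $r \in \R$ such that every integer interval of length $N$ past $r$ contains some $k$ with $a_k \neq 0$.

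The technical heart of the argument is to choose $\o \in \int(\s'^\vee)$ subject to the compatibility condition
$$\o \cdot v \;\leq\; \o_{j_0}\,|v_{j_0}|. \qquad (\star)$$
Because $L$ is a vertex of $\s'$, the set $F_v := v^\perp \cap \s'^\vee$ is an $(n{-}1)$-dimensional facet of $\s'^\vee$, and since $v$ has at least two non-zero coordinates, $v^\perp \not\subset e_{j_0}^\perp$; hence $F_v$ contains some $\o^{(0)}$ with $\o^{(0)}_{j_0} > 0$ (automatically $\o^{(0)}_j \geq 0$ for all $j$ since $\s'^\vee \subset \R_{\geq 0}^n$). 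A perturbation $\o := \o^{(0)} + \varepsilon\, \o^{(1)}$ with $\o^{(1)} \in \int(\s'^\vee)$ and $\varepsilon > 0$ small lies in $\int(\s'^\vee)$ and, since $\o^{(0)} \cdot v = 0 < \o^{(0)}_{j_0}\,|v_{j_0}|$, satisfies $(\star)$ by continuity. This is the main obstacle; once resolved, the Diophantine bound follows by a direct calculation.

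To conclude, for each $\b$, I set $K^*(\b) := (\g_{j_0} + \b_{j_0})/|v_{j_0}|$: then $k > K^*(\b)$ forces $(\g + kv)_{j_0} < -\b_{j_0}$, so $\g + kv \notin -\b + \Z_{\geq 0}^n$. The $D$-finite property furnishes $k$ with $a_k \neq 0$ and $k \leq \max(K^*(\b), r) + N + 1$, yielding a witness $\a = \g + kv \in \Supp(\xi - g/x^\b)$ with
$$\o \cdot \a \;\leq\; \o \cdot \g + \bigl(\max(K^*(\b), r) + N + 1\bigr)(\o \cdot v);$$
substituting the definition of $K^*(\b)$ and applying $(\star)$ via $\tfrac{\b_{j_0}}{|v_{j_0}|}(\o \cdot v) \leq \b_{j_0}\o_{j_0} \leq \o \cdot \b$ reduces the right-hand side to $\o \cdot \b + b$ for some constant $b$ depending only on $(\g, v, \o, N, r)$, as required.
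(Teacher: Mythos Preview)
Your argument is correct and follows the same overall strategy as the paper: apply Proposition~\ref{thm_alg_power} to obtain the extremal ray $L=\R_{\geq 0}v$ and the algebraic series $F$, use the $D$-finite gap bound for $F$, and choose $\omega$ so that $\omega\cdot v$ is controlled by $\omega_{j}|v_{j}|$ along a negative coordinate of $v$. However, your execution is noticeably more economical in two places. First, the paper reduces to the case $\Supp(\xi)\subset\gamma+L$ via the identity~\eqref{sum_diop} and then computes the exact supremum $\sup_g\nu_\omega(\xi x^\beta-g)$; you bypass both by observing that any $\alpha\in\Supp(\xi_{\gamma+L})\subset\Supp(\xi)$ lying outside $-\beta+\Z_{\geq 0}^n$ already witnesses the bound, so an upper estimate suffices. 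Second, because the paper tracks the \emph{smallest} $m$ with $\gamma+mv\notin-\beta+\Z_{\geq 0}^n$, the minimising negative coordinate depends on $\beta$, and hence the compatibility inequality must hold for \emph{every} index $j$ with $v_j<0$; this is the content of the separation Lemma~\ref{inequalities}. Your approach fixes a single index $j_0$ in advance and only needs $(\star)$ for that one index, which you obtain by the elementary perturbation $\omega=\omega^{(0)}+\varepsilon\,\omega^{(1)}$ off the facet $v^\perp\cap\sigma'^\vee$. The price is a slightly larger constant $b$ (you use a possibly non-minimal $k$), but the proof is shorter and avoids Lemma~\ref{inequalities} entirely.
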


\begin{proof}

 Let $\xi_1$, $\xi_2\in\K[[\s]]_{x_1\cdots x_n}$ be such that $\Supp(\xi_1)\cap\Supp(\xi_2)=\emptyset$.
Then we have for every $\b\in{\Z_{\geq0}}^n$
\begin{equation}\label{sum_diop}\sup_{g\in\K[[x]]}\nu_\o\left(\xi_1+\xi_2-\frac{g}{x^\b}\right)= \min\left\{\sup_{g_1\in\K[[x]]}\nu_\o\left(\xi_1-\frac{g_1}{x^\b}\right),\sup_{g_2\in\K[[x]]}\nu_\o\left(\xi_2-\frac{g_2}{x^\b}\right)\right\}.\end{equation}

Thus if  there exists a constant $b$  such that for all $g_1\in\K[[x]]$ and all $\b\in{\Z_{\geq0}}^n$ we have
$$\nu_\o\left(\xi_1-\frac{g_1}{x^\b}\right)\leq \o\cdot\b+b$$
then 
$$\nu_\o\left(\xi_1+\xi_2-\frac{g}{x^\b}\right)\leq \o\cdot\b+b\ \ \ \forall g\in\K[[x]],\ \forall \b\in{\Z_{\geq0}}^n.$$
 
Hence in order to prove Inequality \eqref{diop} we are allowed to replace $\xi$ by a power series $\xi_1\in\K[[\s]]_{x_1\cdots x_n}$ such that there exists $\xi_2\in\K[[\s]]_{x_1\cdots x_n}$ with $\Supp(\xi_1)\cap\Supp(\xi_2)=\emptyset$ and $\xi_1+\xi_2=\xi$. In particular, using the notations of Proposition \ref{thm_alg_power}, since $\xi=\xi_{\g+L}+\xi_2$  where $\Supp(\xi_2)\cap (\g+L)=\emptyset$, we can assume that $\xi=\xi_{\g+L}$. Moreover 
$\s'$ will denote the cone given in Proposition \ref{thm_alg_power}, so $L$ is a vertex of $\s'$. \\

So from now on we assume that $\Supp(\xi)\subset \g+L$. Thus, by Lemma  \ref{lemma_ini} we can write $\xi=x^{\g}F(x^v)$ for some algebraic power series $F(T)$ that is not a polynomial  and, by Remark \ref{TieneCoordenadaPositivayNegativa}, $v\in\Z^n$ is a direction vector of $L$ whose coordinates are coprime and it has at least one positive and one negative coordinate. \\

Let us write
$$F(T)=\sum_{m\in\Z_{\geq 0}}a_mT^m.$$ 
As discussed in the proof of Theorem \ref{gap_cor}, there exist natural numbers $C$ and $M$ such that, for every $m>M$, we have that
\begin{equation}\label{eq-D-finite}
\left\{ i\mid a_i\neq 0 , i\in\{ m,m+1,\ldots ,m +C\}\right\}\neq \emptyset.
\end{equation}

Take $\b\in{\Z_{\geq0}}^n$ and let $\o\in{\Z_{>0}}^n$ be such that $\o\cdot v>0$. Then 
\begin{equation}\label{diop3}\sup_{g\in\K[[x]]}\nu_\o\left(\xi x^\b-g\right)=\sup_{g\in\K[[x]]}\nu_\o\left(\sum_ma_mx^{mv+\b+\g}-g\right)=\nu_\o(a_{m_0}x^{m_0v+\b+\g})\end{equation}
where 
$$m_0=\min\{m\in\Z_{\geq 0}\ / \  a_m\neq 0\text{ and } mv+\b+\g\notin{\Z_{\geq0}}^n\}.$$
By permuting the coordinates of $v$, denoted by $v_1,\ldots,v_n$, we may assume that there exists an integer $k$ such that
 $v_i<0$ for $i\leq k$, $v_i\geq 0$ for $i>k$ and $v_n>0$. 
 
In this case  $mv+\b+\g\notin{\Z_{\geq0}}^n$ if and only if  at least one of the following three situations arises:
\begin{enumerate}
\item[(i)] for some $i>k$, $v_i>0$ and $mv_i+\b_i+\g_i<0$,
\item[(ii)] for some $i>k$, $v_i=0$ and $\b_i+\g_i<0$,
\item[(iii)] for some $i\leq k$,  $mv_i+\b_i+\g_{i}<0$.
\end{enumerate}
In Case (i) we have that $m<-\frac{\b_i+\g_i}{v_i}\leq -\frac{\g_i}{v_i}$, since $\b_i\geq 0$, and $\b_i<-\g_i$. In Case (ii) we have that $\b_i<-\g_i$.

 Case (iii) is equivalent to
$$m>\min_{1\leq i\leq k}\left\{\frac{\b_i+\g_{i}}{-v_i}\right\}.$$ 
Thus we have that
$$m_0\leq \max\{M_1,M_2,M_3\}\leq M_1+M_2+M_3$$
where $M_1=\max\left\{-\frac{\g_i}{v_i}\mid v_i>0 \text{ and } \g_i<0\right\}$, $M_2=\ord_T(F(T))$ and
$$M_3=\min\left\{m\in\Z_{\geq0}\ / \  a_m\neq 0\text{ and }m\geq\left\lfloor \min_{1\leq i\leq k}\left\{\frac{\b_i+\g_{i}}{-v_i}\right\}\right\rfloor +1\right\}$$

$$\leq \min\left\{m\in\Z_{\geq0}\ / \  a_m\neq 0\text{ and }m\geq\left\lfloor \min_{1\leq i\leq k\mid \b_i+\g_i>0}\left\{\frac{\b_i+\g_{i}}{-v_i}\right\}\right\rfloor +1\right\}$$
where $\lfloor c\rfloor$ denotes the integer part of a real number $c$.
Exactly as shown in the proof of Theorem \ref{gap_cor}, by \eqref{eq-D-finite} we have that
$$M_3\leq \lfloor A(\b)\rfloor +1+2C+M$$
where $$A(\b):=\min_{1\leq i\leq k\mid 
\b_i+\g_i>0}\left\{\frac{\b_i+\g_{i}}{-v_i}\right\}.$$

Then by \eqref{diop3} we have that
 $$\sup_{g\in\K[[x]]}\nu_\o\left(\xi x^\b-g\right)\leq (\lfloor A(\b)\rfloor +1+2C+M+M_1+M_2)\o\cdot v+\o\cdot(\b+\g).$$
 Let $i_0\leq k$ be such that $\displaystyle\min_{1\leq i\leq k\mid\b_i+\g_i>0}\left\{\frac{\b_i+\g_{i}}{-v_i}\right\}=\frac{\b_{i_0}+\g_{i_0}}{-v_{i_0}}$.  From now on  let us choose $\o\in\int(\s'^\vee)$ satisfying the conclusion of Lemma \ref{inequalities} given below (with $\t=\s'$). In particular we have that
 $$\lfloor A(\b)\rfloor \o\cdot v\leq \frac{\b_{i_0}+\g_{i_0}}{-v_{i_0}}\o\cdot v \leq \o_{i_0}(\b_{i_0}+\g_{i_0})$$
 since $\b_{i_0}+\g_{i_0}>0$.
 But since $\o\in \s'^\vee$ and ${\R_{\geq 0}}^n\subset \s'$ then $\o_i\geq 0$ for all $i$. Thus we have that
 $$\o_{i_0}(\b_{i_0}+\g_{i_0})\leq \o\cdot\b+\o_{i_0}\g_{i_0}.$$
 Hence we obtain
 $$\sup_{g\in\K[[x]]}\nu_\o\left(\xi x^\b-g\right)\leq 2\o\cdot\b+(1+2C+M+M_1+M_2)\o\cdot v+\o\cdot\g+\o_{i_0}\g_{i_0}.$$
 or
 $$\sup_{g\in\K[[x]]}\nu_\o\left(\xi-\frac{g}{x^\b}\right)\leq \o\cdot\b+(1+2C+M+M_1+M_2)\o\cdot v+\o\cdot\g+\o_{i_0}\g_{i_0}.$$
 This proves the corollary with $b:=(1+2C+M+M_1+M_2)\o\cdot v+\o\cdot\g+\o_{i_0}\g_{i_0}$.

\end{proof}

\begin{lemma}\label{inequalities}
Let $\t$ be a strongly convex rational  cone containing ${\R_{\geq 0}}^n$ and let $v\in\t$ be a vertex of $\t$. Let us assume that there exists $k<n$ with $v_i<0$ for $i\leq k$ and $v_i\geq 0$ for $i>k$.
Then there exists $\o\in\t^{\vee}$ such that
$$0< \o\cdot v< -\o_jv_j\ \ \ \ \forall j\leq k.$$
\end{lemma}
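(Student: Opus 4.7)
The plan is to exploit the cone-face duality between $\t$ and $\t^\vee$. Since $\t$ contains the first orthant it is full-dimensional, and being strongly convex, its dual $\t^\vee$ is also full-dimensional by Lemma \ref{DualFuertConvDimMax}. The ray $\R_{\geq 0}v$ is a one-dimensional face of $\t$, so by the standard face correspondence it dualizes to a facet $F := \t^\vee \cap v^\perp$ of $\t^\vee$ of dimension $n-1 \geq 1$, which in particular has non-empty relative interior.

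The key step is to pick $\o_0 \in \Int(F)$ and observe that all of its coordinates are strictly positive. Indeed, by the standard characterization of relative interiors of dual faces, $\o_0 \in \Int(F)$ if and only if $\{u \in \t \mid \o_0 \cdot u = 0\}$ coincides with the dual face $\R_{\geq 0}v$. Every standard basis vector $e^{(j)}$ lies in $\t$ (since ${\R_{\geq 0}}^n \subset \t$), but $e^{(j)} \notin \R_{\geq 0}v$ because $v$ has at least one strictly negative coordinate while $e^{(j)}$ has none. Hence $(\o_0)_j = \o_0 \cdot e^{(j)} > 0$ for every $j$, while $\o_0 \cdot v = 0$ by construction.

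To finish, I would pick any $\o_1 \in \int(\t^\vee)$, which exists because $\t^\vee$ is full-dimensional and which satisfies $\o_1 \cdot v > 0$ since $v \in \t \setminus \{0\}$. Set $\o := \o_0 + \e\,\o_1 \in \t^\vee$ for a parameter $\e > 0$ to be chosen small. Then $\o \cdot v = \e\,\o_1 \cdot v$ is strictly positive but can be made arbitrarily small. On the other hand, for each $j \leq k$, since $v_j < 0$ and $(\o_1)_j \geq 0$,
\[
-\o_j v_j = (\o_0)_j |v_j| + \e (\o_1)_j |v_j| \geq (\o_0)_j |v_j|,
\]
a strictly positive lower bound independent of $\e$. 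Choosing $\e$ so small that $\e\,\o_1 \cdot v < \min_{j \leq k} (\o_0)_j |v_j|$ yields $0 < \o \cdot v < -\o_j v_j$ for all $j \leq k$, as required.

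The only delicate point I anticipate is the second paragraph: the strict positivity of every coordinate of the chosen $\o_0$. This rests on the standard bijection between relative interiors of dual faces of a polyhedral cone together with the hypothesis ${\R_{\geq 0}}^n \subset \t$; the remainder of the argument is a routine perturbation.
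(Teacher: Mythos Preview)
Your proof is correct and takes a genuinely different route from the paper's. The paper proceeds by an explicit separation argument: it introduces the vectors $v^{(i)}=(v_1,\ldots,2v_i,\ldots,v_n)$ for $i\leq k$, shows that the cone $\t'$ they generate meets $\t$ only at the origin (this is where the vertex hypothesis is used), and then chooses $\o\in\int(\t^\vee)\cap\int(-\t'^\vee)$ separating the two cones; the inequality $\o\cdot v^{(i)}<0$ unpacks directly to $\o\cdot v<-\o_iv_i$.

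Your argument instead exploits the face lattice: the vertex hypothesis gives a facet $F=\t^\vee\cap v^\perp$ of $\t^\vee$, and a point $\o_0$ in its relative interior must pair strictly positively with every $e^{(j)}$ since $e^{(j)}\in\t\setminus\R_{\geq0}v$; a small push toward $\int(\t^\vee)$ then finishes. This is arguably more conceptual and avoids the auxiliary cone $\t'$, at the cost of invoking the dual-face/relative-interior correspondence, which the paper does not develop explicitly (though it is standard and available in the cited reference \cite{Fulton}). The paper's argument is more self-contained within the elementary cone lemmas already established. Both uses of the vertex hypothesis are essentially the same fact in disguise: that no $e^{(j)}$ lies on the ray through $v$ is what forces, in the paper's version, the decomposition $\nu=(\l_1v_1,\ldots,\l_kv_k,0,\ldots,0)+(\sum\l_i)v$ to collapse.
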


\begin{proof}
For $1\leq i\leq k$ let us denote
$$v^{(i)}:=(v_1,\ldots,v_{i-1},2v_i,v_{i+1},\ldots,v_n)$$
and let $\t'$ be the cone generated by $v^{(1)},\ldots,v^{(k)}$. We claim that
$$\t\cap\t'=\{0\}.$$
Indeed let $\nu\in \t\cap\t'$. Then we can write
$$\nu=\sum_{i=1}^k\l_iv^{(i)}$$
for some $\l_i\geq 0$. Thus we easily check that
$$\nu=(\l_1v_1,\l_2v_2,\ldots,\l_kv_k,0\ldots,0)+(\sum_{i=1}^k\l_i)v.$$
If $\sum_{i=1}^k\l_i\neq 0$ this implies that 
$$v=\frac{1}{\sum_{i=1}^k\l_i}\nu+\frac{1}{\sum_{i=1}^k\l_i}(-\l_1v_1,-\l_2v_2,\cdots,-\l_kv_k,0,\ldots,0)\in\t$$
is not a vertex of $\t$
since $\nu\in\t$ and the $v_i$ are negative for $1\leq i\leq k$. Thus $\sum_{i=1}^k\l_i=0$ and $\l_i=0$ for all $1\leq i\leq k$. This proves that $\nu=0$.\\

Let $\o\in\R^n$ such that $\o\in \Int (-\t'^\vee)$ and $\o\in \Int (\t^\vee)$, i.e. the hyperplane defined by $\o$ separates $\t$ and $\t'$. Then $\o\cdot\nu>0$ for all $\nu\in \t\backslash\{0\}$ and $\o\cdot v^{(i)}<0$ for all $1\leq i\leq k$. This proves the lemma.

\end{proof}

\begin{example}
Set $P(T)=x_1^2T^2-(x_1^2+x_2^2)$. If the $a_k\in\Q^*$ are the coefficients of the following Taylor expansion:
$$\sqrt{1+U}=1+a_1U+a_2U+\cdots+a_kU^k+\cdots$$
then the element
$$\xi:=1+a_1\frac{x_1}{x_2}+a_2\frac{x_1^2}{x_2^2}+\cdots+a_k\frac{x_1^k}{x_2^k}+\cdots\in\K[[\s]]$$
is a root of $P(T)$ where $\s$ is the cone generated by $(1,0)$, $(0,1)$ and $(1,-1)$. Let $\o\in\s^\vee$. Let $N\in\Z_{> 0}$. Then we have
$$\nu_\o\left(\xi-\left(1+\sum_{k=1}^Na_k\frac{x_1^k}{x_2^k}\right)\right)=(N+1)(\o_1-\o_2).$$
On the other hand we can write
$$1+\sum_{k=1}^Na_k\frac{x_1^k}{x_2^k}=\frac{f_N}{x_2^N}$$
for some polynomial $f_N\in\K[x_1,x_2]$. Thus
$$\nu_\o\left(\xi-\left(1+\sum_{k=1}^Na_k\frac{x_1^k}{x_2^k}\right)\right)\bigg/\nu_\o(x_2^N)=\frac{N+1}{N}\frac{\o_1-\o_2}{\o_2}\xrightarrow[N\to \infty]{} \frac{\o_1-\o_2}{\o_2}$$
and this last term is equal to 1 if and only if $\o_1=2\o_2$. This shows that there exists only one $\o\in\s^\vee$ (up to multiplication by a positive scalar) satisfying Theorem \ref{diop_cor}.\\

Now let us pick such a $\o$, for instance $\o=(2,1)$. We have that $\xi\in \K[[\t]]$ for any strongly convex cone $\t$ containing $\s$. For instance we can consider the cone $\t$ generated by $\s$ and $(-2,3)$. It is straightforward to check that $\t$ is strongly convex but $\o\notin \t^\vee$ since $\o\cdot(-2,3)=-1$. This shows that, in Theorem \ref{diop_cor}, we may need to replace $\s$ by a smaller cone $\s'$.

\end{example}

\begin{example}
Let $\K$ be a field of  characteristic $p>0$. We define
$$\xi:=\sum_{k\in\Z_{\geq 0}}x_1^{p^k}x_2^{-p^k}.$$
This is a power series whose support is in $\s=\{(i,j)\in\R^2\ / \  \ i\geq 0, i+j\geq 0\}$. 
This power series is obviously a root of
$$P(T):=T^p-T+\frac{x_1}{x_2}.$$
Thus $\xi\in \K[[\s]]_{x_1\cdots x_n}\backslash\K[[x]]_{x_1\cdots x_n}$ is algebraic over $\K[[x]]$ but it is straightforward to check that Theorem \ref{gap_cor} is not satisfied.\\

On the other hand for $\o=(\o_1,\o_2)\in \int (\s^\vee)$ (i.e $\o$ satisfies $\o_1>\o_2>0$) and $(\b_1,\b_2)\in{\Z_{\geq 0}}^2$ we have
$$\sup_{g\in\K[[x]]}\nu_{\o}\left(\xi x_1^{\b_1}x_2^{\b_2}-g\right)=\o\cdot(p^{k_0}+\b_1,-p^{k_0}+\b_2)$$
where $k_0=\min\{k\in\Z_{\geq 0}\ / \ -p^k+\b_2<0\}$. In particular we have that
$$-p^{k_0-1}+\b_2\geq 0.$$
Thus we have 
$$\sup_{g\in\K[[x]]}\nu_{\o}\left(\xi x_1^{\b_1}x_2^{\b_2}-g\right)=\o\cdot(p^{k_0},-p^{k_0})+\o\cdot\b = (\o_1-\o_2)p^k+\o\cdot\b$$
$$\leq (\o_1-\o_2)p\b_2+\o\cdot\b.$$

Hence for $\o=(p+1,p)$ we see that
$$\sup_{g\in\K[[x]]}\nu_{\o}\left(\xi-\frac{g}{x_1^{\b_1}x_2^{\b_2}}\right)\leq p\b_2=\o_2\b_2\leq \o\cdot\b.$$
This proves that Theorem  \ref{diop_cor} remains valid for $\xi$. So it is a natural open question to know if Theorem \ref{diop_cor} remains valid in general in positive characteristic.
\end{example}

\end{document}